\newtheorem{theorem}{Theorem}[section]
\newtheorem{lemma}[theorem]{Lemma}
\newtheorem{proposition}[theorem]{Proposition}
\newtheorem{corollary}[theorem]{Corollary}
\newtheorem{conjecture}[theorem]{Conjecture}
\newtheorem{problem}[theorem]{Problem}
\theoremstyle{definition}
\newtheorem{definition}[theorem]{Definition}
\theoremstyle{remark}
\newtheorem{claim}{Claim}
\let\al\mathbf
\let\var\mathcal
\let\clo\mathscr
\let\rel\mathbb
\let\tup\mathbf
\let\join\vee
\let\meet\wedge
\let\equals\approx
\let\Union\bigcup
\newcommand{\notni}{\not\mkern1mu\ni}
\newcommand{\idclo}[1]{\clo{#1}^{\text{id}}}
\newcommand{\freeclo}[1]{{#1^{\rel F}}}
\DeclareMathOperator{\Cg}{Cg}
\DeclareMathOperator{\Cross}{Cross}
\DeclareMathOperator{\HSP}{\mathsf{HSP}}
\author[J. Opr\v{s}al]{Jakub Opr\v{s}al}
\email{jakub.oprsal@tu-dresden.de}
\address{Department of Algebra, Faculty of Mathematics and Physics,
  Charles University in~Prague, Sokolovsk\'a 83, 186 75 Praha, Czech
  Republic, and
  Institut f\"ur Algebra, Technische Universit\"at Dresden,
  01062 Dresden, Germany}
\title[Taylor's modularity conjecture and related problems]
  {Taylor's modularity conjecture and related problems for~idempotent varieties}
\date{\today}
\keywords{Mal'cev condition, interpretability, congruence modularity, cube term}
\subjclass[2010]{Primary: 08B05; Secondary: 03C05, 08B10}
\thanks{Supported by Czech Science Foundation, GA\v{C}R 13-01832S; and Charles
  University in Prague, project GAUK-558313.  The author has also received
  funding from the European Research Council (Grant Agreement no.\ 681988,
  CSP-Infinity) and the National Science Centre Poland under grant no.\
  UMO-2014/13/B/ST6/01812}
\begin{document}

\maketitle

\begin{abstract}
We provide a~partial result on Taylor's modularity conjecture, and several
related problems. Namely, we show that the interpretability join of two
idempotent varieties that are not congruence modular is not congruence modular
either, and we prove an analogue for idempotent varieties with a~cube term.
Also, similar results are proved for linear varieties and the properties of
congruence modularity, having a~cube term, congruence $n$-permutability for
a~fixed $n$, and satisfying a~non-trivial congruence identity.
\end{abstract}

\section{Introduction}

An~\emph{interpretation} $\iota$ of a~variety $\var V$ in a~variety $\var W$ is
a~mapping that maps basic operations of $\var V$ to terms of $\var W$ of the
same arity such that for every algebra $\al A\in \var W$, the algebra $(A,
(\iota(f)^{\al A})_{f\in \sigma})$ (where $\sigma$ is the signature of $\var V$)
is an algebra in $\var V$.  We say that a~variety $\var V$ is interpretable in
a~variety $\var W$ if there exist an~interpretation of $\var V$ in $\var W$. The
\emph{lattice of interpretability types of varieties} (see
\cite{neumann74,garcia.taylor84}) is then constructed by quasi-ordering all
varieties by interpretability, and factoring out varieties that are
interpretable in each other. This gives a~partially ordered class such that
every set has a~join and a~meet.  The lattice of interpretability types of
varieties is a~suitable object for expressing properties of Mal'cev
conditions~(for a~formal definition see \cite{taylor73}): The varieties that
satisfy a~given Mal'cev condition form a~filter in this lattice, thus, e.g.\
implications among Mal'cev conditions translate into inclusions among the
corresponding filters. 

In this paper, we contribute to the line of research whose aim is to understand
which of the important Mal'cev conditions are indecomposable in the following
strong sense: if two sets of identities in disjoint languages together imply
the Mal'cev condition, then one of the sets already do. An equivalent
formulation using the interpretability lattice is especially simple: which of
the important Mal'cev conditions determine a~prime filter?
Some of the Mal'cev conditions with this property have been described in the
monograph by Garcia and Taylor \cite{garcia.taylor84}, e.g.\  having a~cyclic
term of given prime arity. Garcia and Taylor conjectured that the filter of
congruence permutable varieties and the filter of congruence modular varieties
are prime. For congruence permutability, this was confirmed by
Tschantz~\cite{tschantz96}. Unfortunately, this proof has never been published.
The congruence modular case is still open:

\begin{conjecture}[Taylor's modularity conjecture]
  The filter of congruence modular varieties is prime, that is, if $\var V$ and
  $\var W$ are two varieties such that $\var V \join \var W$ is congruence
  modular, then either $\var V$ or $\var W$ is congruence modular.
\end{conjecture}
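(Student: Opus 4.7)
The plan is to attack the contrapositive: assume that neither $\var V$ nor $\var W$ is congruence modular and derive that $\var V \join \var W$ is not congruence modular either. The natural tool is Day's characterization: a variety is congruence modular if and only if it admits, for some $n$, a sequence of quaternary terms $d_0,\ldots,d_n$ satisfying Day's identities. So one would fix algebras $\al A \in \var V$ and $\al B \in \var W$ whose congruence lattices each exhibit a pentagon, and try to build a single algebra in $\var V \join \var W$ whose congruence lattice still fails the modular law.

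First I would pass to the free algebra $\al F = \al F_{\var V \join \var W}(X)$ on a small generating set $X$ (five variables suffice for the pentagon configuration) and examine congruences on $\al F$ generated by the equational data coming from the non-modular witnesses in $\var V$ and $\var W$. The second step is the term-analysis: assuming that $\var V \join \var W$ admits Day terms $d_0,\ldots,d_n$, one must show that these terms can be ``pulled to one side,'' i.e.\ that from such a sequence one can extract Day terms lying entirely in the clone of $\var V$ or entirely in the clone of $\var W$. This is where the normal-form theorem for terms of an interpretability join is used: every term is a tree of alternating $\var V$-symbols and $\var W$-symbols, and the identities holding in $\var V \join \var W$ are exactly those derivable from the two theories separately, with no interaction. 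A careful inductive analysis of the derivation witnessing each Day identity should then let one collapse one of the two coloured layers, yielding Day terms in the summand.

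The step I expect to be the main obstacle, and the reason this conjecture remains open in general, is exactly the term-analysis. Unlike the case of a single idempotent Mal'cev condition (a cyclic or Taylor term), Day's condition involves a whole chain of terms linked by identities of the form $d_i(x,x,y,y)=d_{i+1}(x,x,y,y)$ and $d_i(x,y,y,z)=d_{i+1}(x,y,y,z)$, and the rewrites witnessing these equalities may cross the signature boundary many times in subtle ways. Without idempotence one must also contend with absorbing behaviour and constant subterms, which is why the paper's idempotent-variety restriction is such a natural setting: idempotence forces every term of $\var V \join \var W$ to preserve diagonals, drastically cutting down how the two signatures can interleave. Bridging from the idempotent case to the full conjecture would require either a much finer syntactic analysis of how Day sequences decompose across a free product of clones, or a wholly new semantic invariant of non-modularity that is visibly preserved under interpretability joins.
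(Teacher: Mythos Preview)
The statement you are addressing is a \emph{conjecture}; the paper does not prove it. What the paper proves are partial cases: both varieties idempotent (Theorem~\ref{thm:taylors-conjecture-introduction}), and each variety either idempotent or linear (Theorem~\ref{thm:overview-introduction}(i)). You yourself recognise that the full statement is open, so your proposal is best read as an outline of an approach rather than a proof; on that reading, the honest assessment is that the crucial step---``a careful inductive analysis of the derivation\ldots should then let one collapse one of the two coloured layers''---is precisely the part nobody knows how to carry out, and nothing in your sketch indicates how to do it. Your description of the join is correct (disjoint union of signatures, union of axioms), but derivations in the join may alternate freely between the two axiom sets, and that alternation is the whole difficulty; there is no known mechanism for extracting a single-colour Day chain from a two-coloured one.

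It is worth contrasting your syntactic plan with the paper's method for the idempotent case, because the latter sidesteps term analysis entirely. The paper works semantically: it shows (Corollary~\ref{cor:idempotent-special-pentagon}) that every non-modular idempotent variety contains an algebra compatible with a \emph{very special pentagon}---a relational structure $(P;\alpha,\beta,\gamma)$ of three equivalences witnessing failure of the modular law, of a tightly controlled shape. A Tarski-style blow-up (Proposition~\ref{prop:huge-pentagons}) then shows that such a variety contains an algebra compatible with the fixed pentagon $\rel P_\kappa$ for every sufficiently large cardinal $\kappa$. Given two non-modular idempotent varieties, one picks $\kappa$ large enough for both, obtains algebras $\al A\in\var V$ and $\al B\in\var W$ on the \emph{same} underlying set compatible with the \emph{same} pentagon $\rel P_\kappa$, and simply superimposes their operations to get an algebra in $\var V\join\var W$ still compatible with $\rel P_\kappa$---hence non-modular. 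No term rewriting, no normal forms, no induction on derivations. The reason this does not settle the full conjecture is that the construction of the special pentagon in $\al F_{\var V}(x,y)^2$ (Lemma~\ref{lem:idempotent-modularity-blocker}) uses idempotence essentially; without it one does not know how to produce a pentagon of the required uniform shape.
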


In \cite{bentz.sequeira14}, Bentz and Sequeira proved that this is true if $\var
V$ and $\var W$ are idempotent varieties that can be defined by linear
identities (such varieties are called linear idempotent varieties), and later in
\cite{barto.oprsal.ea15}, Barto, Pinsker, and the author generalized their
result to linear varieties that do not need to be idempotent. In this paper we
generalize Bentz and Sequeira's result in a~different direction.

\begin{theorem} \label{thm:taylors-conjecture-introduction}
  If $\var V$, $\var W$ are two idempotent varieties such that $\var V \join
  \var W$ is congruence modular then either $\var V$ or $\var W$ is congruence
  modular.
\end{theorem}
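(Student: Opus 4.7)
The plan is to prove the contrapositive: assume $\var V$ and $\var W$ are idempotent and neither is congruence modular, and show that $\var V \join \var W$ is not congruence modular either. A natural starting point is Day's theorem, by which congruence modularity is equivalent to the existence of a Day sequence of four-ary idempotent terms $d_0,\dots,d_n$ satisfying a fixed list of identities. So suppose for contradiction that $\var V \join \var W$ has such a Day sequence; the witnessing terms live in the joint signature $\sigma_V \cup \sigma_W$ and the Day identities hold modulo the theory of both $\var V$ and $\var W$.

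The main technical step I would carry out is to work inside the free algebra $\mathbf{F}_{\var V \join \var W}(x,y,z,w)$ and study its elements via a normal form for terms in the joint signature. Idempotence of both theories allows one to collapse consecutive applications of operations from the same signature into a single application, so each element admits a representative whose tree strictly alternates layers of $\sigma_V$-operations and $\sigma_W$-operations. I would apply this normalization to the Day terms and then exploit each Day identity---each of which holds either modulo $\var V$ alone or modulo $\var W$ alone---to peel off the outermost layer, reducing the alternation depth step by step until the resulting Day sequence uses operations of only one of the two signatures. This would yield Day terms in that variety, contradicting its non-modularity.

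The hard part will be making the peeling argument work beyond the linear case handled by Bentz and Sequeira, where linearity of the identities almost uniquely determined the top-layer operation symbol of each side and made the reduction essentially automatic. In the fully idempotent setting, duplication of variables lets the two signatures intertwine in much richer ways, and the Day identities need not force any one outer layer to be immediately removable. I expect that the paper either replaces Day terms by a sharper characterization of congruence modularity (e.g.\ Gumm terms, a weak difference term, or a Kearnes--Kiss style chain) whose identities interact more favorably with the alternating-layer structure, or else constructs, from a non-modular witness in $\var V$ and a non-modular witness in $\var W$, an explicit algebra in $\var V \join \var W$ whose congruence lattice is non-modular. Either route hinges on a new combinatorial insight linking non-modularity on each side to non-modularity of the join, and that is where the essential difficulty of Theorem~\ref{thm:taylors-conjecture-introduction} will lie.
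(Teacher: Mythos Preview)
Your primary plan---normalizing Day terms in $\var V \join \var W$ into alternating $\sigma_V$/$\sigma_W$ layers and then peeling off the outermost layer---is not the route the paper takes, and there is no known way to make such a syntactic reduction succeed in the general idempotent case. The obstruction you yourself flag is genuine: once variable repetition is allowed, the Day identities do not pin down the outer layer of either side, and nothing forces the alternation depth to drop. So as a proof this part is a real gap, not just a hard step.

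Your second speculation is the one the paper actually carries out, and the missing ingredients are concrete. The argument is entirely semantic. First, one shows that every idempotent non-modular variety contains an algebra compatible with a \emph{very special pentagon}: a structure $\rel P = (A\times B;\alpha,\beta,\gamma)$ where $\alpha,\beta$ are the projection kernels and $\gamma$ is a third congruence with $\gamma^a \in \{0_B,1_B\}$ for each $a\in A$. This is produced inside $\al F\times \al F$ for the two-generated free algebra $\al F$, using a Zorn's-lemma argument to inflate the principal congruence $\Cg\{((x,x),(x,y))\}$ to a maximal ``modularity blocker'' and then factoring. The second, and decisive, point is that the pentagons obtained for $\var V$ and for $\var W$ live on different sets, so one cannot yet superimpose the two algebra structures. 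The paper resolves this with a transfinite enlargement (Tarski's construction): any variety compatible with some very special pentagon is compatible with the canonical pentagon $\rel P_\kappa$ on $\kappa\times\kappa$ for every infinite $\kappa$ at least the size of the original pentagon. Choosing a single $\kappa$ large enough for both varieties, one then places the $\var V$-algebra and the $\var W$-algebra on the same set $P_\kappa$ with the same three equivalences, and their common expansion is an algebra in $\var V\join\var W$ whose congruence lattice contains a pentagon. The idea you are missing is exactly this uniform family $\rel P_\kappa$ absorbing all non-modular idempotent varieties of bounded size, together with the trivial observation that ``contains an algebra compatible with a fixed relational structure'' is closed under interpretability joins.
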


Several similar partial results on primeness of some Mal'cev filters have been
obtained before. Bentz and Sequeira in \cite{bentz.sequeira14} also proved for
two linear idempotent varieties: if the join of the two varieties is congruence
$n$-permutable for some $n$, then so is one of the two varieties; and
similarly if their join satisfies a~non-trivial congruence identities, then so
does one of the two varieties. Stronger versions of these results also follow
from the work of Valeriote and Willard \cite{valeriote.willard14}, who proved
that every idempotent variety that is not $n$-permutable for any $n$ is
interpretable in the variety of distributive lattices, and the work of Kearnes
and Kiss \cite{kearnes.kiss13}, who proved that any idempotent variety which
does not satisfy a~non-trivial congruence identity is interpretable in the
variety of semilattices. Recently, a~similar result has been obtained by Kearnes
and Szendrei \cite{kearnes.szendrei15} for the filter of varieties having a~cube
term.

\begin{theorem} \label{thm:cube-terms-introduction}
  Suppose that $n\geq 2$, and let $\var V$ and $\var W$ be two idempotent
  varieties such that $\var V \join \var W$ has an~$n$-cube term.  Then so does
  either $\var V$ or~$\var W$.
\end{theorem}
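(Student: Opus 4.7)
I will prove the contrapositive: assuming that neither $\var V$ nor $\var W$ possesses an $n$-cube term, I will derive that $\var V \join \var W$ has none either. Recall that an $n$-cube term for an idempotent variety $\var U$ is a $(2^n-1)$-ary term $c$ together with $n$ identities $c(\bar u_i) \approx x$, $i = 1, \ldots, n$, where the rows $\bar u_i \in \{x,y\}^{2^n-1}$ are chosen so that the $n \times (2^n-1)$ matrix they form has, as columns, every nonzero vector of $\{x,y\}^n$ (reading $y$ as~$1$). Thus a witness for $\var V\join\var W$ having an $n$-cube term is a single term $c$ in the combined signature satisfying the $n$ identities modulo the identities of both $\var V$ and $\var W$.

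The plan is to proceed by induction on the complexity of such a witness. Fix a witness $c$ in $\var V\join\var W$ of minimum term-depth and regard it as a term tree whose internal nodes are labelled by basic operations of $\var V$ or $\var W$ and whose leaves are labelled by $x$ or $y$. Without loss of generality, the root $f$ of $c$ is an operation of $\var V$; write $c = f(d_1, \ldots, d_k)$. Substituting the cube-term rows $\bar u_i$ into $c$ yields $n$ identities of the form $f(d_1(\bar u_i), \ldots, d_k(\bar u_i)) \approx x$ in $\var V\join\var W$. The key step is to use the idempotency of $f$, together with the identities of $\var V$, to conclude that either the root operation $f$ — combined with suitably modified subterms extracted from the $\var W$-parts of the $d_j$ — can be assembled into an $n$-cube term for $\var V$ alone, or else $c$ admits a strictly simpler cube-term witness whose root lies in $\var W$, contradicting minimality. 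The latter case can then be handled by the symmetric argument applied to $\var W$.

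The technical heart is to track, for each $j \in \{1, \ldots, k\}$ and each $i \in \{1, \ldots, n\}$, the "pattern" produced by $d_j(\bar u_i)$ after quotienting by the $\var W$-identities; this yields a derived $n \times k$ matrix over $\{x, y\}$ (possibly with repetitions and all-$x$ columns). The success of the reduction hinges on showing that this matrix contains enough nontrivial columns so that $f$, applied to suitable projections of the $d_j$, realises an $n$-cube term for $\var V$. The machinery developed earlier in the paper for analysing terms in free algebras of a~join of idempotent varieties — the same machinery that drove the proof of Theorem~\ref{thm:taylors-conjecture-introduction} — is expected to supply the precise statement needed here.

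The main obstacle I expect lies in that matching step: the cube-term condition demands all $2^n-1$ nonzero column patterns, while the arity $k$ of the root $f$ may be much smaller, so $f$ cannot in general be promoted directly to an $n$-cube term for~$\var V$. The resolution should be not to use $f$ on the nose, but to compose $f$ with carefully chosen projection-like subterms so that the $(2^n-1)$-ary witness gets rebuilt from the inside, exploiting idempotency to pad out the missing columns. Making this precise uniformly, i.e.\ without falling back on finite-generation hypotheses of the cube-term-blocker theorems of Markovi\'c--Mar\'oti--McKenzie, is the subtle point; it is where I would expect the bulk of the work to go.
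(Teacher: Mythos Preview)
Your approach is syntactic --- induction on the depth of a putative cube-term witness --- but the proposal is more a plan than a proof, and the gap you yourself flag is real and, as stated, unbridged. The subterms $d_j$ of $c = f(d_1,\ldots,d_k)$ are themselves mixed terms in the combined signature, so ``quotienting by the $\var W$-identities'' to extract a pattern in $\{x,y\}$ is not well-defined: the elements $d_j(\bar u_i)$ need not reduce to $x$ or $y$ under $\var W$-identities alone, and the derivation of $c(\bar u_i) \equals x$ in $\var V \join \var W$ may interleave $\var V$- and $\var W$-steps arbitrarily deep inside the term tree, so there is no reason the root layer peels off cleanly. You also invoke ``machinery developed earlier in the paper for analysing terms in free algebras of a join'' --- no such machinery exists; the proof of Theorem~\ref{thm:taylors-conjecture-introduction} is not syntactic either.

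The paper's argument is entirely semantic and does precisely what you set aside as a fallback. By the Kearnes--Szendrei extension of the Markovi\'c--Mar\'oti--McKenzie blocker theorem to arbitrary idempotent varieties (Theorem~\ref{thm:kearnes.szendrei}), an idempotent variety without an $n$-cube term contains an algebra compatible with an $n$-cross (Corollary~\ref{cor:crosses-and-blockers}). Tarski's transfinite construction (Section~\ref{sec:tarski-construction}) then inflates any such cross to the fixed $n$-cross $\rel C_\kappa$ of any prescribed large-enough infinite cardinality~$\kappa$ (Proposition~\ref{prop:crosses-of-arbitrary-size}). Thus if neither $\var V$ nor $\var W$ has an $n$-cube term, both contain algebras compatible with the \emph{same} relational structure $\rel C_\kappa$; by Lemma~\ref{lem:joins} so does $\var V \join \var W$, and compatibility with an $n$-cross rules out an $n$-cube term (Lemma~\ref{lem:n-crosses-no-terms}). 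The whole point is never to manipulate mixed terms of the join at all.
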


We provide an alternative proof of this result using the fact (obtained in
\cite{kearnes.szendrei15} and recently also by \cite{mckenzie.moore17}) that
idempotent varieties that do not have a~cube term contain an algebra with a~cube
term blocker.

All of the filters mentioned so far share the following property: the varieties
from their complements have a~so-called strong coloring of their terms by
a~finite relational structure that depends on the particular filter. The precise
definition is given in Section~\ref{sec:3.2}. The notion is a~reformulation of
colorings described in \cite{barto.oprsal.ea15}, and a~generalization of
compatibility with projections introduced in \cite{sequeira01}. In the present
manuscript, we describe these coloring structures.
These characterizations by the means of colorings allow us to give analogous
results for linear varieties. Moreover, we are also able to connect these
results with their analogues for idempotent varieties (when those are available).

\begin{theorem} \label{thm:overview-introduction} \label{thm:1.4}
  Let $\var V$ and $\var W$ be two varieties such that each of them is either
  linear or idempotent.
  \begin{itemize}
  \item[(i)] If $\var V\join \var W$ is congruence modular, then so is either
    $\var V$ or $\var W$;
  \item[(ii)] if $\var V\join \var W$ is congruence $k$-permutable for some $k$,
    then so is either $\var V$ or $\var W$;
  \item[(iii)] if $\var V\join \var W$ satisfies a~non-trivial congruence
    identity, then so does either $\var V$ or $\var W$;
  \item[(iv)] for all $n\geq 2$: if $\var V \in \var W$ has an~$n$-cube term,
    then so does either $\var V$ or $\var W$.
  \end{itemize}
\end{theorem}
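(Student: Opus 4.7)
The plan is to give a uniform treatment of all four parts through the strong-coloring framework that Section~\ref{sec:3.2} sets up. For each of the Mal'cev conditions $M$ listed, I aim to identify a finite relational structure $\rel R_M$ such that a variety $\var V$ fails $M$ if and only if $\var V$ admits a strong coloring by $\rel R_M$; the flavour of coloring is the linear one if $\var V$ is treated as linear, and the idempotent one if $\var V$ is treated as idempotent. With such characterizations in hand, the theorem reduces to showing that admitting a coloring by $\rel R$ is preserved under interpretability joins, in either flavour and in the mixed linear/idempotent setting.

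First I would record the coloring characterizations. In the idempotent setting these collect existing results: for (i) congruence modularity, the coloring is essentially Sequeira's compatibility with projections from \cite{sequeira01}, recast in the form used to prove Theorem~\ref{thm:taylors-conjecture-introduction}; for (ii) $k$-permutability, the coloring structure comes from Valeriote--Willard's interpretability criterion via distributive lattices; for (iii), from Kearnes--Kiss's interpretability via semilattices; and for (iv), from the cube-term-blocker theorem used to prove Theorem~\ref{thm:cube-terms-introduction}. For linear varieties the parallel characterizations come from Bentz and Sequeira~\cite{bentz.sequeira14} and from \cite{barto.oprsal.ea15}.

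The main step is then a combination lemma: for every finite relational structure $\rel R$ and every pair $\var V, \var W$ of varieties each of which is linear or idempotent, if both admit a strong coloring by $\rel R$ (in the appropriate flavour) then so does $\var V \join \var W$. The homogeneous cases (both linear, both idempotent) either appear in the literature or are carried out by the same constructions that underlie Theorems~\ref{thm:taylors-conjecture-introduction} and~\ref{thm:cube-terms-introduction}: the join is built from the two clones in a sufficiently transparent way that a coloring on each summand assembles into one on the join. For the mixed case, I would show that a linear coloring and an idempotent coloring admit a common reformulation --- for instance, by verifying that a linear coloring of $\var V$ extends canonically to an idempotent coloring of its idempotent reduct --- so that the two colorings live in a single ambient framework in which the join can be coloured consistently.

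With the combination lemma in hand, each of the four parts follows by contraposition: if neither $\var V$ nor $\var W$ has property $M$, then both admit an $\rel R_M$-coloring, so does $\var V \join \var W$, and hence $\var V \join \var W$ also fails $M$. The principal obstacle I foresee is the mixed linear/idempotent case of the combination lemma, where the formal discrepancy between linear and idempotent colorings must be reconciled in a way compatible with the syntax of interpretability joins; the verifications of the individual coloring characterizations are, by contrast, either already in the literature or direct by-products of the proofs given earlier in the paper.
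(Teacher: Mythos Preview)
Your contrapositive outline is right, but the mechanism you propose --- a ``combination lemma'' at the level of strong colorings --- is not what the paper does, and it is where your plan breaks. A strong $\rel R$-coloring is an h1 clone homomorphism, and there is no evident way to assemble an h1 clone homomorphism on $\var V \join \var W$ from one on each summand: terms of the join are arbitrary nested compositions of $\var V$-terms and $\var W$-terms, and height-1 preservation does not propagate through such nesting. The paper sidesteps this entirely. Its unifying invariant is not ``has a strong $\rel R$-coloring'' but ``contains an algebra compatible with $\rel R$'' (equivalently, a genuine clone homomorphism into $\clo R$). For that invariant the combination lemma is the one-line Lemma~\ref{lem:joins}. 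For linear varieties the two invariants coincide by Lemma~\ref{lem:linear-varieties}; for idempotent varieties they do not, and the paper instead invokes separate structural theorems (the modularity-blocker argument of Corollary~\ref{cor:idempotent-special-pentagon}, Valeriote--Willard, Kearnes--Kiss, Kearnes--Szendrei) to produce a compatible algebra directly.

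A second gap concerns parts (i) and (iv). You assume a single \emph{finite} $\rel R_M$ will work, but in the idempotent case the compatible pentagon or $n$-cross one obtains can be arbitrarily large, and the one coming from $\var V$ need not match the one coming from $\var W$. The paper resolves this with Tarski's construction (Section~\ref{sec:tarski-construction}, Propositions~\ref{prop:huge-pentagons} and~\ref{prop:crosses-of-arbitrary-size}): both structures are blown up to a common $\rel P_\kappa$ or $\rel C_\kappa$, and only then does Lemma~\ref{lem:joins} apply. Parts (ii) and (iii) are easier precisely because there the compatible structure is a fixed two-element object and no size alignment is needed. Your proposed ``reconciliation of linear and idempotent colorings'' does not address this; the reconciliation the paper actually performs is at the level of compatible algebras, not colorings.
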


\begin{theorem} \label{thm:permutability-introduction} \label{thm:1.5}
  Let $n\geq 2$. If $\var V$ and $\var W$ are two linear varieties such that
  $\var V \join \var W$ is congruence $n$-permutable, then so is either $\var
  V$ or $\var W$.
\end{theorem}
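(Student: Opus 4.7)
The plan is to apply the strong-coloring framework sketched in the introduction, specialised to distinguish a~specific $n$ rather than merely ``some $n$'' as in Theorem~\ref{thm:1.4}(ii). Recall the Hagemann--Mitschke characterization: a~variety is congruence $n$-permutable exactly when it admits ternary terms $p_1,\dots,p_{n-1}$ with $p_1(x,y,y)\equals x$, $p_i(x,x,y)\equals p_{i+1}(x,y,y)$ for $1\le i<n-1$, and $p_{n-1}(x,x,y)\equals y$. I proceed by contrapositive: assume that neither $\var V$ nor $\var W$ admits such a~ladder of length $n$, and derive the same for~$\var V\join\var W$.

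The first task is to pin down a~finite relational structure $\rel A_n$ such that a~linear variety fails to be $n$-permutable if and only if its clone of term operations admits a~strong coloring by~$\rel A_n$. Whereas ``non-$k$-permutable for every $k$'' (Theorem~\ref{thm:1.4}(ii)) can be captured by a~very small gadget such as a~two-element chain, pinning down the precise bound $n$ requires a~structure rich enough to record the length of the longest available HM ladder. A~natural candidate is the directed path on $\{0,1,\dots,n\}$ with an~additional relation forbidding shortcuts, so that existence of a~coloring is equivalent to the non-existence of an~HM chain of length~$n$.

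The final step is to merge the colorings $c_{\var V}$ and $c_{\var W}$ witnessing the failure in each variety into one for $\var V\join\var W$. Since both varieties are linear, every term in the join admits a~canonical tree decomposition into maximal homogeneous layers (maximal runs of operations from a~single signature), and no non-trivial identity of $\var V\join\var W$ mixes the two signatures. This lets me define a~coloring on the join recursively: apply $c_{\var V}$ to a~$\var V$-layer, treating alien subterms as fresh variables whose colour is the one already computed for them from below, and symmetrically for $\var W$-layers. The main obstacle will be verifying that this recursion respects all identities of $\var V\join\var W$: one must show that replacing a~subterm by a~provably equal one yields the same colour. This should follow from a~normal-form theorem for linear joins, expressing $\var V\join\var W$ as the free superposition of the two linear theories, together with the interface conditions built into the definition of $\rel A_n$-strong coloring. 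Once this is established, the coloring characterization translates the combined coloring back to the non-existence of HM terms of length~$n$ in the join, closing the contrapositive.
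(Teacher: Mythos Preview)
Your first task---finding a finite relational structure whose strong colorability characterizes failure of $n$-permutability---is exactly what the paper does. The paper's structure is $\rel W_n=(\{0,\dots,n\};\alpha,\beta)$ with two \emph{equivalence} relations given by the partitions $01\mid 23\mid\cdots$ and $0\mid 12\mid 34\mid\cdots$; this is slightly different from your directed-path suggestion but plays the same role, and the characterization (Proposition~\ref{prop:coloring-for-permutability}) is proved via Schmidt's $(n{+}1)$-ary terms rather than Hagemann--Mitschke.

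The real divergence, and the gap, is in your merging step. Your assertion that ``no non-trivial identity of $\var V\join\var W$ mixes the two signatures'' is false as stated (e.g.\ if $f(x,x)\equals x$ holds in $\var V$ and $g(x,x)\equals x$ in $\var W$, then $f(g(x,x),g(x,x))\equals g(f(x,x),f(x,x))$ holds in the join), and the ``normal-form theorem for linear joins'' you invoke is neither stated nor proved. Showing that your recursive layer-by-layer coloring is well-defined on equivalence classes of terms is precisely the hard part, and you have not done it.

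The paper sidesteps this syntactic analysis entirely via one key lemma you are missing: for a \emph{linear} variety, having a strong $\rel B$-coloring is equivalent to \emph{containing an algebra compatible with $\rel B$} (Lemma~\ref{lem:linear-varieties}). Once each of $\var V$, $\var W$ contains an algebra on the set $W_n$ compatible with $\rel W_n$, the join does too---just put both algebra structures on the same underlying set (Lemma~\ref{lem:joins}). The resulting algebra in $\var V\join\var W$ witnesses failure of $n$-permutability directly, with no need to reason about terms of the join at all. Your recursive coloring, if it could be made rigorous, would essentially be computing the natural homomorphism from the free algebra of the join onto this overlaid algebra; the paper's route reaches the same destination without the detour.
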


However, we are not able to answer the following. (Note that the case $n=2$ has
been resolved in \cite{tschantz96} as well as Theorem~\ref{thm:1.4}(iv).)

\begin{problem} Given $n > 2$ and two idempotent varieties $\var V$ and $\var
W$ such that $\var V \join \var W$ is $n$-permutable. Is it always true that
either $\var V$ or $\var W$ is $n$-permutable?
\end{problem}
 \section{Varieties, clones, and relational structures}

Before we get to prove the results of this paper, we would like to recall
another constructions of a~class-size lattices that is equivalent to the
lattice of interpretability types of varieties. That is the lattice of
(homomorphism classes of) clones ordered by an existence of a~clone
homomorphism: We start with a~preorder on clones defined by the existence of a~clone
homomorphism between the two clones, and follow by factoring out the
homomorphically equivalent clones.  The equivalence of the two construction is
implicitly given by Birkhoff's theorem.
There is also a~third construction that lacks being completely equivalent to
the two: taking relational structures and pp-interpretability between them.
Those are connected to clones by the Galois correspondence between relations
and function clones on a~fixed (finite) set. Even though this construction
of the lattice of interpretability types lacks to be equivalent in general, we
will often describe some function clones as clones of polymorphisms of
a~relational structure.

\subsection{Function clones} \label{sec:2.1}
A~function clone (or just a clone) is a~set of operations $\clo A$ on a~fixed
set $A$ that contains projections and is closed under composition. We will
always use the same letter in italic font to denote the underlying set of the
clone.
By a~\emph{clone homomorphism} from a~function clone $\clo A$ to a~function
clone $\clo B$ we mean a~mapping $\xi\colon \clo A \to \clo B$ that preserves
composition and projections.

The correspondence between clones and varieties is given by the following: Any
clone $\clo A$ can be naturally understood as an~algebra $\al A = (A, (f)_{f\in
\clo A})$ having the signature $\clo A$; the corresponding variety is then the
variety generated by $\al A$. The same variety can be also understood as the
variety of all actions of $\clo A$ on sets (each action of $\clo A$ gives an
algebra in $\HSP \al A$). We will call this variety \emph{the variety of
actions of $\clo A$}.
For the other way, to obtain a~clone corresponding to a~variety $\var V$ we
take the clone of term functions of its countably-generated free algebra $\clo
F$, or in fact a~clone of term functions of any generator of $\var V$ (any two
such clones are isomorphic). When we refer to a~\emph{clone of a~variety} we
mean the function clone $\clo F$ above.

\subsection{Relational structures and pp-interpretations}
We say that a~relation $R$ is \emph{pp-definable} in the structure $\rel B$ if
there is a~primitive positive formula (using only conjunction and existential
quantifiers) $\psi$ such that $(b_1,\dots,b_k) \in R$ if and only if $\rel B
\models \psi(b_1,\dots,b_n)$, and we say that a~structure $\rel A$ is
\emph{pp-interpretable} in $\rel B$ if there is a~pp-definable relation $R$ and
a~surjective mapping $f\colon R \to A$ such that the kernel of $f$ and the
relations $f^{-1}(S^{\rel A})$, where $S$ is a~relation $S$ of $\rel A$, are
pp-definable in $\rel B$.

The following relation between pp-interpretations and clone homomorphisms for
finite underlying sets is a~consequence of the Galois correspondence between
function clones and pp-closed systems of relations (see e.g.\ \cite{geiger68})
and Birkhoff's HSP theorem.

\begin{theorem} \label{thm:structures-clones}
Let $\rel A$, $\rel B$ be finite relational structures and $\clo A$\!, $\clo B$,
respective, their polymorphism clones.
Then the following are equivalent.
\begin{itemize}
  \item[(i)] $\rel B$ is pp-interpretable in $\rel A$.
  \item[(ii)] There exists a~clone homomorphism from $\clo A$ into $\clo B$.
  \qed
\end{itemize}
\end{theorem}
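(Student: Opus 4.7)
The plan is to prove both directions using the classical Galois correspondence of Geiger (and independently Bodnarchuk--Kalu\v{z}nin--Kotov--Romov): on a~finite set $A$, a~relation is preserved by the clone $\clo A$ if and only if it is pp-definable from the invariant relations of $\clo A$. Applied to a~polymorphism clone, the relations pp-definable in $\rel A$ are exactly the relations on $A$ that $\clo A$ preserves. The second main ingredient is Birkhoff's HSP theorem.

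For (i)$\Rightarrow$(ii) I would start from a~pp-interpretation given by a~pp-definable $R \subseteq A^n$ and a~surjection $f\colon R \to B$. For $g \in \clo A$ of arity $k$, define $\xi(g) \in \clo B$ by $\xi(g)(b_1,\dots,b_k) = f(g(a_1,\dots,a_k))$, where $a_i \in f^{-1}(b_i)$ and $g$ acts coordinatewise on the $n$-tuples $a_i$. Well-definedness of $\xi(g)$ uses that $\ker f$ is pp-definable in $\rel A$, hence preserved by~$g$; the image stays in $R$ because $R$ itself is pp-definable, hence preserved by~$g$. That $\xi(g)$ preserves each basic relation $S^{\rel B}$ of~$\rel B$ is equivalent to $g$ preserving $f^{-1}(S^{\rel B})$, which again holds because $f^{-1}(S^{\rel B})$ is pp-definable in $\rel A$. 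Finally, preservation of projections and composition by~$\xi$ follows by direct inspection.

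For (ii)$\Rightarrow$(i) I would view the clones as algebras $\al A = (A, \clo A)$ and $\al B = (B, \clo B)$, and use $\xi$ to transport the signature of $\clo A$ to $B$, obtaining $\al B' = (B, (\xi(g))_{g\in\clo A})$ in the signature of $\clo A$. Because $\xi$ is a~clone homomorphism, every identity that holds in $\al A$ also holds in $\al B'$, hence $\al B' \in \HSP(\al A)$ by Birkhoff. Unpacking this, there is an~$n$, a~subalgebra $\al C$ of $\al A^n$, and a~surjective homomorphism $f\colon \al C \to \al B'$. The underlying set $C \subseteq A^n$, the congruence $\ker f$, and each preimage $f^{-1}(S^{\rel B})$ (for $S$ a~relation symbol of $\rel B$) are all relations on~$A$ invariant under~$\clo A$: for $\ker f$ and $f^{-1}(S^{\rel B})$ this uses that $f$ is a~$\clo A$-homomorphism onto $\al B'$ and that $\xi(g)$ preserves $S^{\rel B}$ for every $g \in \clo A$. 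By Geiger's theorem, all of these relations are pp-definable in $\rel A$, which is precisely the data of a~pp-interpretation of $\rel B$ in $\rel A$.

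The main obstacle is the second direction, specifically the verification that $f^{-1}(S^{\rel B})$ is $\clo A$-invariant as a~relation on $A^{n\cdot\text{ar}(S)}$: one has to check that coordinatewise application of $g \in \clo A$ inside $\al A^n$ commutes with $f$ into $\al B'$, i.e.~that $f(g(c_1,\dots,c_m)) = \xi(g)(f(c_1),\dots,f(c_m))$, which is exactly the homomorphism property with respect to the $\xi$-transported signature. Finiteness of $A$ enters only through Geiger's theorem in the last step; everything else is purely formal.
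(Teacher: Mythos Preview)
Your proposal is correct and follows precisely the approach the paper indicates: the paper does not give a proof but states the theorem with a~\qed, merely remarking beforehand that it ``is a~consequence of the Galois correspondence between function clones and pp-closed systems of relations (see e.g.~\cite{geiger68}) and Birkhoff's HSP theorem.'' Your write-up is a faithful unpacking of exactly that sentence, with Geiger used (only) in the direction (ii)$\Rightarrow$(i) and Birkhoff used to produce the subpower-and-quotient presentation of $\al B'$; the one detail worth making explicit is that $B$ being finite ensures $\al B'$ is finitely generated, so the power $\al A^n$ can be taken finite.
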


For infinite structures, there are a~few issues that can be resolved by
restricting to countable $\omega$-categorical structures and considering
uniformly continuous clone homomorphisms, or relaxing a~definition of relational
structure to allow infinitary relations and allow infinite `pp-definitions' (see
\cite{bodirsky.pinsker15} and \cite{romov77}).  Nevertheless, the implication
(i) $\to$ (ii) is valid in general, i.e., if the structures $\rel A$ and $\rel
B$ are infinite and not necessarily $\omega$-categorical.

\subsection{Notation}
We use letters in italic for underlying sets of clones, algebras, and relational
structures that are denoted by the same letters. That means the symbol $A$ is
used to denote an~underlying set of: an algebra $\al A$, a~clone $\clo A$, and
a~relational structure $\rel A$. We will keep this consistent, i.e., if we
denote two structures (algebraic and relational) by the same letter, they have
the same underlying set.
Moreover, $\clo A$ denotes the clone of polymorphisms of the relational
structure $\rel A$. We will also keep some consistence between algebras,
relational structures, and clones: all operations of an~algebra $\al A$ will be
compatible with relations of $\rel A$ and belong to the clone $\clo A$.

We also say that an algebra $\al A$ is \emph{compatible} with a~relational
structure $\rel B$, if the above is the case, that is, they share a~universe ($A
= B$) and each operation of $\al A$ is compatible with every relation of
$\rel B$.
 \section{Overview of the method}

Each of the Mal'cev filters given by one of the conditions mentioned in
Theorems~\ref{thm:1.4} and~\ref{thm:1.5} can be described as the class of
varieties that do not contain an algebra that has some compatible relations with
a~special property (e.g.\ congruence modular are those varieties that do not
contain an algebra having three congruences that do not satisfy the modular law,
finitely generated varieties with cube terms are those that do not contain an
algebra with `too many subpowers', etc.). In order to prove that these filters
are prime, we have to prove that for any two varieties $\var V$, $\var W$ that
contain such `ugly' algebras, also $\var V \join \var W$ contains such an
algebra. More precisely, Taylor's modularity conjecture in fact states that for
any two varieties $\var V$, $\var W$ that are not congruence modular, there
exists an algebra $\al A$ in $\var V \join \var W$ that has congruences
$\alpha$, $\beta$, and $\gamma$ that do not satisfy the modularity law. Such an
algebra has two natural reducts $\al A_1 \in \var V$, $\al A_2 \in \var W$ which
are obtained by taking only those basic operations of $\al A$ that belong to the
signature of the respective variety. Both these algebras share a~universe and
the three congruences $\alpha$, $\beta$, and $\gamma$. Therefore, Taylor's
conjecture states that for any two congruence non-modular varieties, there exist
a~set $A$ and  equivalence relations $\alpha$, $\beta$, $\gamma$ on $A$ such
that both varieties contain an algebra with the universe $A$ and congruences
$\alpha$, $\beta$, and $\gamma$.  In other words (see Lemma~\ref{lem:2.1}), both
varieties are interpretable in the variety of actions of the polymorphism clone
$\clo A$ of the relational structure $(A; \alpha, \beta, \gamma)$.
Although it would be convenient, we cannot find a~single relational structure
$\rel B$ such that a~variety is not congruence modular if and only if it is
interpretable in the variety of actions of~$\clo B$. This is due to the fact,
that congruence non-modular varieties can omit algebras of size smaller then any
fixed cardinal. Instead, we will find a~chain of clones $\clo P_\kappa$ indexed
by cardinals, that are polymorphism clones of relational structures of
increasing sizes, such that every non-modular idempotent variety is
interpretable in the variety of actions of $\clo P_\kappa$ for every big enough
cardinal $\kappa$. The same method is also applied for the other filters in the
interpretability lattice with the only difference being use of different
relational structures.

Contrary to the general case, in the case that a~variety $\var V$ is linear and
not congruence modular, we are able to prove that it is interpretable in the
variety of actions of the clone $\clo P_0$, that is defined as the clone of
polymorphisms of $\rel P_0 = (\{ 0,1,2,3 \}, \alpha, \beta, \gamma )$ where
$\alpha$, $\beta$, and $\gamma$ are equivalences defined by partitions $01|23$,
$03|12$, and $0|12|3$, respectively. This is due to the fact that non-modular
varieties can be described as those varieties whose terms are colorable by $\rel
P_0$ (see Proposition~\ref{prop:coloring-for-modularity}). This turns out to be,
in the case of linear varieties, equivalent to any of the conditions in
Lemma~\ref{lem:2.1} for the relational structure $\rel P_0$ (see
Lemma~\ref{lem:linear-varieties}). Moreover, we will also show that there is
a~clone homomorphism from $\clo P_0$ to $\clo P_\kappa$ for any infinite
cardinal $\kappa$, connecting the results for linear varieties and idempotent
varieties. 

The rest of this section is dedicated to describing several general results
to be used in the following sections.

\subsection{Varieties of actions of polymorphism clones}

Our method is based on the following lemma that describes the class of varieties
that are interpretable in the~variety of actions of the polymorphism clone of
a~relational structure. Here, we denote the signature of a~variety $\var V$ by
$\sigma(\var V)$.

\begin{lemma} \label{lem:2.1}
  Let $\var V$ be a~variety, $\clo F$ the clone of its countably generated free
  algebra, $\rel B$ be a~relational structure.  The
  following are equivalent:
  \begin{enumerate}
    \item There is a~clone homomorphism from $\clo F$ to $\clo B$;
    \item $\var V$ is interpretable in the variety of actions of $\clo B$;
    \item $\var V$ contains an algebra compatible with $\rel B$.
  \end{enumerate}
\end{lemma}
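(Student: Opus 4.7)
The plan is to prove the cycle $(3) \Rightarrow (1) \Rightarrow (2) \Rightarrow (3)$; the three conditions are three languages for the same phenomenon, namely that $\var V$ ``lives'' on the set~$B$ in a way compatible with~$\rel B$. I will use as the central tool the universal property of the clone $\clo F$ of the countably generated free algebra: for every algebra $\al A \in \var V$, the assignment sending each generator $f^{\clo F}$ (the realization in~$\clo F$ of a basic symbol $f \in \sigma(\var V)$) to $f^{\al A}$ extends uniquely to a clone homomorphism from~$\clo F$ to the clone of term operations of~$\al A$, because $\clo F$ is presented as the term clone of $\var V$ modulo the identities of~$\var V$.

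For $(3)\Rightarrow(1)$ I would take an algebra $\al A \in \var V$ with universe~$B$ whose basic operations are polymorphisms of~$\rel B$; then its entire term clone sits inside~$\clo B$, and composing the universal morphism above with the resulting inclusion gives the desired clone homomorphism $\clo F \to \clo B$. For $(1)\Rightarrow(2)$, given $\xi \colon \clo F \to \clo B$, I would define an interpretation $\iota$ of~$\var V$ in the variety of actions of~$\clo B$ by $\iota(f) := \xi(f^{\clo F})$, viewed as a basic (hence term) operation of that variety; since $\xi$ preserves composition and projections, every identity of~$\var V$ that holds in~$\clo F$ is transported to an identity holding in~$\clo B$, so the reduct $(D, (\iota(f)^{\al D})_{f \in \sigma(\var V)})$ of any algebra $\al D$ in the variety of actions of~$\clo B$ satisfies all identities of~$\var V$ and therefore belongs to~$\var V$. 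For $(2)\Rightarrow(3)$ I would apply the given interpretation to the tautological algebra $(B, \clo B)$ in the variety of actions of~$\clo B$: the resulting reduct is an algebra in~$\var V$ with universe~$B$ whose basic operations are the values $\iota(f)^{(B, \clo B)} \in \clo B$, hence polymorphisms of~$\rel B$.

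I do not expect any real obstacle; the lemma is a definitional unpacking that welds together interpretability, clone homomorphisms, and polymorphism-compatibility. The single step deserving a little care is $(1)\Rightarrow(2)$: one must exploit that the identities of $\var V$ are controlled precisely by~$\clo F$ (this is why~$\clo F$, rather than the term clone of some arbitrary algebra in~$\var V$, is the correct source of the homomorphism), so that the same interpretation produces an algebra in~$\var V$ on \emph{every} algebra of the variety of actions of~$\clo B$, not merely on the canonical one.
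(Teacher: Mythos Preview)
Your proposal is correct and follows essentially the same approach as the paper: the same three implications are proved (you cycle $(3)\Rightarrow(1)\Rightarrow(2)\Rightarrow(3)$ while the paper does $(1)\to(2)\to(3)\to(1)$), and each step is argued in the same way, with only cosmetic differences. The one small variation is in $(1)\Rightarrow(2)$: you verify directly that the reduct of \emph{every} algebra in the variety of actions of~$\clo B$ lies in~$\var V$, whereas the paper checks this only for the generating algebra $(B,\clo B)$ and then invokes that it generates the variety; both are equally valid.
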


\begin{proof}
  (1) $\to$ (2): Let $\al F$ denote the countable generated free algebra, and
  let $\xi\colon \clo F \to \clo B$ be a~clone homomorphism. We
  will use it to define an interpretation $\iota$ from $\var V$ to the variety
  of action of $\clo B$. Note that the signature of the variety of actions of
  $\clo B$ is the set $\clo B$. For a~$k$-ary symbol $f$ in the signature of
  $\var V$, we define
  \(
    \iota(f) = \xi(f^{\al F})
  \).
  We get that $(B; \iota(f)_{f\in \sigma(\var V)}) \in \var V$ since $\xi$ preserves
  identities, and consequently, since $(B; (f)_{f\in \clo B})$ generates the
  variety of actions of $\clo B$, $\iota$ is an interpretation.

  (2) $\to$ (3): Suppose that $\iota$ is an interpretation of $\var V$ in the
  variety $\var W$ of actions of $\clo B$.
  Note that, since $\clo B$ is closed under compositions, we can without loss of
  generality suppose that $\iota$ maps basic operations of $\var V$ to basic
  operations of $\var W$ (i.e., the elements of $\clo B$).
  From the definition of an interpretation, we have that for any action of $\clo
  B$ on a~set $C$, $(C,(\iota(f))_{f\in \sigma(\var V)})$ is an algebra in $\var
  V$ (here $\iota(f)$ is understood as the action of the~element $\iota (f) \in
  \clo B$). Considering in particular the natural action of $\clo B$ on $B$, we
  claim that $\al A = (B,(\iota(f))_{f\in \sigma(\var V)})$ has the required
  properties.  Indeed, it has the right universe, and moreover for any $f\in
  \sigma(\var V)$, the function $\iota(f)$ is a~polymorphism of~$\rel B$.

  (3) $\to$ (1): Let $\al A$ be the algebra satisfying the condition $(3)$, and
  let $\al F$ be the countably generated free algebra in $\var V$. Define
  $\xi\colon \clo F \to \clo B$ by putting $\xi(f^{\al F}) = f^{\al A}$ for
  every term $f$ of $\var V$. This mapping is well-defined since $\al A$
  satisfies all identities that are satisfied in $\al F$ and every term
  operation of $\al A$ is a~polymorphism of $\rel B$. It is also a~clone
  homomorphism since it preserves projections and composition.
\end{proof}

Also from the above, we get the following property of the interpretability join
of two varieties.

\begin{lemma} \label{lem:joins}
  Let $\rel B$ be a~relational structure, and let $\var V$ and $\var W$ be two
  varieties such that both contain an algebra compatible with $\rel B$. Then
  also the interpretability join $\var V \join \var W$ contains an algebra
  compatible with $\rel B$.
\end{lemma}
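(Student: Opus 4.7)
The plan is to reduce the statement to a routine property of joins in the interpretability lattice by using Lemma~\ref{lem:2.1} to translate between the three equivalent conditions there. Specifically, I would rephrase the hypothesis ``contains an algebra compatible with $\rel B$'' via the equivalence (2)$\leftrightarrow$(3) of Lemma~\ref{lem:2.1} into ``is interpretable in the variety of actions of $\clo B$'' (where $\clo B$ is the polymorphism clone of $\rel B$), and analogously rephrase the desired conclusion.

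Let $\var U$ denote the variety of actions of $\clo B$. The assumption is that both $\var V$ and $\var W$ contain an algebra compatible with $\rel B$, so by Lemma~\ref{lem:2.1} (condition (3) implies (2)), both $\var V$ and $\var W$ are interpretable in $\var U$. In other words, $\var U$ is an upper bound of $\{\var V, \var W\}$ in the interpretability quasi-order. Since $\var V \join \var W$ is the join of $\var V$ and $\var W$ in that order, $\var V \join \var W$ is interpretable in $\var U$ as well.

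Applying Lemma~\ref{lem:2.1} once more, now using the implication (2) implies (3), we conclude that $\var V \join \var W$ contains an algebra compatible with $\rel B$, which is what we wanted.

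There is no real obstacle here; the content is packaged entirely in Lemma~\ref{lem:2.1} together with the universal property of the join. The only thing worth being a little careful about is that the ``interpretability join'' in the statement refers to the join in the lattice of interpretability types, which by definition is the least variety (up to mutual interpretability) into which both $\var V$ and $\var W$ interpret; this is exactly the property invoked in the middle step above.
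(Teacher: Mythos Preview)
Your proof is correct and follows essentially the same route as the paper: both invoke Lemma~\ref{lem:2.1} to translate compatibility with $\rel B$ into interpretability in the variety of actions of $\clo B$, use the universal property of the join, and translate back. The paper additionally sketches a direct construction (putting the two algebra structures on $B$ side by side), but this is offered only as an alternative.
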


\begin{proof}
  From the previous lemma, we get that both $\var V$ and $\var W$ are
  interpretable in the variety of actions of $\clo B$, therefore also $\var V
  \join \var W$ is by the definition of interpretability join. Again by the
  previous lemma, this implies that $\var V\join \var W$ contain an algebra
  compatible with $\rel B$.

  Alternatively, we can construct such an algebra directly: Suppose that
  $\al C \in \var V$ and $\al D \in \var W$ are compatible with
  $\rel B$ (in particular $C = D = B$). The algebra $\al B$ in $\var V\join \var W$
  compatible with $\rel B$ is obtained by putting the structures of these two
  algebras on top of each other, i.e., as
  $\al B = (B; (f^{\al C})_{f\in \sigma(\var V)}, (g^{\al D})_{g\in \sigma(\var
  W)})$.
\end{proof}

\subsection{Colorings of terms by relational structures} \label{sec:3.2}

Usually we are unable to find a~single relational structure $\rel B$ such that
variety falls in the complement of a filter if and only if it is interpretable
in the variety of actions of $\clo B$.
Nevertheless, we can relax from taking~clone homomorphism, corresponding to
interpretation of varieties, to h1 clone homomorphisms. An h1 clone homomorphism
is a~mapping between two clones that preserves identities of height~1 in a
similar way as clone homomorphism preserves all identities
\cite[Definition 5.1]{barto.oprsal.ea15}.
Existence of an~h1 clone homomorphism from the clone of a~variety $\var V$ to the
polymorphism clone of a~relational structure $\rel B$ is described by an
existence of a~coloring described below.
As noted in \cite{barto.oprsal.ea15}, h1 clone homomorphisms encapsulate not
only the usual algebraical (primitive positive) construction, but also
homomorphic equivalence of relational structures. This said, coloring is defined
as a~homomorphism from a~certain `freely generated' relational structure given
by $\var V$ and $\rel B$ to the relational structure $\rel B$. There will always
be a~natural homomorphism from $\rel B$ to this freely generated structure.

\begin{definition} \label{def:free-structure}
  Given a~variety $\var V$ and $\rel B$ a~relational structure.  We define
  a~\emph{free structure generated by $\rel B$} to be a~relational structure
  $\rel F$ of the same signature as $\rel B$ whose underlying set is the
  universe of the free $B$-generated algebra $\al F$, and a~relation $R^{\rel
  F}$ is the smallest compatible relation on $F$ containing $R^{\rel B}$.
\end{definition}

As for a~free algebra, the elements of $\rel F$ are represented by terms of
$\var V$ over the set $B$ as their set of variables. Therefore, it is sensible
to denote the elements of the copy of $B$ in $F$ as
variables $x_b$, $b\in B$. Another way is to define $\al F$ as
being generated by the set $\{ x_b : b\in B \}$ rather then $B$ itself, and
$R^{\rel F}$ being the relations generated by $\{ (x_{b_1},\dots,x_{b_k}) :
(b_1,\dots,b_k) \in R^{\rel B} \}$. It is clear that both relational structures
are isomorphic, moreover they are also isomorphic as (mixed) structures
with both relations $R^{\rel F}$ and operations $f^{\al F}$.

\begin{definition} \label{def:coloring}
  A~\emph{coloring of terms} of a~variety $\var V$ by $\rel B$ (or \emph{$\rel
  B$-coloring of terms}) is any~homomorphism from the free structure
  generated by $\rel B$ to $\rel B$. A~coloring is \emph{strong} if its
  restriction to $B$ is an indentity mapping (maps $x_b$ to $b$).
\end{definition}

The following is a~consequence of the theory developed
in~\cite{barto.oprsal.ea15}; it follows from Propositions 7.4 and 7.5 of the
mentioned paper. Nevertheless, we present a~variant of the proof of equivalence
of (3) and (5) for completeness.

\begin{lemma} \label{lem:linear-varieties}
  Let $\var V$ be a~linear variety, $\clo F$ its clone, and $\rel B$
  a~relational structure.
  The following are equivalent:
  \begin{enumerate}
    \item There is a~clone homomorphism from $\clo F$ to $\clo B$;
    \item there is a~strong h1 clone homomorphism from $\clo F$ to $\clo B$;
    \item the terms of $\var V$ are strongly $\rel B$-colorable;
    \item $\var V$ is interpretable in the variety of actions of $\clo B$;
    \item $\var V$ contains an algebra compatible with $\rel B$.
  \end{enumerate}
\end{lemma}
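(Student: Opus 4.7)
The plan is to break the circle of implications into three parts. First, the equivalence $(1) \Leftrightarrow (4) \Leftrightarrow (5)$ comes for free from Lemma~\ref{lem:2.1} and requires no linearity assumption. Second, the easy implications $(1) \to (2) \to (3)$. Third, the main content $(3) \to (5)$, which is where linearity is essential and which the authors themselves promise to argue directly.

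For the second part, $(1) \to (2)$ is immediate: a clone homomorphism is in particular an h1 clone homomorphism, and since it sends projections to projections it is automatically strong. The implication $(2) \to (3)$ is a direct application of the correspondence between strong h1 clone homomorphisms and strong colorings established in Propositions~7.4 and~7.5 of~\cite{barto.oprsal.ea15}: given a strong h1 clone homomorphism $\xi\colon \clo F \to \clo B$, the assignment $t^{\al F}(x_{b_1},\dots,x_{b_n}) \mapsto \xi(t^{\al F})(b_1,\dots,b_n)$ gives a strong $\rel B$-coloring.

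The main step is $(3) \to (5)$. Given a strong coloring $h\colon \rel F \to \rel B$, my plan is to define an algebra $\al A$ on the universe $B$ in the signature of $\var V$ by setting, for each $n$-ary basic symbol $f$,
\[
  f^{\al A}(b_1,\dots,b_n) := h\bigl(f^{\al F}(x_{b_1},\dots,x_{b_n})\bigr).
\]
Compatibility of $\al A$ with $\rel B$ should then be straightforward: a tuple from $R^{\rel B}$ lifts to the tuple of corresponding $x_b$'s in $R^{\rel F}$; applying $f^{\al F}$ componentwise to several such tuples stays in $R^{\rel F}$ because $R^{\rel F}$ is by construction compatible with all operations of $\al F$; and $h$ then sends the resulting tuple to one in $R^{\rel B}$ because $h$ is a relational homomorphism.

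The hard part is showing $\al A \in \var V$, and this is where linearity enters. Since $\var V$ is linear, it can be axiomatized by identities in which both sides have height at most one, namely identities of the shape $f(y_{i_1},\dots,y_{i_n}) \approx g(y_{j_1},\dots,y_{j_m})$ (with possibly repeated indices) and $y_i \approx g(y_{j_1},\dots,y_{j_m})$. Each such identity, after substituting elements of $B$ and unfolding the definition of the operations of $\al A$, reduces to applying $h$ to the corresponding identity already holding in $\al F$; in the one-variable case, strongness of $h$ (that is, $h(x_b) = b$) is used to identify $h(x_{b_i})$ with $b_i$. The main obstacle is precisely this step: without linearity, the definition of $f^{\al A}$ only controls $h$-images of depth-one terms, so a nested identity could fail even when its one-step instances hold, and this is exactly why the equivalence breaks for general varieties and one is forced into Lemma~\ref{lem:2.1} alone.
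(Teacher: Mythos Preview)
Your proposal is correct and follows essentially the same approach as the paper: the equivalence of (1), (4), (5) is taken from Lemma~\ref{lem:2.1}, and the key step $(3)\to(5)$ is argued exactly as the paper does, by defining the algebra on $B$ via $f^{\al B}(b_1,\dots,b_n) = c(f^{\al F}(x_{b_1},\dots,x_{b_n}))$ and using linearity to verify membership in $\var V$. The only organizational difference is that the paper closes the cycle via $(2)\to(1)$ (citing \cite[Proposition~7.5]{barto.oprsal.ea15}) together with a direct proof of $(5)\to(3)$, whereas you close it via $(2)\to(3)\to(5)$; both routes are valid and rely on the same external input from \cite{barto.oprsal.ea15}.
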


\begin{proof} The equivalence of (1), (4), and (5) follows from
Lemma~\ref{lem:2.1}.
The implication $(1) \to (2)$ is trivial, and its converse is given by
\cite[Proposition 7.5]{barto.oprsal.ea15}.
To finish the proof, we show that (3) and (5) are equivalent.
We denote $\al F_B$, the free algebra in $\var V$ generated by
$B$, and $\rel F_B$, the free structure generated by $\rel B$. 

$(3) \to (5)$: A~strong coloring
is a~mapping $c\colon F_B \to B$. We define a~structure of a~$\var V$-algebra on
the set $\rel B$ as a~retraction (see \cite[Definition 4.1]{barto.oprsal.ea15})
of $\al F_B$ by $i$ and $c$ where $i\colon B \to F_B$ is an inclusion ($i(b) =
x_b$, resp.), i.e.,
\[
  f^{\al B} (b_1,\dots,b_n) = c( f^{\al F_B} ( x_{b_1}, \dots, x_{b_n} ) ).
\]
One can observe that operations defined this way satisfy all linear identities
satisfied by $\al F_B$ (see also \cite[Corollary 5.4]{barto.oprsal.ea15}),
therefore $\al B \in \var V$. To prove that $\al B$ is compatible with $\rel B$,
we consider a~relation $R^{\rel B}$ of $\rel B$ and $\tup b_1,\dots,\tup b_n \in
R^{\rel B}$. Then by the definition of $\rel F_B$, $\tup b_1,\dots,\tup b_n \in
R^{\rel F_B}$, therefore $f^{\al F_B} ( \tup b_1, \dots, \tup b_n ) \in R^{\rel
F_B}$, and $c( f^{\al F_B} ( \tup b_1, \dots, \tup b_n ) ) \in R^{\rel B}$ from
the definition of coloring. This shows that $f^{\al B}$ defined as above is
a~polymorphism of $\rel B$, and hence $\al B$ is compatible with $\rel B$.

$(5) \to (3)$: Let $\al B$ be an algebra in $\var V$ compatible with $\rel B$,
and let $c\colon \al F_B \to \al B$ be the natural homomorphism from $\al F_B$
that extends the identity mapping (the mapping $x_b \mapsto b$, resp.). We claim
that this mapping is a~coloring. Indeed, if $\tup f \in R^{\rel F_B}$ then there
is a~term $f$ and $\tup b_1,\dots, \tup b_n \in R^{\rel B}$ such that $\tup f =
f^{\al F_B}( \tup b_1,\dots,\tup b_n)$, therefore
\[
  c( \tup f ) = c( f^{\al F_B}( \tup b_1,\dots,\tup b_n) ) = f^{\al B} (\tup
  b_1,\dots,\tup b_n) \in R^{\rel B},
\]
since $\rel B$ is compatible with $\al B$. The coloring $c$ is strong by
definition.
\end{proof}

\subsection{Coloring by transitive relations}

Some of the relational structures we use for coloring have binary transitive
relations, or even equivalence relations.  In these cases, we can refine the
free structure $\rel F$ generated by a~relational structure $\rel B$ in the
following way.

\begin{lemma} \label{lem:coloring-by-quosets}
  Let $\rel B$ be a~relational structure, $\var V$ a~variety, $\rel F$ the free
  structure in $\var V$ generated by $\rel B$, and let $\rel F'$ be a~structure
  obtained from $\rel F$ by replacing every relation $R^{\rel F}$, for which
  $R^{\rel B}$ is transitive, by its transitive closure. Then a~mapping $c\colon
  F \to B$ is a~coloring if and only if it is a~homomorphism $c\colon \rel F'
  \to \rel B$.
\end{lemma}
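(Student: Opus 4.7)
The plan is to prove both directions by simply unwinding what the relations of $\rel F'$ look like compared to those of $\rel F$, and using the assumed transitivity of $R^{\rel B}$.

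For the easy direction ($\Leftarrow$), I note that for every relation symbol $R$ we have $R^{\rel F} \subseteq R^{\rel F'}$: if $R^{\rel B}$ is not transitive then $R^{\rel F'} = R^{\rel F}$ by construction, and if $R^{\rel B}$ is transitive then $R^{\rel F'}$ is the transitive closure of $R^{\rel F}$, which by definition contains $R^{\rel F}$. Hence any homomorphism $\rel F' \to \rel B$ restricts to a homomorphism $\rel F \to \rel B$, i.e.\ a coloring.

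For the forward direction ($\Rightarrow$), suppose $c \colon \rel F \to \rel B$ is a coloring. I need to check that $c$ preserves each relation $R^{\rel F'}$. If $R^{\rel B}$ is not transitive, then $R^{\rel F'} = R^{\rel F}$ and there is nothing to prove. If $R^{\rel B}$ is transitive (so $R$ is in particular binary), take $(a,b) \in R^{\rel F'}$; by definition of transitive closure there exist $a = a_0, a_1, \ldots, a_n = b$ with $(a_i, a_{i+1}) \in R^{\rel F}$ for each $i < n$. Since $c$ is a coloring, $(c(a_i), c(a_{i+1})) \in R^{\rel B}$ for each $i$, and transitivity of $R^{\rel B}$ then gives $(c(a), c(b)) \in R^{\rel B}$, as required.

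There is really no obstacle here: the lemma is a purely formal consequence of the definition of transitive closure together with the assumed transitivity of the target relations. The only thing to be mildly careful about is that the substitution ``$R^{\rel F} \leadsto$ transitive closure of $R^{\rel F}$'' is performed only for those $R$ whose interpretation in $\rel B$ is transitive, so that the above argument using transitivity of $R^{\rel B}$ applies in precisely the cases where $R^{\rel F'}$ differs from $R^{\rel F}$.
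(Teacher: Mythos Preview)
Your proof is correct and follows exactly the same approach as the paper's own proof, which simply observes that a homomorphism into a structure with a transitive relation $R^{\rel B}$ automatically preserves the transitive closure of the source relation, and conversely that enlarging source relations can only make it harder to be a homomorphism. Your write-up is in fact more detailed than the paper's, which dispatches the lemma in three sentences.
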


\begin{proof}
  This directly follows from the corresponding result about relational
  structures in general. In detail, given that $\rel A$ and $\rel B$ are
  relational structures sharing a~signature such that $R^{\rel B}$ is transitive,
  then every homomorphism from $\rel A$ to $\rel B$ maps transitive closure of
  $R^{\rel A}$ to $R^{\rel B}$. Conversely, if $\rel A'$ is obtained from $\rel A$
  as described in the proposition, then any homomorphism from $\rel A'$ to $\rel
  B$ is automatically a~homomorphism from $\rel A$ to $\rel B$.
\end{proof}

Following the notation of the lemma, note that, if the relation $R^{\rel B}$ is
symmetric binary relation, then also $R^{\rel F}$ is. This follows from
the fact that if $(f,g) \in R^{\rel F}$ then there is a~term $t$ of $\var V$ and
tuples $(a_1,b_1),\dots,(a_n,b_n) \in R^{\rel B}$ such that
\(
  t(a_1,\dots,a_n) = f \) and \(
  t(b_1,\dots,b_n) = g.
\)
Applying the same term $t$ to pairs $(b_1,a_1),\dots,(b_n,a_n) \in R^{\rel B}$,
we get that $(g,f) \in R^{\rel F}$. From this observation and the lemma above,
we get that if $R^{\rel B}$ is an equivalence relation, we can take $R^{\rel F}$
to be the congruence generated by $R^{\rel B}$. This case is further described
in the following lemma.

\begin{lemma} \label{lem:coloring-by-equivalences}
  Let $\rel B$ be a~finite relational structure with $B = \{1,\dots,n\}$,
  $\alpha^{\rel B}$ an~equivalence relation,
  $\al F$ the free $B$-generated algebra,
  and $\rel F$ the free structure in $\var V$ generated by $\rel B$. Then the
  the following three definitions give the same relation:
  \begin{enumerate}
    \item the transitive closure of $\alpha^{\rel F}$,
    \item the congruence of $\al F$ generated by $\alpha^{\rel B}$,
    \item the set of all pairs
      \(
        ( f^{\al F}(1,\dots,n) , g^{\al F}(1,\dots,n))
      \)
      where $f,g$ are terms of $\var V$ such that
      \(
        f(x_1,\dots,x_n) \equals g(x_1,\dots,x_n)
      \)
      is satisfied in $\var V$ whenever $x_i \equals x_j$ for all $(i,j) \in
      \alpha^{\rel B}$.
  \end{enumerate}
\end{lemma}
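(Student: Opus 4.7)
The plan is to prove the three equalities in the cycle $(1)=(2)$ and $(2)=(3)$.

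For $(1)=(2)$, I would first observe that $\alpha^{\rel F}$ is already reflexive, symmetric and compatible with the operations of $\al F$: reflexivity holds because for any $f=t^{\al F}(x_{b_1},\dots,x_{b_k})\in F$ the reflexive pairs $(b_i,b_i)\in\alpha^{\rel B}\subseteq\alpha^{\rel F}$ and compatibility with $t$ give $(f,f)\in\alpha^{\rel F}$; symmetry is the observation already made in the paragraph preceding the lemma. Since a compatible relation remains compatible after taking its transitive closure (iterating the closure of $R^n$ under operations), the transitive closure of $\alpha^{\rel F}$ is a congruence of $\al F$ containing $\alpha^{\rel B}$, so it contains the congruence $\theta$ generated by $\alpha^{\rel B}$. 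Conversely, any such $\theta$ is compatible and contains $\alpha^{\rel B}$, hence contains $\alpha^{\rel F}$ by minimality, and being transitive it contains the transitive closure of $\alpha^{\rel F}$. This gives $(1)=(2)$.

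For $(2)=(3)$, I would work with the quotient $\al F/\theta$ where $\theta$ is the congruence generated by $\alpha^{\rel B}$. The key auxiliary fact is that $\al F/\theta$ is again a free algebra in $\var V$, namely the free algebra generated by the quotient set $B/\alpha^{\rel B}$, with generators $[1],\dots,[n]$ satisfying $[i]=[j]$ iff $(i,j)\in\alpha^{\rel B}$. This is a standard consequence of the universal property of free algebras: a homomorphism out of $\al F/\theta$ corresponds to a map out of $B$ that identifies $\alpha^{\rel B}$-related elements, i.e.\ to a map out of $B/\alpha^{\rel B}$. Under this identification, the pair $(f^{\al F}(1,\dots,n), g^{\al F}(1,\dots,n))$ lies in $\theta$ iff $f^{\al F/\theta}([1],\dots,[n]) = g^{\al F/\theta}([1],\dots,[n])$, and by freeness of $\al F/\theta$ over $B/\alpha^{\rel B}$ this is equivalent to the identity $f(x_1,\dots,x_n)\equals g(x_1,\dots,x_n)$ holding in $\var V$ whenever $x_i$ is identified with $x_j$ for every $(i,j)\in\alpha^{\rel B}$.

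The main obstacle, if any, is justifying cleanly that $\al F/\theta$ is the free $\var V$-algebra on $B/\alpha^{\rel B}$. This follows by checking the universal property: given any $\al A\in\var V$ and a map $h\colon B/\alpha^{\rel B}\to A$, the composition with $B\to B/\alpha^{\rel B}$ extends uniquely to a homomorphism $\bar h\colon\al F\to\al A$, and $\bar h$ factors through $\al F/\theta$ precisely because its kernel contains $\alpha^{\rel B}$ and is a congruence, hence contains $\theta$. The rest of the argument is bookkeeping, and the overall proof is short.
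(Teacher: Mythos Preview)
Your proof is correct but organized differently from the paper's. The paper argues in a cycle $\alpha_1\subseteq\alpha_2\subseteq\alpha_3\subseteq\alpha_1$: the inclusion $\alpha_2\subseteq\alpha_3$ is obtained by checking directly that $\alpha_3$ is a congruence containing $\alpha^{\rel B}$, and the inclusion $\alpha_3\subseteq\alpha_1$ is obtained by picking a representative $i_k$ in each $\alpha^{\rel B}$-class and observing that for any pair $(f^{\al F}(1,\dots,n),g^{\al F}(1,\dots,n))\in\alpha_3$ one has $f^{\al F}(i_1,\dots,i_n)=g^{\al F}(i_1,\dots,i_n)$, so the pair lies in $\alpha^{\rel F}\circ\alpha^{\rel F}$. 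Your argument instead establishes $(1)=(2)$ and $(2)=(3)$ separately, the second via the universal property identifying $\al F/\theta$ with the free $\var V$-algebra on $B/\alpha^{\rel B}$. Your route is more conceptual and avoids the explicit choice of representatives; the paper's route is more hands-on and, as a bonus, shows that the transitive closure in (1) is already reached after a single composition $\alpha^{\rel F}\circ\alpha^{\rel F}$, a quantitative fact your argument does not extract.
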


\begin{proof}
  Let us denote the relation defined in the item ($i$) of the above list by
  $\alpha_i$.  Clearly, $\alpha_1 \subseteq \alpha_2$ since $\alpha_2$ is
  transitive and contains $\alpha^{\rel F}$. We will prove $\alpha_2 \subseteq
  \alpha_3 \subseteq \alpha_1$.

  $\alpha_2 \subseteq \alpha_3$: Clearly, $\alpha_3$ is symmetric,
  transitive, and reflexive. It is straightforward to check that it is
  compatible with operations of $\al F$.
  Finally, we claim that $\alpha^{\rel B} \subseteq \alpha_3$. Indeed, pick
  $(i,j) \in \alpha^{\rel B}$ and choose $f(x_1,\dots,x_n) = x_i$ and
  $g(x_1,\dots,x_n) = x_j$. Clearly, $f$ and $g$ satisfies the condition in (3),
  therefore $(i,j) = (f^{\al F}(1,\dots,n),g^{\al F}(1,\dots,n)) \in \alpha_3$.
  We proved that $\alpha_3$ is a~congruence of $\al F$ containing $\alpha^{\rel
  B}$, which concludes $\alpha_2 \subseteq \alpha_3$.

  $\alpha_3 \subseteq \alpha_1$: Let $(f^{\al F}(1,\dots,n),g^{\al
  F}(1,\dots,n))$ be a~typical pair in $\alpha_3$. Fix a~representative for each
  $\alpha^{\rel B}$ class, and let $i_k$ denote the representative of the class
  containing~$k$. From the definition of $\alpha^{\rel F}$, we obtain
  \(
    ( f^{\al F}(1,\dots,n), f^{\al F}({i_1},\dots,{i_n}) )
      \in \alpha_{\rel F}
  \) and 
  \(
    ( g^{\al F}({i_1},\dots,{i_n}), g^{\al F}(1,\dots,n) )
    \in \alpha_{\rel F}
  \).
  Since $f(x_{i_1},\dots,x_{i_n}) \equals g(x_{i_1},\dots,x_{i_n})$ is true
  in $\var V$, we get that $f^{\al F}({i_1},\dots,{i_n}) = g^{\al
  F}({i_1},\dots,{i_n})$, and consequently
  \[
    \bigl( f^{\al F}(1,\dots,n), g^{\al F}(1,\dots,n) \bigr) \in
      \alpha^{\rel F} \circ \alpha^{\rel F} \subseteq \alpha_1
  \]
  which concludes the proof of $\alpha_3 \subseteq \alpha_1$.
\end{proof}

As a~consequence of the previous lemma, we also obtain that coloring by
a~relational structure $\rel B$ whose all relations are equivalence relation is
equivalent to having terms compatible with projections as defined in
\cite{sequeira06} (compare item (3) with Definition 5.2 of \cite{sequeira06}).
The corresponding set $(\alpha_1,\dots,\alpha_k)$ of projections can be obtained
from the relations of $\rel B$.

\subsection{Tarski's construction}
  \label{sec:tarski-construction}

In this section we describe a~transfinite construction that appeared in
\cite{burris71} and is attributed to Tarski. This construction is an algebraical
version of L\"owenheim-Skolem theorem, and we will use it to show that the
chains of clones $\clo P_\kappa$, $\clo B_\kappa$, and $\clo C_\kappa$ defined in
the following sections are indeed chains as ordered by an existence of a~clone
homomorphism.

Given an~algebra $\al A$, we define a~sequence $\al A_\lambda$ indexed by
ordinals by the following transfinite construction:
We start with $\al A_0 = \al A$. For an ordinal successor $\lambda + 1$ define
$\al A_{\lambda+1}$ as an algebra isomorphic to $\al A_\lambda^2$ while
identifying the diagonal with $\al A_\lambda$, i.e., take $A_{\lambda+1}
\subseteq A_\lambda$ with a~bijection $f_\lambda\colon A_\lambda^2 \to
A_{\lambda+1}$ such that $f_\lambda(a,a) = a$, and define the structure of $\al
A_{\lambda+1}$ in such a way that $f_\lambda$ is an~isomorphism.
For a~limit ordinal $\lambda$, we set $\al A_\lambda =
\Union_{\alpha < \lambda} \al A_\alpha$.
This construction produces algebras in the variety generated by $\al A$ of all
infinite cardinalities larger than $|A|$. Indeed, one can observe that
$|A_\lambda| = \max\{ |\lambda|, |A| \}$ for all infinite $\lambda$.
We can also follow this construction to get from a~fixed proper subset
$U\subset A$, $U\neq \emptyset$ a~set $U_\lambda \subseteq A_\lambda$ such that both its
cardinality and cardinality of its complement is $|\lambda|$ (given that
$|\lambda| \geq |A|$):
We put $U_0 = U$;
for ordinal successor $\lambda+1$, we take $U_{\lambda + 1} = f_\lambda( A_\lambda
\times U_\lambda )$ (observe that $U_{\lambda+1} \cap A_\lambda = U_\lambda$);
and for limit $\lambda$, we take $U_\lambda = \Union_{\alpha < \lambda}
U_\alpha$.
 
\section{Taylor's conjecture}

In this section, we prove Theorem~\ref{thm:taylors-conjecture-introduction}
about Taylor's conjecture on congruence modular varieties. 
Congruence modular varieties have been thoroughly investigated, and they have
many nice properties, nevertheless we need only the definition and the following
Mal'cev characterization of these varieties by A.~Day \cite{day69}:

\begin{theorem} \label{thm:day}
The following are equivalent for any variety $\var V$.
\begin{enumerate}
  \item Every algebra in $\var V$ has modular congruence lattice;
  \item there exists $n$, and quaternary terms $d_0,\dots,d_n$ 
    such that the following identities are satisfied in $\var V$:
    \begin{align*}
      d_0(x,y,z,w) &\equals x \text{ and } d_n(x,y,z,w) \equals w, \\
      d_i(x,y,y,z) &\equals d_{i+1}(x,y,y,z) \text{ for even $i$,} \\
      d_i(x,x,y,y) &\equals d_{i+1}(x,x,y,y) \text{ and }
      d_i(x,y,y,x) \equals d_{i+1}(x,y,y,x) \text{ for odd $i$}.
    \end{align*}
\end{enumerate}
\end{theorem}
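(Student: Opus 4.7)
The plan is to treat the two directions separately, following A.~Day's classical argument.

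For $(2) \to (1)$: assume Day terms $d_0,\dots,d_n$ exist, let $\al A \in \var V$ carry congruences $\alpha \leq \beta$ and $\gamma$, and pick $(a,b) \in \beta \cap (\alpha \vee \gamma)$. Fix a zig-zag chain $a = u_0, u_1, \dots, u_m = b$ whose consecutive links alternate in $\alpha$ and $\gamma$. Substitute into the Day terms to form the elements $d_i(a, u_j, u_{j+1}, b)$: the boundary identities $d_0 \equals x$, $d_n \equals w$ pin the $i=0$ and $i=n$ rows to $a$ and $b$, while a direct case analysis using the even-$i$ and odd-$i$ identities shows that, depending on whether the link $(u_j,u_{j+1})$ is an $\alpha$- or $\gamma$-link, consecutive values in $i$ lie either in $\alpha$ or in $\beta \cap \gamma$. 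Concatenating the resulting pieces yields a chain witnessing $(a,b) \in \alpha \vee (\beta \cap \gamma)$. This is the standard computation and presents no real obstacle.

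For $(1) \to (2)$, work inside the free $\var V$-algebra $\al F$ on the four generators $x, y, z, w$, and consider
\[
\alpha = \Cg((x,w),(y,z)), \quad
\beta = \Cg((x,y),(z,w)), \quad
\gamma = \Cg((y,z)).
\]
Then $\gamma \leq \alpha$; moreover $(x,w) \in \alpha$ by construction, while $(x,w) \in \beta \vee \gamma$ via the zig-zag $x \mathrel{\beta} y \mathrel{\gamma} z \mathrel{\beta} w$. Modularity applied to the pair $\gamma \leq \alpha$ gives $\alpha \cap (\beta \vee \gamma) = \gamma \vee (\alpha \cap \beta)$, so $(x,w)$ admits a finite chain $x = t_0, t_1, \dots, t_n = w$ in $\al F$ whose consecutive links lie alternately in $\gamma$ and in $\alpha \cap \beta$.

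Each $t_i$ is $d_i(x,y,z,w)$ for some quaternary term, with $d_0$ and $d_n$ forced to be the first and fourth projections. Each quotient $\al F/\gamma$, $\al F/\alpha$, $\al F/\beta$ is itself a free $\var V$-algebra on the identified generators, so membership of a pair of terms in one of these congruences translates to an identity of $\var V$: $(t_i,t_{i+1}) \in \gamma$ becomes $d_i(x,y,y,z) \equals d_{i+1}(x,y,y,z)$; $(t_i,t_{i+1}) \in \alpha$ becomes $d_i(x,y,y,x) \equals d_{i+1}(x,y,y,x)$; and $(t_i,t_{i+1}) \in \beta$ becomes $d_i(x,x,y,y) \equals d_{i+1}(x,x,y,y)$. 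A short padding argument, inserting duplicate terms to enforce strict alternation and to match the even/odd parity convention in the statement, produces exactly the Day identities.

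The main obstacle is the choice of the three congruences on $\al F$: $\alpha$ and $\beta$ must be selected so that their intersection simultaneously forces both odd-index identities, $\gamma$ must sit below $\alpha$ to allow the modular law to be applied, and the generating pairs must line up so that $(x,w)$ lies both in $\alpha$ and in $\beta \vee \gamma$. Once this combinatorial setup is pinned down, the translation from principal-congruence membership to equational identities and the final parity padding are routine.
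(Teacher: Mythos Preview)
The paper does not actually prove this theorem: it is stated as Day's classical result \cite{day69} and used without proof. Your proposal is the standard Day argument and is correct; in particular, your choice of congruences $\alpha = \Cg((x,w),(y,z))$, $\beta = \Cg((x,y),(z,w))$, $\gamma = \Cg((y,z))$ on the four-generated free algebra is exactly the one the paper later exploits (in the proof of Proposition~\ref{prop:coloring-for-modularity} and the subsequent lemma), and the translation of congruence membership into identities via the free quotients is routine as you describe.
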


A~sequence of terms satisfying the identities in (2) is referred to as
\emph{Day terms}; we will also say that some functions (or polymorphisms) are
\emph{Day functions} if they satisfy these identities.

\subsection{Pentagons}

For the description of a~cofinal chain of clones in the complement of the
filter of clones containing Day terms, we will use relational structures
of the form $\rel P = (P; \alpha, \beta, \gamma)$ where $\alpha$, $\beta$, and
$\gamma$ are equivalence relations on $P$ that do not satisfy the modularity
law.
A~very similar structures have been used in \cite{bova.chen.valeriote13} to
prove that the problem of comparison of pp-formulae is coNP-hard for algebras
that do not generate congruence modular varieties; it was also used in
\cite{mcgarry09}. The following definition of a~pentagon is almost identical to
the one in \cite{bova.chen.valeriote13} (we focus on those pentagon which are by
themselves `interesting'). For our purpose we need pentagons of even more
special shape; we call them special and very special pentagons.

\begin{definition} A~\emph{pentagon} is a~relational structure
$\rel P$ in the signature $\{\alpha,\beta,\gamma\}$ with three binary relations
that are all equivalence relations on $P$ satisfying:
\begin{itemize}
  \item $\alpha^{\rel P} \meet \beta^{\rel P} = 0_P$,
  \item $\alpha^{\rel P} \circ \beta^{\rel P} = 1_P$,
  \item $\gamma^{\rel P} \join \beta^{\rel P} = 1_P$, and
  \item $\gamma^{\rel P} < \alpha^{\rel P}$.
\end{itemize}
\end{definition}

The first two items in this definition ensure that every pentagon naturally
factors as a~direct product $P = A \times B$ in such a~way that $\alpha$ and
$\beta$ are kernels of projections on the first and the second coordinate,
respectively. In this setting, $\gamma$ is an equivalence relation that relates
some pairs of the form $((a,b),(a,c))$. For an~equivalence $\gamma$ on a~product
$A\times B$, and $a \in A$, we define $\gamma^a$ to be the following equivalence
on $B$:
\[
  \gamma^a := \{ (b,c) : ((a,b),(a,c)) \in \gamma \}.
\]
A~pentagon on the set $A \times B$ (with $\alpha$ and $\beta$ being the two
kernels of projections) is said to be \emph{special} if $\gamma^a$ for $a\in A$
gives exactly two distinct congruences with one of them being the full
congruence on $B$, i.e., there exists $\eta < 1_B$ such that
\[
  \{ \gamma^a : a\in A \} = \{ 1_B, \eta \}.
\]
Such~pentagon is \emph{very special} if the above is true for $\eta = 0_B$. When
defining a~special, or a~very special pentagon $\rel P$ on a~product $A\times
B$, we will usually speak only about the relation $\gamma^{\rel P}$ since the
other two relations are implicitly given as the kernels of projections.

In this section we will use pentagons $\rel P_0$ and $\rel P_\kappa$ where
$\kappa$ is an infinite cardinal. All of these pentagons are either themselves
very special, or isomorphic to a~very special pentagon.

\begin{definition} \label{def:pentagon_0}
We define $\rel P_0$ as the~smallest possible pentagon. In detail: $P_0 =
\{0,1,2,3\}$ and $\alpha^{\rel P_0}$, $\beta^{\rel P_0}$, and $\gamma^{\rel
P_0}$ are equivalences defined by partitions $12|03$, $01|23$, and $12|0|3$,
respectively.

\begin{figure}[ht]
  \begin{tikzpicture}
    \node (0) at (0,0) {0};
    \node (1) at (2.5,0) {1};
    \node (2) at (2.5,2.5) {2};
    \node (3) at (0,2.5) {3};

    \draw (0) edge [bend left] node[midway,left] {$\alpha$} (3);
    \draw (1) edge [bend left] node[midway,left] {$\alpha$} (2);

    \draw (1) edge [bend right] node[midway,right] {$\gamma$} (2);

    \draw (0) edge node[midway,above] {$\beta$} (1);
    \draw (2) edge node[midway,above] {$\beta$} (3);
  \end{tikzpicture}
  \caption{The pentagon $\rel P_0$}
\end{figure}
\end{definition}

The pentagon $\rel P_0$ is isomorphic to a~very special pentagon on the set $P =
\{0,1\} \times \{0,1\}$ with $\gamma^0 = 0_{\{0,1\}}$ and $\gamma^1 =
1_{\{0,1\}}$; an isomorphism is given by the map: $0 \mapsto (0,0)$, $1 \mapsto
(1,0)$, $2 \mapsto (1,1)$, and $3 \mapsto (0,1)$.

\begin{definition} \label{def:pentagon_kappa}
For an~infinite cardinal $\kappa$, we define a~pentagon $\rel P_\kappa$: Fix
$U_\kappa \subseteq \kappa$ with $|U_\kappa| = |\kappa \setminus U_\kappa| =
\kappa$, and define $P_\kappa = \kappa \times \kappa$, $\alpha^{\rel P_\kappa}$
and $\beta^{\rel P_\kappa}$ to be the kernels of the first and the second
projection, respectively, and
\[
  \gamma^{\rel P_\kappa} = \{ ((a,b),(a,c)) : a,b,c\in\kappa
    \text{ such that } a \in U_\kappa \text{ or } b = c \}
.\]
\end{definition}

\subsection{Coloring and linear varieties}

Using Day terms, we obtain a~characterization of congruence non-modular
varieties by the means of coloring by the pentagon $\rel P_0$ defined above.

\begin{proposition} \label{prop:coloring-for-modularity}
A~variety $\var V$ does not have Day terms if and only if it has strongly $\rel
P_0$-colorable terms.
\end{proposition}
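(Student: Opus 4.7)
The plan is to work in the free $P_0$-generated algebra $\al F$ of $\var V$ and exploit the descriptions, provided by Lemmas~\ref{lem:coloring-by-quosets} and~\ref{lem:coloring-by-equivalences}, of the three congruences $\bar\rho$ of $\al F$ generated by $\rho^{\rel P_0}$ (for $\rho\in\{\alpha,\beta,\gamma\}$). By those lemmas, a strong $\rel P_0$-coloring is simply a map $c\colon F\to P_0$ that extends the identity on $\{x_0,x_1,x_2,x_3\}$ and sends each $\bar\rho$ into $\rho^{\rel P_0}$.

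For the direction from colorability to the absence of Day terms, I would assume that $\var V$ admits Day terms $d_0,\dots,d_n$ together with a strong coloring $c$, and set $e_i=d_i^{\al F}(x_0,x_1,x_2,x_3)$. The Day identities, read through Lemma~\ref{lem:coloring-by-equivalences}(3), give $(e_i,e_{i+1})\in\bar\gamma$ for even~$i$ and $(e_i,e_{i+1})\in\bar\alpha\cap\bar\beta$ for odd $i$. Since $\alpha^{\rel P_0}\cap\beta^{\rel P_0}=0_{P_0}$ and the $\gamma^{\rel P_0}$-class of $0$ is just $\{0\}$, applying $c$ and inducting from $c(e_0)=0$ forces $c(e_n)=0$, contradicting $c(e_n)=3$.

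For the converse I would identify $P_0\cong\{0,1\}^2$ via $0\mapsto(0,0)$, $1\mapsto(1,0)$, $2\mapsto(1,1)$, $3\mapsto(0,1)$ and construct $c=(c_1,c_2)$ with $c_j\colon F\to\{0,1\}$. Let $c_1(f)=0$ if $(f,x_0)\in\bar\alpha$ and $c_1(f)=1$ otherwise; then $c_1$ is $\bar\alpha$-invariant and, because $\bar\gamma\subseteq\bar\alpha$, also $\bar\gamma$-invariant. For $c_2$, consider the equivalence $\sim$ on $[x_0]_{\bar\alpha}$ generated by the restrictions of $\bar\beta$ and $\bar\gamma$ to this class; set $c_2(f)=0$ on the $\sim$-class of $x_0$, $c_2(f)=1$ on the rest of $[x_0]_{\bar\alpha}$, and extend to all of $F$ by $\bar\beta$-invariance (consistently, since $\bar\beta$-related pairs inside $[x_0]_{\bar\alpha}$ lie in $\sim$ by construction). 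Verification of the coloring conditions is then routine: for any $\bar\gamma$-pair $(f,g)$ one has $c_1(f)=c_1(g)$; if this common value is $0$ then $(f,g)\in\sim$ yields $c_2(f)=c_2(g)$, while if it is $1$ then $c(f)$ and $c(g)$ both have first coordinate~$1$ and are therefore $\gamma^{\rel P_0}$-related.

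The whole construction rests on the claim that $x_0\not\sim x_3$ whenever $\var V$ has no Day terms, equivalently that $x_0\sim x_3$ implies the existence of Day terms; its converse is precisely the second paragraph. For the claim itself --- the main obstacle --- any $\sim$-chain $x_0=f_0,\dots,f_m=x_3$ (necessarily lying in $[x_0]_{\bar\alpha}$, with consecutive pairs in $\bar\beta$ or $\bar\gamma$) lifts via a choice of representative terms $t_i$ to a sequence of $4$-ary terms satisfying $t_i(x,y,y,x)\equals x$ and with each consecutive pair satisfying either $t_i(x,x,y,y)\equals t_{i+1}(x,x,y,y)$ or $t_i(x,y,y,z)\equals t_{i+1}(x,y,y,z)$. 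Day terms are then produced by interleaving trivial duplicated terms so that even-indexed steps supply the $\gamma$-identity and odd-indexed steps the $\beta$-identity, the remaining Day identity $d_i(x,y,y,x)\equals d_{i+1}(x,y,y,x)$ at odd steps following automatically from $t_i(x,y,y,x)\equals x$. The main care here is normalizing arbitrary alternation patterns of $\bar\beta$ and $\bar\gamma$ edges while preserving the boundary identities $d_0\equals x$ and $d_n\equals w$, which I would handle by the standard padding trick at positions of the wrong parity.
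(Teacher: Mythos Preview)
Your proposal is correct and follows essentially the same approach as the paper: the two colorings coincide, since your $[x_0]_\sim$ equals the paper's $c^{-1}(0)=\{f:(f,x_0)\in(\bar\alpha\cap\bar\beta)\vee\bar\gamma\}$, and your product decomposition $P_0\cong\{0,1\}^2$ merely reorganizes the paper's direct four-valued definition of $c$. Two small points to make explicit: specify $c_2$ on $\bar\beta$-classes disjoint from $[x_0]_{\bar\alpha}$ (setting it to $1$ works and preserves all three relations), and note that checking $c(x_1)=1$ and $c(x_2)=2$ uses $x_1\notin[x_0]_{\bar\alpha}$, which amounts to non-triviality of $\var V$.
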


\begin{proof}
  Let $\rel F$ denote the structure obtained from the free structure in $\var
  V$ generated by $\rel P_0$ by replacing every its relation by its transitive
  closure, and note that all three relations of $\rel F$ are equivalences (see
  Lemma~\ref{lem:coloring-by-equivalences}), and also that $c\colon F \to P_0$ is
  a~coloring if and only if it is a~homomorphism from $\rel F$ to $\rel P_0$
  (see Lemma~\ref{lem:coloring-by-quosets}). To simplify the notation we
  identify a~4-ary term $d$ with the element \hbox{$d(x_0,x_1,x_2,x_3) \in F$}.

  First, we prove the implication from left to right. For a~contradiction,
  suppose that $\var V$ has Day terms $d_0,\dots,d_n$, and that there is
  a~strong coloring $c\colon \rel F \to \rel P_0$.  Any strong coloring $c$ maps
  $x_0$, and therefore also $d_0$, to $0$, and it satisfies \( (
  c(d_i),c(d_{i+1}) ) \in \gamma \) for even $i$, and \( ( c(d_i),c(d_{i+1}) )
  \in \alpha \meet \beta \) for odd $i$; this follows from Day identities that
  can be reformulated as: $(d_i,d_{i+1}) \in \gamma^{\rel F}$ for even $i$ and
  $(d_i,d_{i+1}) \in \alpha^{\rel F} \meet \beta^{\rel F}$ for odd $i$.  Using
  these observations, we obtain by induction on $i$ that $c$ maps $d_i$ to $0$
  for all $i$, in particular $c(x_3) = c(d_n) = 0$. This gives us
  a~contradiction with $c(x_3) = 3$.

  For the converse, we want to prove that Day terms are the only obstruction for
  having strongly $\rel P_0$-colorable terms. We will do that by defining
  a~valid strong coloring of terms of any $\var V$ that does not have Day terms.
  Observe that even in this case we can repeat the argumentation from the above
  paragraph to get that $c(f) = 0$ for any $f$ that is connected to $x_0$ by
  a~Day-like chain, i.e., terms $d_0$, \dots, $d_n$ satisfying Day identities
  where $d_n \equals x_3$ is replaced by $d_n \equals f$. Therefore, we put
  $c(f) = 0$ if this is the case.
  Furthermore, for any $f$ with $(f,g) \in \beta^{\rel F}$ for some $g$ having
  the property above, we have $c(f) \in \{0,1\}$. We put $c(f) = 1$ if this is
  the case and we have not defined the value $c(f)$ yet. Similarly, if $(f,g)
  \in \alpha^{\rel F}$ for some $g$ with $c(g) = 0$ and $c(g)$ undefined, we put
  $c(f) = 3$ following the rule that $c(f) \in \{0,3\}$. Note that in this step
  we have defined $c(x_3) = 3$ since $(x_0,x_3) \in \alpha^{\rel F}$. Finally,
  for the remaining $f \in F$ we put $c(f) = 2$. This definition is summarized
  as follows:
  $c(f) = 0$ if $(f,x_0) \in (\freeclo\alpha \meet \freeclo\beta) \join
    \freeclo \gamma $, otherwise:
  \[
    c(f) = \begin{cases}
      1 & \text{if $(f,g) \in \freeclo\beta$ for some $g$ with $c(g) = 0$,}\\
      3 & \text{if $(f,g) \in \freeclo\alpha$ for some $g$ with $c(g) = 0$, 
          and} \\
      2 & \text{in all remaining cases.}
    \end{cases}
  \]
  Note that if $f$ satisfies both the first and the second row of the
  above, say there are $g_1$, $g_2$ with $c(g_i) = 0$, $(f,g_1) \in
  \beta^{\rel F}$, and $(f,g_2) \in \alpha^{\rel F}$, then also $(f,g_1) \in
  \alpha^{\rel F}$ since
  \(
    (\freeclo\alpha \meet \freeclo\beta) \join \freeclo \gamma
    \leq \freeclo \alpha
  \), and $c(f)$ is assigned value $0$ in the first step. Therefore, $c$ is
  well-defined.

  We claim that $c$ defined this way is a~homomorphism from $\rel F$ to
  $\rel P_0$, and therefore a~coloring. To prove that we need to show that $c$
  preserves all $\alpha$, $\beta$, and $\gamma$. First, let $(f,g) \in
  \alpha^{\rel F}$.
  Then either there is $h\in F$ with $c(h) = 0$ in the
  $\alpha^{\rel F}$-class of $f$ and $g$, and both $c(f)$ and $c(g)$ are
  assigned values in $\{0,3\}$, or there is no such $h$ and both $c(f)$ and
  $c(g)$ are assigned values in $\{1,2\}$. Either way $(c(f),c(g)) \in
  \alpha^{\rel P_0}$. The same argument can be used for showing $(c(f),c(g)) \in
  \beta^{\rel P_0}$ given that $(f,g) \in \beta^{\rel F}$. Finally, suppose that
  $(f,g) \in \gamma^{\rel F}$. If $c(f) \in \{1,2\}$, we immediately get that
  $c(g) \in \{1,2\}$ since $c$ preserves $\alpha$. If this is not the case, and
  $f$ and $g$ are assigned values that are not in the same class of $\gamma^{\rel
  P_0}$, then $c(f) = 0$ and $c(g) = 3$, or vice-versa. But this implies that
  $(f,g) \notin \gamma^{\rel F}$ as otherwise both would be assigned the value
  $0$. This concludes that $c$ is a~coloring.

  We are left to prove that $c$ is strong, i.e., $c(x_i) = i$ for $i=0,1,2,3$. 
  We know (from the defintion of $c$) that $c(x_0) = 0$ and $c(x_3) = 3$.
  Now, $x_1$ and $x_2$ are assigned values that are in the
  same class of $\gamma^{\rel P_0}$, and moreover $(c(x_1),c(x_0))\in
  \beta^{\rel P_0}$ and $(c(x_3),c(x_2))\in \beta^{\rel P_0}$. This leaves us
  with the only option: $c(x_1) = 1$ and $c(x_2) = 2$.
\end{proof}

Note that in the previous proof we use the same three congruences ($\alpha^{\rel
F}$, $\beta^{\rel F}$, and $\gamma^{\rel F}$) of the free algebra generated by
a~four element set that also appeared in the original proof of Day's results.
This is not a pure coincidence, we will use a~very similar method of deriving
a~suitable structure for coloring in Section~\ref{sec:n-permutability}.

\subsection{Idempotent varieties}
  \label{sec:modularity.idempotent}

In this subsection, we prove a~generalization of the following theorem of McGarry
\cite{mcgarry09} for idempotent varieties that do not need to be locally finite. 

\begin{theorem}[\cite{mcgarry09}]
A~locally finite idempotent variety is not congruence modular if and
only if it contains and algebra compatible with a~special pentagon.
\qed
\end{theorem}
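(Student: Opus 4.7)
The easier direction is a direct lattice computation. Suppose $\al M \in \var V$ is compatible with a special pentagon $\rel P = (M; \alpha, \beta, \gamma)$ on $M = A \times B$. Then $\alpha$, $\beta$, $\gamma$ are congruences of $\al M$, and the pentagon axioms $\alpha \meet \beta = 0_M$, $\gamma \join \beta = 1_M$, $\gamma < \alpha$ give $\gamma \join (\alpha \meet \beta) = \gamma < \alpha = \alpha \meet (\gamma \join \beta)$, witnessing failure of the modular law in the congruence lattice of $\al M$. Hence $\var V$ is not congruence modular.

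For the converse, suppose $\var V$ is not congruence modular. By Day's theorem $\var V$ has no Day terms, which combined with local finiteness yields a finite $\al M \in \var V$ and congruences $\alpha', \beta', \gamma'$ of $\al M$ with $\gamma' \leq \alpha'$ and $\gamma' \join (\alpha' \meet \beta') < \alpha' \meet (\gamma' \join \beta')$. Passing to the quotient $\al M / (\alpha' \meet \beta')$, and then replacing $\alpha'$ by $\alpha' \meet (\gamma' \join \beta')$ and $\gamma'$ by $\gamma' \join (\alpha' \meet \beta')$, one reduces to the case $\alpha \meet \beta = 0_M$, $\alpha \join \beta = \gamma \join \beta = 1_M$, and $\gamma < \alpha$.

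The crucial step is to upgrade $\alpha \join \beta = 1_M$ to $\alpha \circ \beta = 1_M$, so that $M$ factors as $A \times B$ with $\alpha$, $\beta$ the kernels of the two projections. Here the combination of idempotence and local finiteness is essential: using tame congruence theory one may pass to a minimal subalgebra of a suitable power of $\al M$ on which $\alpha$ and $\beta$ permute, while a restriction of $\gamma$ still witnesses non-modularity. Once a pentagon is in hand, pick one with $|B|$ minimal. A further minimization then shows the pentagon can be taken to be special: if $\{\gamma^a : a \in A\}$ contained three or more distinct values on $B$, we could restrict to the $\alpha$-sub-block on which only two are realised, contradicting minimality; and the presence of $1_B$ among the $\gamma^a$-values is forced by $\gamma \join \beta = 1_M$ together with a final quotient argument on $B$.

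The main obstacle is the permutability upgrade: a failure of the modular law does not automatically involve permuting congruences, and producing them requires the interplay of idempotence with local finiteness, typically via tame congruence theory or an equivalent structural tool. The subsequent extraction of a \emph{special} pentagon from a general one is then a routine, if somewhat delicate, minimization, and the easy direction is immediate from the lattice identity above.
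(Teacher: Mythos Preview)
Your easy direction is correct and matches the paper's implicit treatment. The hard direction, however, has a genuine gap at the ``permutability upgrade'': you invoke tame congruence theory to pass from a non-modular triple $(\alpha,\beta,\gamma)$ to one in which $\alpha$ and $\beta$ permute, but you name no specific result, and there is no standard TCT statement that does this. TCT localizes to minimal sets and traces inside a fixed algebra; it does not in general manufacture a new algebra in which two given congruences become factor congruences while a third still witnesses non-modularity. Your subsequent minimization is also shaky: you choose $|B|$ minimal, then propose to ``restrict to the $\alpha$-sub-block on which only two values of $\gamma^a$ are realised'', but that restricts $A$, not $B$, so nothing contradicts the minimality you set up; and in any case such a block need not be a subuniverse of $\al A$.

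The paper (and, according to the paper, McGarry's first step as well) avoids both problems by never starting from an arbitrary non-modular algebra. Instead one works in $\al F \times \al F$, where $\al F$ is the free algebra on $\{x,y\}$; the two projection kernels permute automatically, so the product decomposition is given from the outset. Non-modularity is encoded by the single statement $((y,x),(y,y)) \notin \Cg\{((x,x),(x,y))\}$, and a Zorn's-lemma maximization of an equivalence $\eta$ on $F$ produces a congruence $\gamma$ with $\gamma^p \in \{\eta, 1_F\}$ for every $p$. Two short claims establish this dichotomy directly, so ``specialness'' falls out of the construction rather than from any minimization over finite algebras. A final quotient by $\gamma^y$ yields a very special pentagon. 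Note that local finiteness is not used anywhere in this argument; the paper's point is precisely that McGarry's theorem extends to all idempotent varieties.
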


Throughout this proof we will work with several variants of special and
very special pentagons $\rel P$ defined on a~product $A \times B$ for two algebras $\al
A$, $\al B$ in our idempotent variety, and in particular with the corresponding
relations $\gamma^{\rel P}$.
We say that $\gamma$ is a~\emph{modularity blocker} if $\rel P =
(A\times B; \alpha, \beta, \gamma)$ is a~special pentagon given that $\alpha$
and $\beta$ are kernels of the two projections, and $\gamma$ is a~\emph{special
modularity blocker} if $\rel P$ is a~very special pentagon.

The first step of the proof coincides with McGarry's proof. Nevertheless, we
present an alternative proof for completeness.

\begin{lemma}
Let $\mathcal V$ be an~idempotent variety, and $\al F$ the free algebra in $\var
V$ generated by $\{x,y\}$. Then $\mathcal V$ is congruence modular if and only if
\[
  \big((y,x),(y,y)\big) \in \Cg_{\al F\times \al F} \{ \big((x,x),(x,y)\big) \}.
    \eqno{(*)}
\]
\end{lemma}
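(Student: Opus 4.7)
The plan is to connect Day's Mal'cev characterization (Theorem~\ref{thm:day}) with the structure of principal congruences on $\al F \times \al F$, handling each direction by a separate argument.

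For the forward direction, I work inside the congruence lattice of $\al F \times \al F$, which is modular since $\al F \times \al F \in \var V$. Consider three congruences: the kernels $\alpha$, $\beta$ of the two coordinate projections, and $\gamma := \Cg_{\al F \times \al F}\{((x,x),(x,y))\}$. Since the generator of $\gamma$ has common first coordinate $x$, we have $\gamma \leq \alpha$; and $\alpha \meet \beta = 0$ because an element of $\al F \times \al F$ is determined by its pair of projections. The target pair $((y,x),(y,y))$ lies in $\alpha$ (both first coordinates equal $y$) and in $\beta \join \gamma$ via the short chain $(y,x) \mathrel{\beta} (x,x) \mathrel{\gamma} (x,y) \mathrel{\beta} (y,y)$. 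The modular law applied to $\gamma \leq \alpha$ then gives
\[
  \alpha \meet (\beta \join \gamma) = \gamma \join (\alpha \meet \beta) = \gamma,
\]
and hence $((y,x),(y,y)) \in \gamma$, which is $(*)$.

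For the converse, the goal is to extract Day terms from a Mal'cev chain witnessing $(*)$. The standard description of principal congruences produces a sequence $(y,x) = e_0, e_1, \dots, e_m = (y,y)$ in $\al F \times \al F$ together with terms $t_i$ of $\var V$ and parameter tuples $\bar c_i \in (F \times F)^{k_i}$ such that $\{e_i, e_{i+1}\} = \{t_i((x,x), \bar c_i),\, t_i((x,y), \bar c_i)\}$ for every $i$. Projecting coordinatewise and writing each entry of $\bar c_i$ as a term in the free generators $x, y$ yields tuples $\bar A_i(x,y)$ and $\bar B_i(x,y)$ for which the identity $y \equals t_i(x, \bar A_i(x,y))$ holds in $\var V$ (all first coordinates of the chain equal $y$) and \(\{e_i^{(2)}, e_{i+1}^{(2)}\} = \{t_i(x, \bar B_i(x,y)),\, t_i(y, \bar B_i(x,y))\}\), with $e_0^{(2)} = x$ and $e_m^{(2)} = y$. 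I will then build quaternary Day terms $d_0, \dots, d_n$ from the $t_i$ by substituting into the first argument of $t_i$ a variable alternating with $i$ among $\{x,y,z,w\}$ and into the remaining arguments a tuple of quaternary terms interpolating between $\bar A_i$ and $\bar B_i$, so that the boundary identities $d_0 \equals x$ and $d_n \equals w$ and the intermediate Day identities fall out of $y \equals t_i(x, \bar A_i)$ together with the equalities between successive $e_i^{(2)}$.

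The hardest step will be aligning Day's parity-dependent identities with the combinatorial form of the Mal'cev chain, which does not a~priori distinguish even from odd steps. I expect to handle this by preprocessing the chain---inserting trivial repeated steps so that consecutive swaps match Day's even/odd pattern---and using idempotence of $\var V$ to absorb unused parameters and reduce the generic equations to the specific diagonals $(x,y,y,z)$, $(x,x,y,y)$, and $(x,y,y,x)$ appearing in Theorem~\ref{thm:day}(2).
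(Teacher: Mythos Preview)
Your forward direction is identical to the paper's.

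For the converse the paper argues contrapositively: it works in the four-generated free algebra $\al F_4$ with congruences $\alpha=\Cg\{(x_0,x_3),(x_1,x_2)\}$, $\beta=\Cg\{(x_0,x_1),(x_2,x_3)\}$, $\gamma=\Cg\{(x_1,x_2)\}$, quotes Day's analysis to get $(x_0,x_3)\notin\gamma\join(\alpha\meet\beta)$ when $\var V$ is not modular, and then pushes this forward along the surjection $h\colon\al F_4\to\al F\times\al F$ defined by $x_0\mapsto(y,x)$, $x_1\mapsto(x,x)$, $x_2\mapsto(x,y)$, $x_3\mapsto(y,y)$, whose kernel is exactly $\alpha\meet\beta$. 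Your direct construction is the dual manoeuvre: lift the Mal'cev chain back through $h$ and read off Day terms. Both arguments hinge on the same fact, namely that $h$ is onto, and this is precisely your ``interpolation between $\bar A_i$ and $\bar B_i$''; that step, not the parity bookkeeping, is where idempotence does the real work. Concretely, for $(a,b)\in F\times F$ the quaternary term $a\bigl(b(x_1,x_2),b(x_0,x_3)\bigr)$ is sent by $h$ to $(a(x,y),b(x,y))$ thanks to idempotence. With the lifts $\bar C_i$ in hand, the Day chain alternates between $t_i(x_1,\bar C_i)$ and $t_i(x_2,\bar C_i)$ (so the first argument ranges only over the two \emph{middle} variables, not all four), and the odd-step identities hold because consecutive lifts have the same $h$-image, i.e., differ by an element of $\ker h=\alpha\meet\beta$; the first-coordinate identity $y\equals t_i(x,\bar A_i)$ is then absorbed into this and need not be tracked separately. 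The paper's route is shorter since it uses the $\al F_4$ analysis as a black box; yours makes the Day terms explicit at the cost of more bookkeeping.
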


\begin{proof}
Given that $\var V$ is congruence modular, one obtains $(*)$ from the modularity
law applied on the two kernels of projections and the congruence on the right
hand side of $(*)$.

To show that $(*)$ also implies congruence modularity, we will prove that $(*)$
is not true in any $\mathcal V$ that is idempotent and not congruence modular.
First, we consider the free algebra $\al F_4$ in $\var V$ generated by the
four-element set $\{x_0,x_1,x_2,x_3\}$ and its congruences $\alpha = \Cg
\{(x_0,x_3),(x_1,x_2)\}$, $\beta = \Cg\{(x_0,x_1),(x_2,x_3)\}$, and $\gamma = \Cg\{ (x_1,x_2) \}$.
Note that this is the structure $\rel F$ that has been used in the proof of
Proposition~\ref{prop:coloring-for-modularity}. Either from this proposition, or
by a~standard argument from the proof of Day's result, we obtain that $\alpha$,
$\beta$, and $\gamma$ don't satisfy the modularity law, and in particular,
$(x_0,x_3) \notin \gamma \join (\alpha \meet \beta)$.

Next, we shift this property to the second power of the two-generated free
algebra; consider the homomorphism $h\colon \al F_4 \to \al F \times \al F$
defined on the generators by
$x_0 \mapsto (y,x)$, $x_1 \mapsto (x,x)$, $x_2 \mapsto (x,y)$, and $x_3 \mapsto (y,y)$.
The homomorphism $h$ is surjective, since for every two binary idempotent terms
$t,s$ we have
\(
  h\big( t(s(x_1,x_2),s(x_0,x_3)) \big) = \big( t(x,y), s(x,y) \big)
\).
Finally, since the kernel of $h$ is $\alpha \meet \beta$, we get that 
\[
  h^{-1}\big(\Cg_{\al F\times \al F} \{((x,x),(x,y))\}\big)
    = \gamma \join (\alpha \meet \beta) \notni (x_0,x_3),
\]
and consequently $((y,x),(y,y)) = (h(x_0),h(x_3)) \notin \Cg_{\al F\times \al F}
\{((x,x),(x,y))\}$.
\end{proof}

\begin{lemma} \label{lem:idempotent-modularity-blocker}
Let $\mathcal V$ be an~idempotent variety which is not congruence modular, and
$\al F = \al F_{\mathcal V} (x,y)$. Then there is a~modularity blocker $\gamma$
in $\al F\times \al F$ such that $((x,x),(x,y)) \in \gamma$ and $((y,x),(y,y))
\notin \gamma$,
\end{lemma}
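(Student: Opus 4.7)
The plan is to take the principal congruence $\theta := \Cg_{\al F \times \al F}\{((x,x),(x,y))\}$ supplied by the previous lemma, observe that it already makes a pentagon together with the kernels $\alpha,\beta$ of the two projections, and then refine it to a \emph{special} pentagon while keeping the required pair memberships. Note at the outset that $\theta \subseteq \alpha$ (the generator lies in $\alpha$) and $((y,x),(y,y)) \notin \theta$ by the previous lemma.

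First I would verify the pentagon conditions for $(\al F \times \al F; \alpha, \beta, \theta)$. The conditions $\alpha \meet \beta = 0$ and $\alpha \circ \beta = 1$ are immediate for kernels of projections on a product, and $\theta < \alpha$ follows from $((y,x),(y,y)) \in \alpha \setminus \theta$. The only nontrivial point is $\theta \join \beta = 1$: the fiber $\theta^x = \{(b,c) : ((x,b),(x,c)) \in \theta\}$ is a congruence of $\al F$ (applying any $n$-ary operation to $n$ copies of a pair over $x$ uses idempotency to keep the first coordinate at $x$), it contains $(x,y)$, and since $\al F$ is generated by $\{x,y\}$, it equals $1_F$. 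With $\theta^x = 1_F$, any two elements of $F \times F$ can be joined by $\beta$-steps to first coordinate $x$ and a single $\theta$-step there.

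Next I would introduce $U := \{a \in F : \theta^a = 1_F\}$ and show it is a subuniverse of $\al F$ containing $x$ and excluding $y$; the proof is the same idempotency trick. The real goal is then to produce a congruence $\eta < 1_F$ of $\al F$ with $(x,y) \notin \eta$ such that
\[
  \gamma := \{((a,b),(a,c)) : a \in U\} \cup \{((a,b),(a,c)) : a \notin U,\ (b,c) \in \eta\}
\]
is a congruence of $\al F \times \al F$. If such an $\eta$ exists, $\gamma$ is a modularity blocker: its fibers are exactly $1_F$ over $U$ and $\eta$ over $F \setminus U$; the pentagon identities transfer using $x \in U$; $\gamma < \alpha$ since $\eta \neq 1_F$; and $((x,x),(x,y)) \in \gamma$, $((y,x),(y,y)) \notin \gamma$ follow from $x \in U$, $y \notin U$, and $(x,y) \notin \eta$.

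The main obstacle is defining $\eta$ so that $\gamma$ is actually a congruence while keeping $(x,y) \notin \eta$. The naive candidates $\eta = \theta^y$ or $\eta = \bigcap_{a \notin U} \theta^a$ typically fail the compatibility condition: applying a term to pairs whose first coordinates mix elements of $U$ (where the second coordinates are arbitrary) and of $F \setminus U$ (where they are only $\eta$-related) can yield an output whose first coordinate is in $F \setminus U$ but whose second coordinates need not be $\eta$-related. My plan is to handle this either (i) by choosing $\gamma$ maximal in $\{\delta : \theta \subseteq \delta \subseteq \alpha,\ ((y,x),(y,y)) \notin \delta\}$ by Zorn's lemma and using maximality to show that the fibers take at most two values (otherwise two distinct non-full fibers could be unified without introducing $((y,x),(y,y))$, contradicting maximality), or (ii) by defining $\eta$ as the smallest congruence of $\al F$ closed under the pairs that compatibility with $U$ forces and then proving $(x,y) \notin \eta$ via an argument that exploits in an essential way the failure of congruence modularity of $\var V$. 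The delicate point in either approach is exactly this last exclusion.
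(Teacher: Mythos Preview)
Your plan (i) is close to what the paper does, but the object you maximize is wrong, and this is exactly the point where your sketch becomes handwavy. The paper does \emph{not} maximize $\gamma$ among congruences in $[\theta,\alpha]$ avoiding $((y,x),(y,y))$. Instead it maximizes an equivalence $\eta$ on $F$ subject to
\[
  ((y,x),(y,y)) \notin \gamma_0 \vee \Cg\bigl\{\,((y,a),(y,b)) : (a,b)\in\eta\,\bigr\},
\]
and then \emph{defines} $\gamma$ to be the right-hand congruence. The two claims are: (1) $\gamma^p \ge \eta$ for every $p$ (your idempotency argument), and (2) if $\gamma^p > \eta$ then $\gamma^p = 1_F$. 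For (2) one uses the endomorphism $e = e'\times 1$ with $e'(x)=x$, $e'(y)=p$ and sets $\gamma_1 = e^{-1}(\gamma)$; then $\gamma_1^y = \gamma^p > \eta$, and since $\gamma^p$ is an equivalence strictly above $\eta$, maximality of $\eta$ forces $((y,x),(y,y)) \in \gamma_0 \vee \Cg\{((y,a),(y,b)) : (a,b)\in\gamma^p\} \subseteq \gamma_1$, whence $((p,x),(p,y))\in\gamma$ and $\gamma^p=1_F$.

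The reason your maximization of $\gamma$ does not obviously yield (2) is that the natural comparison object $e^{-1}(\gamma)\cap\alpha$ need not contain $\gamma$; it is only another member of your poset, possibly incomparable with $\gamma$, so maximality of $\gamma$ tells you nothing directly. Your parenthetical ``two distinct non-full fibers could be unified without introducing $((y,x),(y,y))$'' is precisely the statement that needs proof, and it is not clear how to get it from maximality of $\gamma$ alone. By contrast, maximizing $\eta$ works because $\gamma^p$ is itself an equivalence on $F$, hence a candidate in the same poset as $\eta$, and strict containment $\gamma^p > \eta$ immediately puts $\gamma^p$ outside that poset.

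Two smaller points. First, your preliminary work (verifying the pentagon for $\theta$, introducing $U=\{a:\theta^a=1_F\}$, and then searching for $\eta$ with that fixed $U$) is not needed and is in fact too rigid: the set $\{a:\gamma^a=1_F\}$ for the final $\gamma$ may well be strictly larger than your $U$. The paper never fixes $U$ in advance. Second, approach (ii) as stated is too vague to assess; the ``argument that exploits in an essential way the failure of congruence modularity'' is exactly what has to be supplied, and the paper's route via maximality of $\eta$ plus the endomorphism pullback is how this is actually done.
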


\begin{proof}
Let $\gamma_0 = \Cg_{\al F\times \al F} \{ \big((x,x),(x,y)\big) \}$. From the previous
lemma, we know that $((y,x),(y,y)) \notin \gamma_0$. Let $\eta$ be a~maximal
equivalence relation on $F$ such that
\[
  \big((y,x),(y,y)\big) \notin
    \gamma_0 \join \Cg \{ \big((y,a),(y,b)\big) : (a,b) \in \eta \}
\]
(such equivalence exists from Zorn's lemma). We claim that the equivalence on
the right-hand side, let us call it $\gamma$, is a~modularity blocker. This is
proven in two steps:

{\def\qedsymbol{$\triangle$}
\begin{claim} $\gamma^p \geq \eta$ for all $p\in F$.
\end{claim}

Let $f$ be binary term such that $f(x,y) = p$.
Observe that $\gamma^x = 1_F \geq \eta$ and $\gamma^y = \eta$, therefore we
obtain that
\(
  \big((p,a),(p,b)\big) =
    \big(f\big((x,a),(y,a)\big),f\big((x,b),(y,b)\big)\big) \in \eta
\)
for all
$(a,b) \in \eta$. Which shows that $\gamma^p \geq \eta$. \qed

\begin{claim} If $\gamma^p > \eta$ for some $p\in F$ then $\gamma^p = 1_F$.
\end{claim}

Let $e = e' \times 1_{\al F}$, where $e'\colon \al F \to \al F$ is the
homomorphism defined by $x\mapsto x$ and $y\mapsto p$, and consider the
congruence $\gamma_1 = e^{-1}(\gamma)$. From this definition, we obtain that
$\gamma_1^y \geq \gamma^p > \eta$ and moreover $\gamma_1 \geq \gamma_0$ since
$(e(x,x),e(x,y)) = ((x,x),(x,y)) \in \gamma$.  Therefore, from the maximality
of $\eta$, we obtain that $((y,x),(y,y)) \in \gamma_1$, and as a~consequence
thereof, $((p,x),(p,y)) = (e(y,x),e(y,y)) \in \gamma$. This shows that
$\gamma^p = 1_\al F$ and completes the proof of the second claim. \qed
}

By combining both claims, we get that $\gamma^p \in \{ \eta, 1_F \}$, therefore
$\gamma$ is a~modularity blocker.
\end{proof}
The above proof was inspired by the proof of Lemma 2.8
from~\cite{kearnes.tschantz07}.

\begin{corollary} \label{cor:idempotent-special-pentagon}
  Every idempotent variety that is not congruence modular is
  compatible with a~very special pentagon.
\end{corollary}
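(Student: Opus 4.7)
The plan is to start from the modularity blocker guaranteed by Lemma~\ref{lem:idempotent-modularity-blocker} and then quotient out its ``thin'' part to upgrade the associated special pentagon to a very special one. More concretely, let $\var V$ be an idempotent variety that is not congruence modular, and set $\al F = \al F_{\var V}(x,y)$. Lemma~\ref{lem:idempotent-modularity-blocker} supplies a modularity blocker $\gamma$ on $\al F \times \al F$; by definition of a special pentagon, there exists $\eta < 1_F$ with $\{ \gamma^a : a \in F \} = \{ 1_F, \eta \}$. Since $\eta$ is a proper congruence of $\al F$, the quotient $\al F/\eta$ is a nontrivial algebra in $\var V$.

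Next, I would form the candidate algebra $\al B = \al F \times (\al F/\eta) \in \var V$ and define a relation $\gamma'$ on its universe by
\[
  \bigl( (a, [b]), (a, [c]) \bigr) \in \gamma'
  \iff \bigl( (a, b), (a, c) \bigr) \in \gamma.
\]
This is well-defined because for every fixed $a$ the congruence $\gamma^a$ is either $1_F$ or $\eta$, hence always contains $\eta$, so membership in $\gamma$ depends only on the $\eta$-classes of $b$ and $c$. In fact, $\gamma'$ is the image of $\gamma$ under the natural surjection $\al F \times \al F \to \al B$ whose kernel $\{ ((a,b),(a,c)) : (b,c) \in \eta \}$ lies inside $\gamma$ by the same observation; therefore $\gamma'$ is a congruence of $\al B$.

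Finally, I would check that $\al B$ is compatible with the very special pentagon $(B;\alpha,\beta,\gamma')$, where $\alpha$ and $\beta$ are the kernels of the two projections from $B$. A direct unpacking of definitions yields $\gamma'^a = 1_{F/\eta}$ when $\gamma^a = 1_F$ and $\gamma'^a = 0_{F/\eta}$ when $\gamma^a = \eta$; both values occur because both occur in $\{ \gamma^a \}$, and $0_{F/\eta} < 1_{F/\eta}$ strictly since $\eta < 1_F$ forces $|F/\eta| \geq 2$. The pentagon axioms $\alpha \meet \beta = 0_B$ and $\alpha \circ \beta = 1_B$ are immediate from the product structure, while $\gamma' \join \beta = 1_B$ and $\gamma' < \alpha$ follow by routing paths through some $a_*$ with $\gamma'^{a_*} = 1_{F/\eta}$ and by choosing some $a^*$ with $\gamma'^{a^*} = 0_{F/\eta}$, respectively. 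I do not anticipate any real obstacle beyond these bookkeeping checks: the essential work has already been done in Lemma~\ref{lem:idempotent-modularity-blocker}, and this final step merely collapses the non-full fibers of $\gamma$ to make the blocker very special.
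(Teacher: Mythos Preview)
Your approach is exactly the paper's: quotient the second factor by $\eta$ and push $\gamma$ forward along $\al F\times\al F\to\al F\times(\al F/\eta)$. The one point you gloss over is the assertion that $\eta$ is a \emph{congruence} of $\al F$; in Lemma~\ref{lem:idempotent-modularity-blocker} it is only introduced as an equivalence relation. The paper fills this using idempotence: since $\var V$ is idempotent, $\{y\}\times\al F$ is a subalgebra of $\al F^2$ isomorphic to $\al F$, and $\eta=\gamma^y$ is the restriction of the congruence $\gamma$ to that subalgebra, hence itself a congruence. Once you add that sentence, your argument is complete and matches the paper's.
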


\begin{proof}
Following the notation of the previous lemma, we know that $(F\times F,\alpha,
\beta,\gamma)$ is a~special pentagon for kernels of projections $\alpha$,
$\beta$.
To obtain a~very special pentagon $\rel P$, first observe that $\gamma^y$ is
a~congruence on $\al F$: by idempotence we get that $\{y\} \times \al F$ is
a~subalgebra of $\al F^2$ isomorphic to $\al F$, and $\gamma^y$ is the image of
the restriction of $\gamma$ to $\{y\} \times \al F$ under this isomorphism.
We set $\al A = \al F$, $\al B = \al F / \gamma^y$, and $P = A \times B$.
Finally, the relation $\gamma^{\rel P}$ is defined as the image of $\gamma$
under the natural epimorphism from $\al F^2$ to $\al A\times \al B$.
\end{proof}

The next step is to show that we can increase the size of compatible very
special pentagons. In other words, that every variety compatible with a~very
special pentagon is compatible with $\rel P_\kappa$'s for all big enough
cardinals $\kappa$.

\begin{proposition} \label{prop:huge-pentagons}
If a~variety $\var V$ contains an algebra compatible with a~very special
pentagon $\rel P$, then it contains an algebra compatible with $\rel P_\kappa$
for all $\kappa \geq \aleph_0 + |P|$.
\end{proposition}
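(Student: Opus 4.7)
The plan is to start from an algebra $\al C \in \var V$ compatible with $\rel P = (A \times B; \alpha, \beta, \gamma)$, decompose it into factors, blow up each factor using the construction of Section~\ref{sec:tarski-construction}, and reassemble. The pentagon axioms give $\alpha \meet \beta = 0_P$ and $\alpha \circ \beta = 1_P$, so $\alpha, \beta$ are complementary factor congruences of $\al C$, and $\al C$ splits as an internal direct product $\al A \times \al B$ with $\al A = \al C/\alpha$ and $\al B = \al C/\beta$, both in $\var V$, with operations componentwise. Because $\rel P$ is very special, $\gamma$ is determined by a subset $U \subseteq A$ via $((a,b),(a,c)) \in \gamma$ if and only if $a \in U$ or $b = c$. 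Compatibility of $\gamma$ with the componentwise operations of $\al C$ unpacks as the following coupling condition $(*)$: for every basic operation $f$ and all tuples $\tup a \in A^n$, $\tup b, \tup c \in B^n$ with $a_i \in U$ or $b_i = c_i$ for each $i$, one has $f^{\al A}(\tup a) \in U$ or $f^{\al B}(\tup b) = f^{\al B}(\tup c)$.

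Next, apply the Tarski construction in parallel to $(\al A, U)$ and to $\al B$, producing $\al A_\lambda \in \var V$ together with $U_\lambda \subseteq A_\lambda$, and $\al B_\lambda \in \var V$. The cardinality estimates from Section~\ref{sec:tarski-construction} let us arrange, for any $\lambda$ with $|\lambda| = \kappa \geq \aleph_0 + |P|$, that $|U_\lambda| = |A_\lambda \setminus U_\lambda| = |B_\lambda| = \kappa$. Set $\al C_\lambda := \al A_\lambda \times \al B_\lambda \in \var V$ and equip its universe with the two projection kernels $\alpha_\lambda$, $\beta_\lambda$ together with
\[
  \gamma_\lambda := \{ ((a, b), (a, c)) : a \in U_\lambda \text{ or } b = c \}.
\]
Any bijections $A_\lambda \to \kappa$ sending $U_\lambda$ to $U_\kappa$ and $B_\lambda \to \kappa$ induce an isomorphism of this relational structure onto $\rel P_\kappa$, so once $\al C_\lambda$ is shown to be compatible with $(\alpha_\lambda, \beta_\lambda, \gamma_\lambda)$ the proof is complete.

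The substantive step is verifying that $\gamma_\lambda$ is a congruence of $\al C_\lambda$; the projection kernels $\alpha_\lambda$ and $\beta_\lambda$ are congruences for free. This amounts to showing that the coupling condition $(*)$ descends to $(\al A_\lambda, \al B_\lambda, U_\lambda)$, which I would prove by transfinite induction on $\lambda$. At a successor stage the identifications $\al A_{\lambda+1} \cong \al A_\lambda^2$, $U_{\lambda+1} \cong A_\lambda \times U_\lambda$, and $\al B_{\lambda+1} \cong \al B_\lambda^2$ let one reduce $(*)$ at level $\lambda+1$ to two applications of $(*)$ at level $\lambda$, one on each pair of coordinates, and then combine the disjunctive conclusions; the limit stage follows by taking unions.

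The main obstacle is precisely this inductive preservation of $(*)$: one has to ensure that the delicate coupling between the distinguished subset $U \subseteq A$ and the operations on $\al B$ implicit in the compatibility of $\gamma$ survives Tarski's simultaneous doubling of $\al A$ and $\al B$. Once $(*)$ is secured at level $\kappa$, the remainder is cardinality bookkeeping and the identification with $\rel P_\kappa$.
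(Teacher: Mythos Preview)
Your proposal is correct and follows essentially the same route as the paper: decompose $\al C$ as $\al A\times\al B$, run Tarski's construction in parallel on $(\al A,U)$ and $\al B$, define $\gamma_\lambda$ from $U_\lambda$, verify by transfinite induction that $\gamma_\lambda$ is a congruence, and finish with the cardinality bijections onto $\rel P_\kappa$. The only stylistic difference is in the successor step: where you isolate the coupling condition $(*)$ and propagate it by two applications and a case split on the disjunction, the paper observes directly that $\gamma_{\lambda+1}$ is pp-definable from $\gamma_\lambda$ and equality (namely $a_1=a_1'$, $a_2=a_2'$, $((a_2,b_1),(a_2',b_1'))\in\gamma_\lambda$, $((a_2,b_2),(a_2',b_2'))\in\gamma_\lambda$), hence automatically compatible---this sidesteps the disjunctive bookkeeping but is logically the same argument.
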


\begin{proof}
  Let $\al A$ and $\al B$ be two algebras in $\var V$ such that $\rel P$ is
  compatible with $\al A \times \al B$, let $U\subseteq A$ denote the set
  $\{ a\in A : \gamma^a = 1_B \}$, and let $\kappa \geq \aleph_0 + |P|$ be
  a~cardinal.

  We will construct a~pentagon which is isomorphic to $\rel P_\kappa$ by
  a~variation on Tarski's construction described in
  Section~\ref{sec:tarski-construction}.  Let $\al A_\lambda$ and $U_\lambda$ be
  the algebra and its subset obtained by the construction from $\al A$ and $U$
  in $\lambda$ steps, and let $\al B_\lambda$ be the algebra obtained by the
  construction from $\al B$ in $\lambda$ steps. We define an~equivalence
  relation $\gamma_\lambda$ on $A_\lambda \times B_\lambda$ by
  \[
    \gamma_\lambda = \{ ((a,b), (a',b')) :
      \text{$a = a'$ and if $a\not\in U_\lambda$ then $b = b'$} \}.
  \]
  We will prove that $\gamma_\lambda$ is a~congruence of $\al A_\lambda\times
  \al B_\lambda$ by a~transfinite induction: $\gamma_0 = \gamma$ is
  a~congruence. For the induction step, suppose that $\gamma_\lambda$ is
  a~congruence. The algebras $\al A_{\lambda + 1}$ and $\al B_{\lambda +1}$ are
  defined as being isomorphic to the second powers of $\al A_\lambda$ and $\al
  B_\lambda$ respectively. Let us suppose that they are in fact the second
  powers themselves and $U_{\lambda+1} = A_\lambda \times U_\lambda$. Then
  $\gamma_{\lambda+1}$ can be described as relating those pairs of pairs
  $\big((a_1,a_2),(b_1,b_2)\big)$ and $\big((a'_1,a'_2),(b'_1,b'_2)\big)$ that satisfy:
  $a_1 = a'_1$, $a_2 = a'_2$, and if $a_2\not\in U_\lambda$ then $b_1 =
  b'_1$ and $b_2 = b'_2$. Rewriting this condition, we get
  \begin{multline*}
    \big(\big((a_1,a_2),(b_1,b_2)\big),
    \big((a'_1,a'_2),(b'_1,b'_2)\big)\big)
    \in \gamma_{\lambda+1}
    \text{ if and only if }
    \\
    a_1 = a'_1  \text{, }
    a_2 = a'_2  \text{, }
    \big((a_2,b_1),(a'_2,b'_1)\big) \in \gamma_\lambda \text{, and }
    \big((a_2,b_2),(a'_2,b'_2)\big) \in \gamma_\lambda
  \end{multline*}
  which is clearly compatible with the operations, since $\gamma_\lambda$ is.
  Also note, that if we restrict $\gamma_{\lambda+1}$ to $\al A_{\lambda} \times
  \al B_{\lambda}$, we get $\gamma_\lambda$. For limit ordinals $\lambda$, the
  compatibility of $\gamma_\lambda$ is obtained by a~standard compactness
  argument.
  This way, we obtain for every $\lambda$ a~very special pentagon $\rel
  P'_\lambda$ with $P'_\lambda = A_\lambda \times B_\lambda$ and $\gamma^{\rel
  P_\lambda} = \gamma_\lambda$.

  The final step is to prove that the pentagon $\rel P'_\lambda$ is isomorphic
  to $\rel P_\kappa$ for $\kappa = |\lambda|$. Indeed, $P_\kappa = \kappa \times
  \kappa$, $P'_\lambda = A_\lambda \times B_\lambda$ with $|A_\lambda| =
  |B_\lambda| = \kappa$. Also compare
  \[
    \gamma^{\rel P_\kappa} = \{ ((a,b),(a',b')) :
      \text{ $a = a'$ and if $a\not\in U_\kappa$ then $b = b'$ } \}
  \]
  with the above definition of $\gamma_\lambda$. From these observations, it is
  immediate that any bijection $a\times b\colon \kappa \times \kappa \to
  A_\lambda \times B_\lambda$ defined by components $a$ and $b$ such that $a$
  maps $U_\kappa$ onto $U_\lambda$ is an isomorphism between $\rel P_\kappa$ and
  $\rel P'_\lambda$.
\end{proof}

\subsection{Proof of Theorem~\ref{thm:overview-introduction}(i)}
  Given a~variety $\var V$ that is not congruence modular, we distinguish
  between two cases: $\var V$ is linear; and $\var V$ is idempotent. If it is
  the first case, we get from Proposition~\ref{prop:coloring-for-modularity} and
  Lemma~\ref{lem:linear-varieties} that $\var V$ contains an algebra compatible
  with a~very special pentagon $\rel P^{\var V} = \rel P_0$. In the second case,
  $\var V$ is idempotent, we obtain a~very special pentagon $\rel P^{\var V}$
  compatible with an~algebra from $\var V$ by
  Corollary~\ref{cor:idempotent-special-pentagon}.

  Given that both $\var V$ and $\var W$ are not congruence modular, and either
  linear, or idempotent, we have very special pentagons $\rel P^{\var V}$ and
  $\rel P^{\var W}$ compatible with algebras from the corresponding varieties.
  Choosing an infinite cardinal $\kappa$ larger than the sizes of both $P^{\var
  V}$ and $P^{\var W}$.  Proposition~\ref{prop:huge-pentagons} yields that both
  varieties contain algebras compatible with the pentagon $\rel P_\kappa$,
  therefore by Lemma~\ref{lem:joins}, the interpretability join $\var V \join
  \var W$ contains an algebra compatible with $\rel P_\kappa$ witnessing that
  $\var V \join \var W$ is not congruence modular. \qed

\subsection{Cofinal chain}

Here we investigate properties of the transfinite sequence
\(\clo P_0\), \(\clo P_{\aleph_0}\), \(\clo P_{\aleph_1}, \dots\)
of polymorphism clones of pentagons $\rel P_0, \rel P_{\aleph_0}, \dots$;
in particular, we show that this sequence form a~strictly increasing chain in
the lattice of clones, and as a~corollary thereof, we obtain that there is no
maximal (in the interpretability order) idempotent variety that is not
congruence modular. For the rest of this section, $\kappa$ and $\lambda$ will
denote either infinite cardinals, or $0$.

The fact that the chain is increasing follows from the following corollary of
Proposition~\ref{prop:huge-pentagons}.

\begin{corollary} \label{cor:modularity-chain}
There is a~clone homomorphisms from $\clo P_\lambda$ to $\clo P_\kappa$ for all
$\lambda \leq \kappa$.
\end{corollary}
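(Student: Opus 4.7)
The plan is to derive this corollary directly from Proposition~\ref{prop:huge-pentagons} together with Lemma~\ref{lem:2.1}, with essentially no new ideas required. The case $\lambda = \kappa = 0$ is handled by the identity clone homomorphism, so assume $\kappa$ is infinite.

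First I would observe that for every admissible index, $\rel P_\lambda$ is (isomorphic to) a very special pentagon: for $\lambda = 0$ this is the remark following Definition~\ref{def:pentagon_0}, and for infinite $\lambda$ it is immediate from Definition~\ref{def:pentagon_kappa} (taking $\eta = 0_\lambda$, with $U_\lambda$ playing the role of the set on which $\gamma^a$ equals $1$). Next, I would note that the natural algebra $\al P_\lambda = (P_\lambda, (f)_{f\in \clo P_\lambda})$ is a generator of the variety $\var V_\lambda$ of actions of $\clo P_\lambda$, and is compatible with $\rel P_\lambda$ by the very definition of $\clo P_\lambda$ as the polymorphism clone of $\rel P_\lambda$.

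Now I would apply Proposition~\ref{prop:huge-pentagons} to $\var V_\lambda$ and the very special pentagon $\rel P_\lambda$: it yields, for every cardinal $\kappa \geq \aleph_0 + |P_\lambda|$, an algebra in $\var V_\lambda$ that is compatible with $\rel P_\kappa$. The hypothesis $\lambda \leq \kappa$ guarantees that $\kappa \geq \aleph_0 + |P_\lambda|$, since $|P_0| = 4$ and $|P_\lambda| = \lambda$ for infinite $\lambda$, and $\kappa$ is assumed infinite.

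Finally I would apply the equivalence $(3) \Leftrightarrow (1)$ of Lemma~\ref{lem:2.1} with $\var V = \var V_\lambda$ and $\rel B = \rel P_\kappa$, obtaining a clone homomorphism from the clone of the free algebra of $\var V_\lambda$ into $\clo P_\kappa$. Since $\al P_\lambda$ generates $\var V_\lambda$, the clone of its term operations (which is $\clo P_\lambda$ itself) is isomorphic as an abstract clone to the free clone of $\var V_\lambda$, as recorded in Section~\ref{sec:2.1}. Composing this isomorphism with the homomorphism just produced yields the required clone homomorphism $\clo P_\lambda \to \clo P_\kappa$. No step here should cause difficulty; all the substantive work has already been done in Proposition~\ref{prop:huge-pentagons}.
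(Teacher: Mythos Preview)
Your proposal is correct and follows essentially the same approach as the paper's proof: apply Proposition~\ref{prop:huge-pentagons} to the variety of actions of $\clo P_\lambda$ to obtain an algebra compatible with $\rel P_\kappa$, and then convert this into a clone homomorphism $\clo P_\lambda \to \clo P_\kappa$ (the paper phrases the latter step as ``such an algebra corresponds to an action of $\clo P_\lambda$ on $\rel P_\kappa$ by polymorphisms'', while you spell it out via Lemma~\ref{lem:2.1} and the identification in Section~\ref{sec:2.1}). Your explicit handling of the trivial case $\lambda = \kappa = 0$ and the verification of the cardinality hypothesis of Proposition~\ref{prop:huge-pentagons} are minor elaborations the paper leaves implicit.
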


\begin{proof}
  If we use Proposition~\ref{prop:huge-pentagons} on the variety of actions of
  $\clo P_\lambda$, we obtain that it contains an algebra compatible with $\rel
  P_\kappa$. But such an algebra corresponds to an~action of $\clo P_\lambda$ on
  $\rel P_\kappa$ by polymorphisms, therefore to a~clone homomorphism from $\clo
  P_\lambda$ to $\clo P_\kappa$.
\end{proof}

Before we proceed further, we need a~better description of polymorphisms of the
pentagons $\rel P_\kappa$.

\begin{lemma} \label{lem:compatibility-with-blocker}
Let $\kappa$ be an~infinite cardinal, and consider the pentagon $\rel P_\kappa$,
and let $f\colon P_\kappa^k \to P_\kappa$ be a~functions defined by components
$f^1,f^2\colon \kappa^k \to \kappa$. Then the following are equivalent:
\begin{enumerate}
  \item $f$ is a polymorphism of $\rel P_\kappa$,
  \item if $f^1 (a_1,\dots,a_n) \notin U$ for some $a_1$, \dots, $a_n \in
  \kappa$ then $f^2$ does not depend on any coordinate $i$ such that $a_i \in
  U$.
\end{enumerate}
\end{lemma}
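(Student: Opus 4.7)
The plan is to observe that the assumed componentwise form of $f$ already takes care of two of the three compatibility requirements, so the whole lemma reduces to unpacking what preservation of $\gamma^{\rel P_\kappa}$ means. Since $\alpha^{\rel P_\kappa}$ is the kernel of the first projection and $f^1$ depends only on the first coordinates of its arguments, $f$ automatically preserves $\alpha^{\rel P_\kappa}$; similarly $f$ preserves $\beta^{\rel P_\kappa}$ because $f^2$ depends only on the second coordinates. Hence (1) is equivalent to saying that $f$ preserves $\gamma^{\rel P_\kappa}$, and it suffices to prove that this last condition is equivalent to (2).

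Next I would translate preservation of $\gamma^{\rel P_\kappa}$ into explicit arithmetic. Suppose pairs $((a_i,b_i),(a_i,b'_i))$ lie in $\gamma^{\rel P_\kappa}$ for $i=1,\dots,k$; by definition this means that for each $i$, either $a_i\in U_\kappa$ or $b_i=b'_i$. The images under $f$ are
\[
  \bigl(f^1(a_1,\dots,a_k),\, f^2(b_1,\dots,b_k)\bigr)
  \quad\text{and}\quad
  \bigl(f^1(a_1,\dots,a_k),\, f^2(b'_1,\dots,b'_k)\bigr),
\]
which already agree in the first coordinate. Thus the images lie in $\gamma^{\rel P_\kappa}$ if and only if either $f^1(a_1,\dots,a_k)\in U_\kappa$, or $f^2(b_1,\dots,b_k)=f^2(b'_1,\dots,b'_k)$. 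Quantifying over all admissible choices, this reads: whenever $f^1(a_1,\dots,a_k)\notin U_\kappa$, the value $f^2(b_1,\dots,b_k)$ must be unchanged under any modification of the coordinates $i$ with $a_i\in U_\kappa$, which is exactly condition (2).

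For the direction (2) $\Rightarrow$ (1), the implication is immediate from the translation above. For (1) $\Rightarrow$ (2), I would argue by contrapositive: if (2) fails, there exist $a_1,\dots,a_k$ with $f^1(a_1,\dots,a_k)\notin U_\kappa$ and a coordinate $i_0$ with $a_{i_0}\in U_\kappa$ on which $f^2$ genuinely depends, so one can choose tuples $(b_i)$ and $(b'_i)$ that coincide on every coordinate $i\neq i_0$ yet satisfy $f^2(b_1,\dots,b_k)\neq f^2(b'_1,\dots,b'_k)$. The corresponding pairs $((a_i,b_i),(a_i,b'_i))$ then lie in $\gamma^{\rel P_\kappa}$ (using $a_{i_0}\in U_\kappa$ for $i=i_0$ and $b_i=b'_i$ otherwise), but the images do not, contradicting (1).

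There is no substantial obstacle here; the only thing to watch is that the witnessing tuples for the contrapositive genuinely exist, which is clear because the claim that $f^2$ depends on coordinate $i_0$ is exactly the statement that two such tuples differing only in coordinate $i_0$ can be produced. The proof is thus a direct bookkeeping argument once one isolates $\gamma^{\rel P_\kappa}$ as the only nontrivial relation to check.
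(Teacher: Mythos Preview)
Your proposal is correct and follows essentially the same approach as the paper: both note that the componentwise definition of $f$ automatically handles $\alpha^{\rel P_\kappa}$ and $\beta^{\rel P_\kappa}$, then unpack compatibility with $\gamma^{\rel P_\kappa}$ into the stated condition on $f^1$ and $f^2$. Your write-up is slightly more explicit in spelling out the contrapositive for $(1)\Rightarrow(2)$, whereas the paper simply says the reverse implication is obtained by ``reversing this argument,'' but the underlying reasoning is identical.
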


\begin{proof}
  $(1)\to(2)$\quad Since $f$ is defined component-wise, if is compatible with
  $\alpha^{\rel P_\kappa}$ and $\beta^{\rel P_\kappa}$. For compatibility with
  $\gamma^{\rel P_\kappa}$, recall that $((a,b),(a',b')) \in \gamma^{\rel
  P_\kappa}$ if and only if $a = a'$, and $a\in U$ or $b = b'$.  Therefore, if
  $f$ is compatible with $\gamma^{\rel P_\kappa}$ and $f^1 (a_1,\dots,a_n)
  \notin U$ then
  \(
    f^2 (b_1,\dots,b_n) =
    f^2 (b'_1,\dots,b'_n)
  \)
  for all $b_i$, $b_i'$ where $b_i = b_i'$ whenever $a_i \notin U$ (observe that
  $((a_i,b_i),(a_i,b_i'))\in \gamma^{\rel P_\kappa}$ for all $i$).  This means
  that $f^2$ does not depend on any coordinate $i$ with $a_i \in U$.
  The implication $(2) \to (1)$ is given by reversing this argument.
\end{proof}

Instead of proving that the chain of $\clo P_\kappa$'s is strictly increasing,
we prove the following stronger statement which will allow us to show that there
is no maximal (in the ordering by interpretability) idempotent non-modular
variety.

\begin{proposition} \label{prop:modularity-large-algebras}
  The idempotent reducts of $\clo P_\kappa$'s form a~strictly increasing chain
  in the lattice of clones.
\end{proposition}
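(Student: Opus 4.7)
The weakly-increasing part is immediate: Corollary~\ref{cor:modularity-chain} supplies a clone homomorphism $\clo P_\lambda \to \clo P_\kappa$ whenever $\lambda \leq \kappa$, and since clone homomorphisms preserve the idempotence identities $f(x,\dots,x) \equals x$, they restrict to clone homomorphisms $\idclo{P_\lambda} \to \idclo{P_\kappa}$ between the idempotent reducts. Hence the sequence $\idclo{P_0} \leq \idclo{P_{\aleph_0}} \leq \idclo{P_{\aleph_1}} \leq \dots$ is a chain in the clone lattice.

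To obtain the \emph{strict} increase, fix $\lambda < \kappa$ and suppose, for a contradiction, that a clone homomorphism $\xi\colon \idclo{P_\kappa} \to \idclo{P_\lambda}$ exists. By Lemma~\ref{lem:2.1}, the idempotent variety of actions of $\idclo{P_\kappa}$ would then contain an algebra $\al A$ on the universe $P_\lambda$ that is compatible with $\rel P_\lambda$; in particular $\al A$ carries a very special pentagon structure whose ``rich'' set is $U_\lambda$, of cardinality only $\lambda$.

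The heart of the argument is to derive a contradiction from the existence of such an $\al A$. My approach would be to leverage the explicit description of idempotent polymorphisms of $\rel P_\kappa$ from Lemma~\ref{lem:compatibility-with-blocker}, which encodes the ``rich'' set $U_\kappa$ of size $\kappa$, together with the fact that $\idclo{P_\kappa}$ contains a large family of mutually ``independent'' term operations indexed by points of $U_\kappa$. The main obstacle is to convert this abundance into a concrete witness identity that fails in $\idclo{P_\kappa}$ but must hold in $\idclo{P_\lambda}$ for cardinality reasons, presumably via a pigeonhole or cardinal-comparison argument exploiting the gap $|U_\kappa| = \kappa > \lambda = |U_\lambda|$. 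Pinpointing a notion of independence that is preserved by $\xi$ and survives the descent from $\rel P_\kappa$ to the smaller pentagon $\rel P_\lambda$ is the technical crux, and without it I would not see how to complete the proof.
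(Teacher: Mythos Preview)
Your handling of the increasing direction matches the paper exactly. For the strict direction, however, there are two issues. First, a small but telling slip: you want identities that \emph{hold} in $\idclo{P_\kappa}$ but \emph{fail} in $\idclo{P_\lambda}$, not the reverse---a clone homomorphism $\idclo{P_\kappa}\to\idclo{P_\lambda}$ transports satisfiable sets of identities forward, so the contradiction comes from exhibiting something satisfiable in the source but not in the target. Second, and more substantially, you explicitly leave the crux open: you do not produce the witness identities, and the vague plan of ``independence preserved by $\xi$'' is not a proof. Detouring through Lemma~\ref{lem:2.1} to get an algebra on $P_\lambda$ is fine, but it does not by itself yield a contradiction; you still need to show that the variety of actions of $\idclo{P_\kappa}$ has no nontrivial algebra of size $<\kappa$.

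The paper fills this gap by an explicit construction. It introduces binary symbols $f_i$ ($i\in\kappa$) and ternary symbols $p_{i,j},q_{i,j},r_{i,j}$ ($i\neq j$) subject to idempotence together with a short chain of identities that, in any model containing two distinct elements $a\neq b$, force all the values $f_i(a,b)$ to be pairwise distinct (if $f_i(a,b)=f_j(a,b)$ the chain collapses to give $a=b$). Hence any nontrivial model has size at least $\kappa$, ruling out $\idclo{P_\lambda}$. The real work is then verifying that $\rel P_\kappa$ admits idempotent polymorphisms satisfying these identities: one takes $f_i$ to be the ``constant outside the diagonal'' map with value $(c_i^1,c_i^2)$ for pairwise distinct $c_i^1\in U_\kappa$, and defines $p_{i,j},q_{i,j},r_{i,j}$ componentwise so that their first components almost always land in $U_\kappa$; Lemma~\ref{lem:compatibility-with-blocker} then checks compatibility with $\gamma^{\rel P_\kappa}$. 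This concrete family of $\kappa$-many ``separated'' terms is exactly the missing ingredient in your outline.
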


\begin{proof}
Let $\idclo P_\kappa$ denote the idempotent reduct of $\clo P_\kappa$. We need
to show two facts: (1) there is a~clone homomorphism from $\idclo P_\lambda$ to
$\idclo P_\kappa$ for all $\lambda \leq \kappa$, and (2) there is no clone
homomorphism from $\idclo P_\kappa$ to $\idclo P_\lambda$ for any $\lambda <
\kappa$. (1) follows directly from Corollary \ref{cor:modularity-chain}, since any clone
homomorphism from $\clo P_\lambda$ to $\clo P_\kappa$ has to preserve idempotent
functions.

To prove (2) suppose that $\kappa > \lambda$. We will find identities that are
satisfied in $\idclo P_\kappa$ but are not satisfiable in $\idclo P_\lambda$. In
fact they are not satisfiable in any clone on a~set of cardinality strictly
smaller than $\kappa$ except the~one-element set.
The identities use binary symbols $f_i$ for $i\in \kappa$ and ternary symbols
$p_{i,j}$, $q_{i,j}$, $r_{i,j}$ for $i,j \in \kappa$, $i\neq j$:
\begin{align*}
  x &\equals p_{i,j}(x,f_j(x,y),y), \\
  p_{i,j}(x,f_i(x,y),y) &\equals q_{i,j}(x,f_j(x,y),y), \\
  q_{i,j}(x,f_i(x,y),y) &\equals r_{i,j}(x,f_j(x,y),y), \\
  r_{i,j}(x,f_i(x,y),y) &\equals y
\end{align*}
for all $i\neq j$, and $f_i(x,x) \equals x$ for all $i$.

We will define functions in $\idclo P_\kappa$ that satisfy these identities
coordinatewise (recall $P_\kappa = \kappa \times \kappa$); this will assure that
these functions are compatible with $\alpha^{\rel P_\kappa}$ and $\beta^{\rel
P_\kappa}$. We fix $c^1 \in U_\kappa$ and $c^2 \in \kappa$. The functions $f_i$
are defined, unless required by idempotence otherwise, as constants while
choosing different constants for different $i$'s. In detail, we pick $c^1_i \in
U_\kappa$ to be pairwise distinct, and $c^2_i \in \kappa$ as well. Then
\(
  f_i^u(x,y) = c^u_i
\) if $x\neq y$ and $f_i^u(x,x) = x$ for $u=1,2$.
The components of $p_{i,j}$, $q_{i,j}$, and $r_{i,j}$ are defined as
follows:
\[
\arraycolsep=0pt
\begin{array}{rlrl}
    p^1_{i,j}(x,y,z) &{}= \begin{cases}
      x & \text{if $y = f_j^1(x,z)$,}\\
      c_1 & \text{otherwise;}
    \end{cases}
  &\quad
    p^2_{i,j}(x,y,z) &{}= x;
  \\
    q^1_{i,j} (x,y,z) &{}= \begin{cases}
      x & \text{if $x = y = z$,} \\
      c_1 & \text{otherwise;}
    \end{cases}
  &\quad
    q^2_{i,j} (x,y,z) &{}= \begin{cases}
      x & \text{if $y = f_j^2 (x,z)$,} \\
      z & \text{if $y = f_i^2 (x,z)$,} \\
      c_2 & \text{otherwise;}
    \end{cases}
  \\
    r^1_{i,j}(x,y,z) &{}= \begin{cases}
      z & \text{if $y = f_i^1(x,z)$,}\\
      c_1 & \text{otherwise;}
    \end{cases}
  &\quad
    r^2_{i,j}(x,y,z) &{}= z;
\end{array}
\]
It is straightforward to check that these functions satisfy the identities. The
compatibility with $\alpha^{\rel P_\kappa}$ and $\beta^{\rel P_\kappa}$ is
immediate from defining them component-wise. The compatibility with $\gamma^{\rel
P_\kappa}$ follows from Lemma~\ref{lem:compatibility-with-blocker}: the only
case when $p^1_{i,j}(x,y,z) \notin U_\kappa$ is when $x \notin U_\kappa$ and $y
= f_j^1(x,z)$ but $p^2_{i,j}$ does not depend on the second and third variable,
therefore $p_{i,j}$ is compatible; the argument for $r_{i,j}$ is analogous;
$q_{i,j}^1(x,y,z)$ falls in $U_\kappa$ unless $x = y = z \notin U_\kappa$, so
$q_{i,j}$ is also compatible.

Next, we claim that the above identities are not satisfied in any non-trivial
algebra $\al A$ of size strictly less than $\kappa$.
Indeed, if $A$ contains two distinct elements $a$ and $b$, then $f_i(a,b)$,
$i\in \kappa$ are pairwise distinct, since if $f_i^{\al A}(a,b) = f_j^{\al
A}(a,b)$, then
\begin{multline*}
  a
  = p^{\al A}_{i,j}(a,f_j(a,b),b)
  = p^{\al A}_{i,j}(a,f_i(a,b),b)
  = q^{\al A}_{i,j}(a,f_j(a,b),b)
  = \\ q^{\al A}_{i,j}(a,f_i(a,b),b)
  = r^{\al A}_{i,j}(a,f_j(a,b),b)
  = r^{\al A}_{i,j}(a,f_i(a,b),b)
  = b.
\end{multline*}
This shows in particular that these identities are not satisfiable in $\idclo
P_\lambda$.
\end{proof}

\begin{corollary} \label{cor:no-strongest-non-modular-variety}
The class of all interpretability classes of idempotent varieties that are not
congruence modular does not have a~largest element.
\end{corollary}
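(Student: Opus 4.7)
The plan is a short argument by contradiction that leverages the full strength of Proposition~\ref{prop:modularity-large-algebras}, namely the specific system of identities constructed in its proof that is unsatisfiable in any non-trivial algebra of small cardinality. First, assume for contradiction that there is a~largest interpretability class $[\var W]$ among idempotent non-congruence-modular varieties. Since the trivial variety is congruence modular, $\var W$ must contain a~non-trivial algebra; fix one, call it $\al A$, and set $\mu = |A|$.

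For each infinite cardinal $\kappa$, the variety $\var V_\kappa$ of actions of $\idclo P_\kappa$ is itself idempotent and not congruence modular: the algebra $(P_\kappa, \idclo P_\kappa)$ lies in $\var V_\kappa$ and has $\alpha^{\rel P_\kappa}$, $\beta^{\rel P_\kappa}$, and $\gamma^{\rel P_\kappa}$ among its congruences, forming a~pentagon. By the assumed maximality of $[\var W]$, the variety $\var V_\kappa$ is interpretable in $\var W$, and Lemma~\ref{lem:2.1} supplies a~clone homomorphism $\xi_\kappa\colon \idclo P_\kappa \to \clo W$ into the clone of $\var W$.

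To conclude, pick any infinite $\kappa > \mu$. The identities exhibited in the proof of Proposition~\ref{prop:modularity-large-algebras} hold in $\idclo P_\kappa$, hence by $\xi_\kappa$ also in $\clo W$, so they hold in every algebra of $\var W$, in particular in $\al A$. But those identities force the $\kappa$-many values $f_i(a,b)$ (for any fixed pair $a\neq b$) to be pairwise distinct, contradicting $|A| = \mu < \kappa$. Essentially all of the work is already done in Proposition~\ref{prop:modularity-large-algebras}; the only new observation is syntactic, namely that the obstructing identities produced there are genuine identities of the clone $\idclo P_\kappa$ and hence are preserved under any clone homomorphism out of it, not merely those landing in some $\idclo P_\lambda$. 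Consequently, no real obstacle remains.
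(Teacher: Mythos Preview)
Your argument is correct. Both your proof and the paper's are short contradictions built on Proposition~\ref{prop:modularity-large-algebras}, but they pull in opposite directions. The paper first bounds the hypothetical maximal variety $\var V$ \emph{from above}: using Corollary~\ref{cor:idempotent-special-pentagon} and Proposition~\ref{prop:huge-pentagons} it interprets $\var V$ in the variety of actions of some $\clo P_\kappa$, then takes $\lambda>\kappa$, gets a clone homomorphism $\idclo P_\lambda \to \clo P_\kappa$ via maximality, and invokes Proposition~\ref{prop:modularity-large-algebras} as a black box (no such homomorphism exists). You instead bound $\var W$ \emph{from below} by fixing a single non-trivial algebra $\al A\in\var W$ of size $\mu$, and then apply the content of the proof of Proposition~\ref{prop:modularity-large-algebras}---the identities witnessed in $\idclo P_\kappa$ that fail in every non-trivial algebra of size below $\kappa$---directly to $\al A$. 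Your route is more economical in that it entirely sidesteps the pentagon structure theory (Corollary~\ref{cor:idempotent-special-pentagon} and Proposition~\ref{prop:huge-pentagons}); the price is that you must reach inside the proof of Proposition~\ref{prop:modularity-large-algebras} for the stronger statement that the identities are unsatisfiable in \emph{any} small non-trivial algebra, not just in $\idclo P_\lambda$. One minor remark: your citation of Lemma~\ref{lem:2.1} for the passage from interpretability to a clone homomorphism is slightly off, since that lemma is phrased for polymorphism clones of relational structures; what you actually use is the general correspondence between interpretability of varieties and clone homomorphisms described in Section~\ref{sec:2.1}, which is of course equally standard.
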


\begin{proof}
  For a~contradiction, suppose that there is a~largest interpretability class
  among those containing a~non-modular idempotent variety, and let $\var V$ be
  a~variety from this class. By Corollary~\ref{cor:idempotent-special-pentagon}
  and Proposition~\ref{prop:huge-pentagons}, we know that $\var V$ is
  interpretable in the variety of actions of $\clo P_\kappa$ for some $\kappa$.
  Fix any such $\kappa$, and let $\lambda > \kappa$ be a~cardinal. By the
  maximality of $\var V$, we have that the variety of actions of $\idclo
  P_\lambda$ (the idempotent reduct of $\clo P_\lambda$) is interpretable in
  $\var V$, therefore also in the variety of actions of $\clo P_\kappa$ which
  contradicts the previous proposition.
\end{proof}
 \section{Having \texorpdfstring{$n$}{n}-permutable congruences}
  \label{sec:n-permutability}

In this section, we will investigate both the strong Mal'cev conditions for
being congruence $n$-permutable (every two congruences $\alpha$, $\beta$ of
a~single algebra satisfy \( \alpha \circ_n \beta = \beta \circ_n \alpha \)) for
a~fixed $n$, and a~general condition of being congruence $n$-permutable for some
$n$. Therefore, we will speak about a~countable chain of filters in the
interpretability lattice and its limit. The primeness of the limit filter would
be implied by primeness of each of the filters from the chain.  Unfortunately,
we are not able to establish the version of Theorem \ref{thm:1.4}(ii) for the
filters from the chain.
We would also like to note that the condition of being $n$-permutable for some
$n$ can be also formulated as `having no nontrivial compatible partial order';
this have been attributed to Hagemann, for a~proof see \cite{freese13}.

\begin{theorem} \label{thm:5.1}
  A~variety $\var V$ is not congruence $n$-permutable for any $n$ if and only if
  it contains an algebra compatible with a~partial order that is not an
  antichain. \qed
\end{theorem}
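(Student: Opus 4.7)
My plan is to prove both directions using the Hagemann--Mitschke characterization: $\var V$ is congruence $n$-permutable if and only if it admits ternary terms $p_1,\ldots,p_{n-1}$ satisfying $p_1(x,y,y) \equals x$, $p_{n-1}(x,x,y) \equals y$, and $p_i(x,x,y) \equals p_{i+1}(x,y,y)$ for $1 \leq i \leq n-2$. Thus $\var V$ is $n$-permutable for no $n$ precisely when no finite sequence of ternary terms fits this pattern.

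For the easy direction ($\Leftarrow$), I take $\al A \in \var V$ compatible with a partial order $\leq$ satisfying $a < b$ for some $a,b \in A$, and suppose for contradiction that Hagemann--Mitschke terms $p_1,\ldots,p_{n-1}$ exist. Put $c_i := p_i^{\al A}(a,a,b)$ and $d_i := p_i^{\al A}(a,b,b)$. Since $(a,a),(a,b),(b,b) \in {\leq}$ and $\leq$ is compatible, each $c_i \leq d_i$. The HM identities give $d_1 = a$, $c_{n-1} = b$, and $c_i = d_{i+1}$, yielding the chain
\[
  b = c_{n-1} \leq d_{n-1} = c_{n-2} \leq d_{n-2} = c_{n-3} \leq \cdots \leq d_1 = a,
\]
so transitivity forces $b \leq a$ and antisymmetry then forces $a = b$, a contradiction.

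For the harder direction ($\Rightarrow$) I work in the free algebra $\al F := F_{\var V}(x,y)$ on two generators (the trivial variety is vacuously $n$-permutable, so I may assume $\var V$ is non-trivial, whence $x \neq y$ in $\al F$). Let $R$ be the subalgebra of $\al F^2$ generated by $\{(x,y),(x,x),(y,y)\}$; since $(x,x)$ and $(y,y)$ generate the diagonal, $R$ is the smallest compatible reflexive relation on $\al F$ containing $(x,y)$, and its elements are exactly the pairs $\bigl(t^{\al F}(x,x,y),\,t^{\al F}(x,y,y)\bigr)$ for ternary terms $t$. Let $\leq$ be the transitive closure of $R$; it is compatible (the transitive closure of a compatible relation is compatible) and is the smallest compatible quasi-order on $\al F$ containing $(x,y)$. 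I claim $\leq$ is antisymmetric: otherwise there are distinct elements comparable both ways, and pulling back along the pair $(x,y)$ it suffices to derive a contradiction from $(y,x) \in {\leq}$. Such a relation gives a chain $y = a_0, a_1,\ldots,a_n = x$ with $(a_i,a_{i+1}) = (t_i(x,x,y), t_i(x,y,y)) \in R$ for some ternary terms $t_0,\ldots,t_{n-1}$, so that $t_0(x,x,y) \equals y$, $t_{n-1}(x,y,y) \equals x$, and $t_i(x,y,y) \equals t_{i+1}(x,x,y)$ for $0 \leq i \leq n-2$. Reindexing $p_i := t_{n-i}$ gives a Hagemann--Mitschke system of length $n$, witnessing $(n+1)$-permutability and contradicting the hypothesis. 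Hence $\leq$ is a compatible partial order on $\al F$ containing $(x,y)$ with $x \neq y$, so it is not an antichain.

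The main obstacle is the hard direction, and specifically identifying the right compatible relation on the free algebra; once one takes $\leq$ as above, the translation of a would-be antisymmetry failure into an explicit Hagemann--Mitschke sequence is essentially forced by the description of $R$ via ternary terms, and the easy direction is the same computation read in the opposite direction inside a given compatible partial order.
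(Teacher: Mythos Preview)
The paper does not prove this theorem; it is stated with attribution to Hagemann and a reference to \cite{freese13}. Your argument is essentially the standard one and the easy direction as well as the main computation in the hard direction are correct. However, there is a genuine gap in the hard direction.

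Your claim that the quasi-order $\leq$ on $\al F = F_{\var V}(x,y)$ is antisymmetric is not justified by ``pulling back along the pair $(x,y)$'', and in fact it is false in general. Consider the variety with two ternary basic operations $p,q$ subject only to
\(
  p(x,x,y)\equals q(x,y,y)
\)
and
\(
  q(x,x,y)\equals p(x,y,y).
\)
The two-element algebra with $p=q=x_1\vee x_2\vee x_3$ satisfies these identities and carries the compatible order $0<1$, so the variety is not $n$-permutable for any $n$. In $\al F$, set $a=p(x,x,y)$ and $b=p(x,y,y)$; then $(a,b)\in R$ via the term $p$, and $(b,a)=(q(x,x,y),q(x,y,y))\in R$ via the term $q$, while a separate two-element model with $p(x,x,y)=x$, $p(x,y,y)=y$, $q(x,x,y)=y$, $q(x,y,y)=x$ shows $a\neq b$ in $\al F$. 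Thus $\leq$ fails antisymmetry even though $(y,x)\notin{\leq}$.

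The repair is straightforward and is what the standard proof does: you have correctly shown that $(y,x)\notin{\leq}$, since otherwise a chain in $R$ from $y$ to $x$ produces Hagemann--Mitschke terms. Now set $\sim:={\leq}\cap{\leq}^{-1}$; this is a congruence of $\al F$ (an intersection of compatible reflexive transitive relations, symmetric by construction). The induced relation $\leq/{\sim}$ on $\al F/{\sim}\in\var V$ is a compatible partial order, and $[x]\neq[y]$ because $(y,x)\notin{\leq}$, while $[x]\leq[y]$ because $(x,y)\in{\leq}$. Hence $\al F/{\sim}$ is the desired algebra with a compatible non-antichain partial order. Replace your antisymmetry claim by this quotient step and the proof is complete.
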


There are two well-known Mal'cev characterizations of the discussed conditions,
the older $(n+1)$-ary terms by Schmidt \cite{schmidt69} and refined ternary
terms by Hagemann and Mitschke \cite{hagemann.mitschke73} (items (2) and (3),
respectively):

\begin{theorem} The following are equivalent for any variety $\var V$ and every
positive integer $n$.
\begin{enumerate}
  \item $\var V$ is $n$-permutable;
  \item there are $(n+1)$-ary $\var V$-terms $s_0,\dots,s_n$ such that
  the identities
    \begin{align*}
      s_0(x_0,\dots,x_n) &\equals x_0,\ s_n(x_0,\dots,x_n) \equals x_n, \\
      s_i(x_0,x_0,x_2,x_2,\dots) &\equals s_{i+1}(x_0,x_0,x_2,x_2,\dots)
        \text{ for odd $i$, and} \\
      s_i(x_0,x_1,x_1,x_3,\dots) &\equals s_{i+1}(x_0,x_1,x_1,x_3,\dots)
        \text{ for even $i$}
    \end{align*}
  are satisfied in $\var V$;
  \item there are ternary $\var V$-terms $p_0,\dots,p_n$ such that
  the identities
    \begin{align*}
      p_0(x,y,z) &\equals x,\ p_n(x,y,z) \equals z,\text{ and} \\
      p_i(x,x,y) &\equals p_{i+1}(x,y,y) \text{ for every $i < n$ }
    \end{align*}
  are satisfied in $\var V$. \qed
\end{enumerate}
\end{theorem}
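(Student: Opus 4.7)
The plan is to prove $(1) \Leftrightarrow (2)$ and $(1) \Leftrightarrow (3)$ independently by the standard free-algebra Mal'cev-condition argument, from which the three-way equivalence is immediate.

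In both directions $(2) \Rightarrow (1)$ and $(3) \Rightarrow (1)$, given an arbitrary algebra $\al A \in \var V$ with congruences $\alpha, \beta$ and a pair $(a, c) \in \alpha \circ_n \beta$ witnessed by an alternating chain $a = a_0, a_1, \ldots, a_n = c$, I plug the chain into the terms: $b_i := s_i^{\al A}(a_0, \ldots, a_n)$ in case (2), or $d_i := p_i^{\al A}(a, b, c)$ for a single intermediate $b$ with $a \mathrel{\alpha} b \mathrel{\beta} c$ in case (3). The endpoint identities collapse $b_0, b_n$ (respectively $d_0, d_n$) to $a, c$, and each consecutive relation in the new chain is obtained by rewriting the inputs using the original chain's congruence pairs, invoking the appropriate term identity (Schmidt's of the matching parity, or Hagemann--Mitschke's), and undoing the rewriting; the resulting chain witnesses $(a, c) \in \beta \circ_n \alpha$.

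For the converses, I use the free $\var V$-algebra on $n + 1$ generators $x_0, \ldots, x_n$ (for $(1) \Rightarrow (2)$) or on three generators $x, y, z$ (for $(1) \Rightarrow (3)$), together with suitable congruences: $\alpha := \Cg\{(x_{2i}, x_{2i+1})\}$ and $\beta := \Cg\{(x_{2i+1}, x_{2i+2})\}$ in the first case, and $\alpha := \Cg\{(x, y)\}$, $\beta := \Cg\{(y, z)\}$ in the second. In both settings, the generators form a short alternating chain witnessing membership in $\alpha \circ_n \beta$, so by $n$-permutability there is a new chain realizing $\beta \circ_n \alpha$ whose elements I write as terms. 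The key observation is that two elements of the free algebra are $\alpha$-related (respectively $\beta$-related) precisely when the corresponding terms become equal after identifying the generators merged by $\alpha$ (resp.\ $\beta$); reading off the alternating relations of the new chain through this dictionary, together with the fixed-endpoint conditions, translates into exactly Schmidt's identities in the first case and Hagemann--Mitschke's identities in the second.

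The principal technical obstacle is parity bookkeeping: matching the alternation of the chain to the parity-indexed form of the identities, together with correctly handling the boundary indices $i = 0, n$ in each direction.
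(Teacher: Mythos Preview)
The paper does not prove this theorem; it is quoted as a known result (Schmidt for (2), Hagemann--Mitschke for (3)) and closed with a bare \qed, so there is no argument to compare against. Your outline for $(1)\Leftrightarrow(2)$ is the standard one and is fine. Both directions of $(1)\Leftrightarrow(3)$, however, have real gaps.

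For $(3)\Rightarrow(1)$: a witness to $(a,c)\in\alpha\circ_n\beta$ is a chain $a=a_0,\dots,a_n=c$ with $n-1$ intermediate points, not one, so ``a single intermediate $b$ with $a\mathrel{\alpha}b\mathrel{\beta}c$'' already mis-states what you are given when $n>2$. Even if you retreat to proving only $\alpha\circ\beta\subseteq\beta\circ_n\alpha$, the chain $d_i=p_i(a,b,c)$ does not alternate as you claim: one checks $d_0\mathrel{\beta}d_1$ and $d_{n-1}\mathrel{\alpha}d_n$, but for $1\le i\le n-2$ the only passage from $p_i(a,b,c)$ to $p_{i+1}(a,b,c)$ through the identity $p_i(x,x,y)\equals p_{i+1}(x,y,y)$ goes via $p_i(a,a,c)=p_{i+1}(a,c,c)$, and the two rewriting steps use $b\mathrel{\alpha}a$ and $c\mathrel{\beta}b$ respectively, giving a composite $(\alpha\circ\beta)$-step rather than a single congruence. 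The standard argument feeds the \emph{full} chain into the ternary terms, using elements of the form $p_i(a_{i-1},a_i,a_{i+1})$; then the two rewriting steps at stage $i$ use $a_{i-1}\,R\,a_i$ and $a_i\,R\,a_{i+1}$, which lie in the \emph{same} congruence and combine into one step of the correct parity.

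For $(1)\Rightarrow(3)$: with $\alpha=\Cg(x,y)$ and $\beta=\Cg(y,z)$ in the free algebra on $\{x,y,z\}$, a $\beta\circ_n\alpha$-chain from $x$ to $z$ yields terms $q_0,\dots,q_n$ whose alternating relations translate, by your own dictionary, into
\[
q_i(x,y,y)\equals q_{i+1}(x,y,y)\quad(i\text{ even}),\qquad
q_i(x,x,z)\equals q_{i+1}(x,x,z)\quad(i\text{ odd}).
\]
These are \emph{not} the Hagemann--Mitschke identities $p_i(x,x,y)\equals p_{i+1}(x,y,y)$, which compare consecutive terms under \emph{different} substitutions on the two sides. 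That asymmetry is exactly what a single congruence relation between $q_i$ and $q_{i+1}$ in the free algebra cannot encode; obtaining the identities in (3) requires a genuinely different construction.
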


\subsection{Colorings and linear varieties}

For the characterization of congruence $n$-permutable varieties in the means of
coloring, we will use the smallest structures with two equivalence relations
that do not $n$-permute (denoted $\rel W_n$) for the strong conditions, and the
two-element chain for the general condition.

\begin{definition}
For $n \geq 2$, we define the structure $\rel W_n$ as the relational structure
on $\{ 0,\dots, n \}$ with two binary equivalence relations $\alpha$ and $\beta$
defined by partitions $01|23|\dots$ and $0|12|34|\dots$, respectively.
\end{definition}

The the following is a~variation on a result of Sequeira \cite{sequeira01}, who
provided a~very similar characterization by the means of compatibility with
projections.

\begin{proposition} \label{prop:coloring-for-permutability}
  Let $n\geq 2$. A~variety is not congruence $n$-permutable if and only if has
  strongly $\rel W_n$-colorable terms.
\end{proposition}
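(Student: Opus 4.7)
The plan is to mirror the proof of Proposition~\ref{prop:coloring-for-modularity} using Schmidt's $(n+1)$-ary characterization of $n$-permutability. I write $x_0,\dots,x_n$ for the generators of the free structure $\rel F$ generated by $\rel W_n$, and use Lemmas~\ref{lem:coloring-by-quosets} and~\ref{lem:coloring-by-equivalences} to pass to the refined structure in which $\freeclo\alpha$ and $\freeclo\beta$ are replaced by their transitive closures (equivalently, the congruences of $\al F$ generated by $\alpha^{\rel W_n}$ and $\beta^{\rel W_n}$). To simplify notation I identify each $(n+1)$-ary term $s$ with the element $s(x_0,\dots,x_n)\in F$.

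For the forward direction, suppose $\var V$ has Schmidt terms $s_0,\dots,s_n$. Since the substitution $x_{2j+1}\mapsto x_{2j}$ coincides with the $\alpha^{\rel W_n}$-identification, Lemma~\ref{lem:coloring-by-equivalences}(3) gives $(s_i,s_{i+1})\in\freeclo\alpha$ for odd $i$; the analogous computation gives $(s_i,s_{i+1})\in\freeclo\beta$ for even $i$. Given a strong coloring $c$, one has $c(s_0)=0$ and $c(s_n)=n$. Induction on $k\geq 1$ yields $c(s_k)\in\{0,\dots,k-1\}$: the first ($\beta$-)step is trapped at $0$ since $\{0\}$ is the $\beta$-class of $0$ in $\rel W_n$, and each subsequent alternating step enlarges the reachable set in $\rel W_n$ by at most one element. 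This contradicts $c(s_n)=n$.

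For the backward direction, assume $\var V$ is not $n$-permutable. I construct the coloring by iterated closures: set $B_0=\freeclo\beta[x_0]$ and for $k\geq 0$ put $B_{k+1}=\freeclo\alpha[B_k]$ if $k$ is even, $B_{k+1}=\freeclo\beta[B_k]$ if $k$ is odd; finally define $c(f)=\min\{k\leq n:f\in B_k\}$, with $c(f)=n$ when $f\notin B_n$. Inclusion $x_k\in B_k$ follows from the generator chain $x_0,x_1,\dots,x_k$ (whose consecutive pairs alternate in $\freeclo\alpha$ and $\freeclo\beta$, starting with $\freeclo\alpha$), padded at the start with the reflexive pair $(x_0,x_0)\in\freeclo\beta$. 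The key point is that $x_k\notin B_{k-1}$: a chain witnessing the contrary would produce $(n+1)$-ary terms satisfying the Schmidt identities for $n'=k$ once the variables $x_{k+1},\dots,x_n$ are collapsed onto $x_k$, giving $k$-permutability and hence $n$-permutability of $\var V$, contradicting the assumption. That $c$ is a homomorphism is a case analysis based on parity: each $B_k$ is $\freeclo\alpha$-closed for odd $k$ and $\freeclo\beta$-closed for $k=0$ and even $k\geq 2$, so if $(f,g)\in\freeclo\alpha$ (respectively $\freeclo\beta$) then the parity of $c(f)$ forces $c(g)$ into the $\alpha^{\rel W_n}$- (respectively $\beta^{\rel W_n}$-)class of $c(f)$; the boundary case $f\notin B_n$ is handled by noting that $B_{n-1}$ is closed under the final ``non-escape'' relation.

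The main obstacle will be the collapsing step in the backward direction, where I must verify that the Schmidt identities for length $k$ genuinely survive the substitution $x_{k+1},\dots,x_n\mapsto x_k$; this requires careful bookkeeping of how the $\alpha$- and $\beta$-identifications interact with the substitution, and in particular that the intermediate equations $f_i(\text{$\alpha$-pattern})=f_{i+1}(\text{$\alpha$-pattern})$ collapse correctly to the shorter Schmidt chain without losing content.
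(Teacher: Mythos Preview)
Your proposal is correct and is essentially the paper's own proof: the paper defines the coloring by $c(f)=\min\{i:(x_0,f)\in\freeclo\beta\circ_{i+1}\freeclo\alpha\}$ (which is exactly your $\min\{k:f\in B_k\}$), verifies the homomorphism property by the same parity case-analysis on the $\freeclo\alpha$- and $\freeclo\beta$-closedness of the sets $B_k$, and proves strongness via precisely the collapsing substitution $x_{k+1},\dots,x_n\mapsto x_k$ that you anticipate. Your worry about that step is legitimate but routine to discharge: since $(t_j,t_{j+1})\in\freeclo\beta$ (resp.\ $\freeclo\alpha$) records an identity valid under the $\beta$- (resp.\ $\alpha$-) identification of variables, and the substitution $x_m\mapsto x_{\min(m,k)}$ carries any $(n+1)$-ary $\beta$- or $\alpha$-pattern to an instance of itself, the Schmidt identities for the shortened chain follow directly---the paper carries this out in one line.
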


\begin{proof}
  First we fix some notation. Let $\rel F$ denote the structure obtained from
  the free structure in $\var V$ generated by $\rel W_n$ by replacing each of
  its relations by its transitive closure. Again, as in the proof of
  Proposition~\ref{prop:coloring-for-modularity}, we obtain that all relations
  of $\rel F$ are equivalences, and $c\colon F \to W_n$ is a~coloring if and
  only if it is a~homomorphism from $\rel F$ to $\rel W_n$. Also, to simplify
  the notation, we identify an~$(n+1)$-ary term $s$ with $s(x_0,\dots,x_n)$.
  
  To prove the first implication, suppose that $\var V$ has Schmidt terms
  $s_0,\dots,s_n$. Our objective is to prove that terms of $\var V$ are not
  strongly $\rel W_n$-colorable. For the contrary, suppose that there is
  a~strong coloring $c\colon \rel F \to \rel W_n$.
  Any such coloring maps $x_0$, and therefore also $s_0$, to $0$. Further, from
  Schmidt identities we get that $(s_1,s_0) \in \beta^{\rel F}$, therefore also
  $c(s_1) = 0$. Following a~similar argument for pairs $(s_{i+1},s_i)$
  alternating between $\beta^{\rel F}$ and $\alpha^{\rel F}$, we obtain $c(s_i)
  < i$ for all $i>0$ which in particular implies that $c(x_n) = c(s_n) < n$,
  a~contradiction with $c(x_n) = n$.

  For the other implication, suppose that $\var V$ does not have Schmidt terms.
  Similarly as in the proof of Proposition~\ref{prop:coloring-for-modularity},
  we define a~coloring $c$ by setting $c(x_0) = 0$ and then, following
  Schmidt-like chains of terms, extend this definition as we are forced, and
  finally, setting $c(f) = n$ for all elements that have not been reached. This
  is summarized in the following definition:
  \[
    c(f) = \min
      \{ i : (x_0,f) \in \freeclo\beta\circ_{i+1} \freeclo\alpha \}
  \]
  if the set on the right hand side is non-empty, and $c(f) = n$, otherwise. By
  definition, we have that for each $i$, $c(x_i) \leq i$.  First, we need to
  show that $c$ is well-defined, in particular that $c(f) \leq n$ for all $f$.
  Suppose that $f$ satisfies $(x_0, f) \in \beta^{\rel F} \join \alpha^{\rel
  F}$. Immediately, we get that $f$ is idempotent, i.e., $f(x_0,\dots,x_0) =
  x_0$. Also, since
  \(
    (x_0,x_i) \in
      \alpha^{\rel F} \circ_{i} \beta^{\rel F} \subseteq
      \beta^{\rel F} \circ_{n+1} \alpha^{\rel F}
  \)
  for all $i$, and $\beta^{\rel F} \circ_{n+1} \alpha^{\rel F}$ is
  a~subuniverse,
  $(x_0,f) \in \beta^{\rel F} \circ_{n+1} \alpha^{\rel F}$.
  Therefore, the aforementioned set contains $n$, and we have $c(f) \leq n$.

  Next, we show that $c$ is compatible with $\alpha$ and $\beta$.  Suppose that
  $(f,g) \in \freeclo \alpha$ and both $f$ and $g$ are idempotent. Without loss
  of generality, suppose that $c(f) \leq c(g)$. If $c(f)$ is an odd number, then
  $c(g) \leq c(f)$; if $c(f)$ is even, then $c(g) \leq c(f) + 1$. Altogether, we
  have that $(c(g),c(f))$ differ by at most 1, and if they do, then $c(f)$ is
  odd and $c(g) = c(f) + 1$. This gives that $(c(f),c(g)) \in \alpha^{\rel
  W_n}$. Similarly, we get that $c$ is compatible with $\beta$, therefore $c$ is
  a~coloring.

  Lastly, we need to show that $c$ is strong. As mentioned before, $(x_0,x_i)
  \in \beta^{\rel F} \circ_{i+1} \alpha^{\rel F}$, therefore $c(x_i) \leq i$ for
  all $i$. For a~contradiction suppose that $c(x_i) < i$ for some $i$, i.e.,
  $(x_0,x_i) \in \beta^{\rel F} \circ_i \alpha^{\rel F}$.
  This implies that there are terms $x_0 = t_0$, $t_1$, \dots, $t_i = x_i$ with
  $(t_j,t_{j+1}) \in \beta^{\rel F}$ for even $j$ and $(t_j,t_{j+1}) \in
  \alpha^{\rel F}$ for odd $j$. Now, define $(i+1)$-ary terms $s_0,\dots,s_i$ by
  putting:
  \[
    s_j (x_0,\dots,x_i) = t_j (x_0,\dots,x_i,x_i,\dots,x_i).
  \]
  We claim that these terms satisfy Schmidt identities. Indeed, $s_0 = x_0$,
  $s_i = x_i$, $s_j(x_0,x_0,x_2,\dots) = s_{j+1}(x_0,x_0,x_2,\dots)$ for odd
  $j$, since $(t_j,t_{j+1}) \in \alpha^{\rel F}$ for such $j$, and
  $s_j(x_0,x_1,x_1,\dots) = s_{j+1}(x_0,x_1,x_1,\dots)$ for even $j$, since
  $(t_j,t_{j+1}) \in \beta^{\rel F}$ for such $j$.
  This gives is a~contradiction with the fact that $\var V$ is
  not even congruence $n$-permutable.
\end{proof}

The corresponding result for the general Mal'cev condition was proven
in~\cite{barto.oprsal.ea15}.

\begin{proposition} [{\cite[Proposition 7.2]{barto.oprsal.ea15}}]
  \label{prop:barto.oprsal.pinsker-n-perm}
  A~variety is not $n$-permutable for any $n$ if and only if it has strongly
  $(\{0,1\},\leq)$-colorable terms. \qed
\end{proposition}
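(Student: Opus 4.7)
The plan is to combine Lemma~\ref{lem:coloring-by-quosets} with the Hagemann-Mitschke characterization of $n$-permutability. Let $\rel F$ denote the free structure in $\var V$ generated by $(\{0,1\},\leq)$, and let $\rel F'$ be obtained from $\rel F$ by replacing $\leq^{\rel F}$ with its transitive closure. Since $\leq^{\rel F}$ already contains the diagonal, $\leq^{\rel F'}$ is a preorder. By Lemma~\ref{lem:coloring-by-quosets}, a $(\{0,1\},\leq)$-coloring is the same as a homomorphism $\rel F' \to (\{0,1\},\leq)$; being strong requires $x_0 \mapsto 0$ and $x_1 \mapsto 1$, so a strong coloring exists if and only if $(x_1, x_0) \notin \leq^{\rel F'}$.

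The key step is an explicit description of $\leq^{\rel F}$: a pair $(f, g)$ belongs to $\leq^{\rel F}$ if and only if there is a ternary $\var V$-term $s$ with $f = s^{\al F}(x_0, x_0, x_1)$ and $g = s^{\al F}(x_0, x_1, x_1)$. Indeed, any such pair arises by applying some $k$-ary term $t$ componentwise to tuples from $\{(0,0),(0,1),(1,1)\}$, and grouping the coordinates by which of these three pairs they use produces the ternary term $s$. Chaining this through the transitive closure, $(x_1, x_0) \in \leq^{\rel F'}$ if and only if there exist ternary $\var V$-terms $s_1, \dots, s_m$ satisfying $x_1 \equals s_1(x_0, x_0, x_1)$, $s_j(x_0, x_1, x_1) \equals s_{j+1}(x_0, x_0, x_1)$ for $1 \le j < m$, and $s_m(x_0, x_1, x_1) \equals x_0$ as identities in $\var V$. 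Padding with two projections and re-indexing in reverse, this is exactly the existence of Hagemann-Mitschke terms of length $m+1$.

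Both directions of the proposition now follow. If $\var V$ is $n$-permutable for some $n$, then $(x_1, x_0) \in \leq^{\rel F'}$, so $x_0$ and $x_1$ become equivalent in the preorder on $F$, and any homomorphism into the antisymmetric poset $(\{0,1\},\leq)$ identifies them, ruling out a strong coloring. Conversely, if $\var V$ is not $n$-permutable for any $n$, define $c\colon F \to \{0,1\}$ by $c(f) = 0$ if $(f, x_0) \in \leq^{\rel F'}$ and $c(f) = 1$ otherwise. Its zero-set is the principal downset of $x_0$ in the preorder, hence downward-closed, so $c$ preserves $\leq$; by hypothesis $c(x_1) = 1$, so $c$ is a strong coloring.

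The main technical obstacle is establishing the correspondence between elements of $\leq^{\rel F}$ and ternary $\var V$-terms, which hinges on the fact that $\{(0,0),(0,1),(1,1)\}$ has exactly three elements, matching the three argument positions of $s$. The rest is essentially bookkeeping: re-indexing a preorder chain into a Hagemann-Mitschke sequence, and verifying that a principal downset yields a valid order-preserving map.
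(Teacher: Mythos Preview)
Your proof is correct. The paper does not include its own proof of this proposition; it is stated with a \qed\ and cited from \cite{barto.oprsal.ea15}, so there is no in-paper argument to compare against directly. Your approach is exactly in the spirit of the paper's proof of the neighbouring Proposition~\ref{prop:coloring-for-permutability}: set up the free structure, pass to the transitive closure via Lemma~\ref{lem:coloring-by-quosets}, and identify the obstruction to a strong coloring with a Mal'cev chain. The one natural difference is that you use Hagemann--Mitschke terms rather than Schmidt terms; this is the right choice here, since the generating relation $\{(0,0),(0,1),(1,1)\}$ has three elements and hence pairs in $\leq^{\rel F}$ are parametrized by ternary terms, which lines up cleanly with the Hagemann--Mitschke format. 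The remaining bookkeeping (reversing the chain and padding with the two outer projections to get $p_0,\dots,p_{m+1}$) is routine and you have described it accurately.
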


\subsection{Idempotent varieties}

The fact, that the interpretability join of two idempotent varieties that are
not congruence $n$-permutable for any $n$ is not congruence $n$-permutable
either, follows from a~result of Valeriote and Willard
\cite{valeriote.willard14}. No similar result about the corresponding strong
Mal'cev conditions is known.

\begin{theorem}[\cite{valeriote.willard14}] \label{thm:valeriote.willard}
  An~idempotent variety is not $n$-permutable for any $n$ if and only if it is
  interpretable in the variety of distributive lattices. \qed
\end{theorem}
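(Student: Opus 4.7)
The plan is to handle the two directions with different tools.

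For the easy direction $(\Leftarrow)$: the two-element distributive lattice is compatible with the partial order $0 < 1$, so the variety of distributive lattices contains an algebra compatible with a non-antichain partial order; by Theorem~\ref{thm:5.1} this forces the variety of distributive lattices itself to not be $n$-permutable for any $n$. If $\var V$ were interpretable in the variety of distributive lattices and had Hagemann--Mitschke terms $p_0,\dots,p_n$, their images under the interpretation would be distributive-lattice terms satisfying the same identities, contradicting the previous sentence. Hence $\var V$ is not $n$-permutable for any $n$.

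For the hard direction $(\Rightarrow)$: assume $\var V$ is idempotent and not $n$-permutable for any $n$. By Theorem~\ref{thm:5.1}, $\var V$ contains an algebra $\al A$ compatible with a partial order $\leq$ on $A$ that is not an antichain, so we may pick comparable $a<b$ in $\al A$. Because $\var V$ is idempotent, Lemma~\ref{lem:2.1} reduces interpretability of $\var V$ in the variety of distributive lattices to producing an algebra in $\var V$ compatible with $\rel B = (\{0,1\},\leq)$: operations of such an algebra are automatically monotone and idempotent, hence distributive-lattice terms on $\{0,1\}$. The plan is therefore to distill a two-element monotone algebra in $\var V$ out of $(\al A,\leq)$.

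A naive attempt --- quotienting $\al A$ by collapsing a downward-closed set containing $a$ to $0$ and an upward-closed set containing $b$ to $1$ --- generally fails to yield a congruence. On the other hand, Proposition~\ref{prop:barto.oprsal.pinsker-n-perm} already gives that the terms of $\var V$ are strongly $(\{0,1\},\leq)$-colorable, which amounts to an h1-clone-homomorphism from $\clo F_{\var V}$ to $\clo B$. The remaining task is to upgrade this to a genuine clone homomorphism. For idempotent varieties, one invokes the absorption theorem for idempotent algebras (or, equivalently, tame-congruence-theoretic analysis of minimal sets): the absence of Hagemann--Mitschke chains of any length forces enough semilattice-like structure that a two-element absorbing subuniverse exists in a suitable finite power of $\al A$, on which the induced operations are monotone, providing the desired two-element algebra in $\var V$.

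The main obstacle is precisely this upgrade from coloring to a genuine clone homomorphism, which is known to fail in general without the idempotence assumption. Essential use of idempotence enters exactly here through absorption theory or tame congruence theory, and this is the deep content of the Valeriote--Willard proof.
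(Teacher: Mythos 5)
First, note that the paper does not prove this theorem at all: it is quoted verbatim from Valeriote and Willard \cite{valeriote.willard14} with a \qed, and the surrounding text only extracts Corollary~\ref{cor:valeriote.willard} from it. So there is no in-paper proof to match; your proposal has to be judged on its own.

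Your $(\Leftarrow)$ direction is correct and complete: the two-element lattice is compatible with $(\{0,1\},\leq)$, so by Theorem~\ref{thm:5.1} the variety of distributive lattices is not $n$-permutable for any $n$, and Mal'cev conditions pass up along interpretations. Your reduction in the $(\Rightarrow)$ direction is also the right one (by Lemma~\ref{lem:2.1} and the fact that the idempotent monotone clone on $\{0,1\}$ is the clone of the two-element lattice, it suffices to exhibit an algebra of $\var V$ compatible with $(\{0,1\},\leq)$). But at that point the proposal stops being a proof. The entire content of the theorem is precisely the upgrade you defer: passing from the strong $(\{0,1\},\leq)$-coloring of Proposition~\ref{prop:barto.oprsal.pinsker-n-perm} (an h1 clone homomorphism) to a genuine clone homomorphism, i.e.\ to a two-element member of $\var V$ whose operations are monotone. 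You "invoke the absorption theorem" to produce "a two-element absorbing subuniverse in a suitable finite power of $\al A$," but no such theorem exists in the form you state, and absorption/tame-congruence machinery is developed for finite (or locally finite) algebras, whereas the theorem concerns arbitrary idempotent varieties --- indeed the paper goes to some trouble elsewhere (Tarski's construction, the chains $\clo P_\kappa$) exactly because non-locally-finite idempotent varieties cannot be handled by such finiteness-based tools. The actual Valeriote--Willard argument constructs the two-element ordered algebra directly from a compatible quasi-order on the two-generated free algebra; nothing in your sketch substitutes for that construction. As written, the hard direction is an accurate description of what needs to be proved rather than a proof of it.
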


An important step to unify this result with the similar for linear varieties is
the observation that the variety of actions of idempotent polymorphisms of
$(\{0,1\},\leq)$ is interpretable in the variety of distributive lattices and
vice-versa. This is implicitly hidden in \cite{valeriote.willard14}: The key
reasons are that the clone of the two-element lattice on $\{0,1\}$ coincides with
the clone of idempotent polymorphisms of $\leq$, and that the two element
lattice generates the variety of ditributive lattices. As a~direct consequence
of this observation and the above theorem, we get:

\begin{corollary} \label{cor:valeriote.willard}
  An~idempotent variety is not $n$-permutable for any $n$ if and only if it
  contains an~algebra compatible with $(\{0,1\},\leq)$. \qed
\end{corollary}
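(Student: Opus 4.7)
The plan is to chain together Theorem \ref{thm:valeriote.willard} (Valeriote--Willard), Lemma \ref{lem:2.1}, and the observation immediately preceding the corollary: namely, that the clone of the two-element lattice coincides with the idempotent reduct of the polymorphism clone of $(\{0,1\}, \leq)$, and that the two-element lattice generates the variety of distributive lattices. So I want to argue
\[
  \text{not $n$-permutable for any $n$}
  \iff
  \text{interpretable in distributive lattices}
  \iff
  \text{compatible with } (\{0,1\}, \leq).
\]
The first equivalence is exactly Theorem \ref{thm:valeriote.willard}, and the second is what needs work.

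For the reverse direction of the second equivalence, suppose $\al A \in \var V$ is compatible with $(\{0,1\}, \leq)$. Since $\var V$ is idempotent, every basic operation of $\al A$ is both idempotent and monotone with respect to $\leq$, so each such operation lies in the clone of the two-element distributive lattice $\al L_2$. Hence $\al A$ is a reduct (via term operations) of $\al L_2$, and mapping each basic operation of $\var V$ to the corresponding lattice term of $\al L_2$ yields an interpretation of $\var V$ into $\HSP(\al L_2)$, the variety of distributive lattices. Equivalently, this direction follows directly from (3)$\to$(2) of Lemma \ref{lem:2.1} applied with $\rel B = (\{0,1\}, \leq)$, together with the fact that $\var V$ idempotent forces the interpretation to use only idempotent terms of $\clo B$, which are precisely the terms of $\al L_2$.

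For the forward direction, if $\var V$ is interpretable in the variety of distributive lattices, then the interpretation pushes through to $\HSP(\al L_2)$ since $\al L_2$ generates that variety. Taking the induced $\var V$-structure on $\{0,1\}$, we obtain an algebra $\al A \in \var V$ all of whose basic operations are idempotent lattice terms, hence monotone with respect to $\leq$, so $\al A$ is compatible with $(\{0,1\}, \leq)$. (Alternatively, this is (2)$\to$(3) of Lemma \ref{lem:2.1}, once one observes that interpretability in distributive lattices is the same as interpretability in the variety of actions of the polymorphism clone of $(\{0,1\}, \leq)$.)

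The only potentially tricky point is verifying that ``interpretable in the variety of distributive lattices'' coincides with ``interpretable in the variety of actions of $\clo B$'' where $\clo B = \operatorname{Pol}(\{0,1\}, \leq)$. The subtlety is that $\clo B$ contains the two constant unary operations, which are not idempotent and do not live in the lattice clone; but since $\var V$ is idempotent, any interpretation must send basic symbols to idempotent terms, and the idempotent reduct of $\clo B$ is exactly the clone of $\al L_2$. This reduction to the idempotent reduct is the only genuine content beyond quoting Theorem \ref{thm:valeriote.willard} and Lemma \ref{lem:2.1}.
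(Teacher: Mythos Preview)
Your proposal is correct and follows essentially the same approach as the paper: the paper itself marks the corollary with \qed and states only that it is a direct consequence of Theorem~\ref{thm:valeriote.willard} together with the preceding observation that the idempotent reduct of $\operatorname{Pol}(\{0,1\},\leq)$ coincides with the clone of the two-element lattice, which generates the variety of distributive lattices. You have simply spelled out these details more fully, and your treatment of the subtlety about the non-idempotent constants in $\clo B$ is the right way to bridge Lemma~\ref{lem:2.1} (which concerns the full clone $\clo B$) with interpretability in distributive lattices.
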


\subsection{Proofs of Theorems~\ref{thm:overview-introduction}(ii) and
\ref{thm:permutability-introduction}}

First, we address Theorem~\ref{thm:overview-introduction}(ii):
We have showed that if a~variety is not $n$-permutable for any $n$ and it is
either linear, or idempotent then it contains an algebra compatible with
$(\{0,1\},\leq)$ (the linear case follows from
Proposition~\ref{prop:barto.oprsal.pinsker-n-perm} and
Lemma~\ref{lem:linear-varieties}, the idempotent case from Corollary
\ref{cor:valeriote.willard}).  Therefore, given $\var V$ and $\var W$ that are
not $n$-permutable for any $n$ and are both linear, or idempotent, we know that
both contain an algebra that is compatible with $(\{0,1\},\leq)$.
Lemma~\ref{lem:joins} then gives that there is an algebra in $\var V\join \var
W$ that is compatible with this partial order, and therefore witnesses that
$\var V \join \var W$ is not $n$-permutable for any $n$ (see
Theorem~\ref{thm:5.1}).

Theorem~\ref{thm:permutability-introduction} is obtained in a~similar way:
Using Proposition~\ref{prop:coloring-for-permutability} and
Lem\-ma~\ref{lem:linear-varieties}, we get that any linear variety, that is not
$n$-permutable for a~fixed $n$, contains an algebra that is compatible with $\rel
W_n$, therefore also the join of two such varieties contains an algebra
compatible with $\rel W_n$. Having such an~algebra contradicts being
congruence $n$-permutable. \qed
 \section{Satisfying a~non-trivial congruence identity}

There are several Mal'cev characterizations of varieties that satisfy some
non-trivial congruence identity. An older one, called a~Hobby-McKenzie term (for
the definition we refer to \cite{hobby.mckenzie88} or \cite{kearnes.kiss13}),
and more recent by Kearnes and Kiss \cite{kearnes.kiss13}:

\begin{theorem}[{\cite[Theorems 5.28 and 7.15]{kearnes.kiss13}}]
  \label{thm:kearnes.kiss}
The following is equivalent for a~variety $\var V$.
\begin{enumerate}
  \item $\var V$ satisfies a~non-trivial congruence identity,
  \item $\var V$ satisfies an~idempotent Mal'cev condition that fails in the
  variety of semilattices,
  \item there exists $4$-ary terms $t_0,\dots,t_n$ such that the identities
    \begin{align*}
      t_0(x,y,z,w) &\equals x \text{ and } t_n(x,y,z,w) \equals w, \\
      t_i(x,y,y,y) &\equals t_{i+1}(x,y,y,y) \text{ for even $i$,} \\
      t_i(x,x,y,y) &\equals t_{i+1}(x,x,y,y) \text{ and }
      t_i(x,y,y,x) \equals t_{i+1}(x,y,y,x) \text{ for odd $i$}
    \end{align*}
  are satisfied in $\var V$. \qed
\end{enumerate}
\end{theorem}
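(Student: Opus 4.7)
The plan is to prove the cycle $(3) \to (2) \to (1) \to (3)$, with the last implication being where essentially all the difficulty resides.

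For $(3) \to (2)$, I would first note that the scheme in (3) is an idempotent Mal'cev condition: each $t_i$ is idempotent because setting $x \equals y \equals z \equals w$ and following the chain yields $t_i(x,x,x,x) \equals t_0(x,x,x,x) \equals x$, and the whole scheme consists of finitely many identities in $t_0,\dots,t_n$. To see that (3) fails in the variety of semilattices, I would use the fact that in a semilattice $(S;\join)$ a term is determined by its set of essential variables. Tracking how this set can change between $t_i$ and $t_{i+1}$ under the even- and odd-$i$ identities shows that it can never migrate from $\{1\}$ to $\{4\}$: the identity $t_0(x,y,y,y)\equals t_1(x,y,y,y)$ already forces the essential variables of $t_1$ to be $\{1\}$, and a short induction then forces every $t_i$ to have essential variable set $\{1\}$, contradicting $t_n \equals w$.

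For $(2) \to (1)$, I would invoke the commutator-theoretic machinery of Hobby and McKenzie \cite{hobby.mckenzie88}. An idempotent Mal'cev condition holding in $\var V$ and failing in semilattices excludes the semilattice (and often unary) local behaviour from finite algebras in $\var V$; by standard tame-congruence-theoretic results this is equivalent to a uniform commutator equation across~$\var V$, which translates directly into a non-trivial congruence identity.

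The hard implication is $(1) \to (3)$, and I would prove its contrapositive using a coloring construction in the spirit of Proposition~\ref{prop:coloring-for-modularity}. Supposing no sequence of terms satisfying (3) exists in $\var V$, the machinery of Section~\ref{sec:3.2} together with Lemma~\ref{lem:coloring-by-equivalences} should allow one to build a strong coloring of the terms of $\var V$ into a fixed relational structure whose polymorphism clone admits a clone homomorphism onto the clone of a two-element semilattice, thereby interpreting $\var V$ into the variety of semilattices. Since the variety of semilattices satisfies no non-trivial congruence identity, $\var V$ cannot satisfy one either. The main obstacle I anticipate here is isolating the correct small relational structure and checking that the specific failure pattern of~(3) is what guarantees existence of the coloring; this is precisely the step where the argument genuinely needs the commutator- and tame-congruence-theoretic input from~\cite{kearnes.kiss13}, and not just the combinatorial coloring framework developed earlier in this paper.
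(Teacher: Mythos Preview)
The paper does not prove this theorem at all: it is quoted from \cite{kearnes.kiss13} and closed with a~\qed, so there is no ``paper's own proof'' to compare against. Your proposal is therefore an attempt at an independent proof of a~deep result that the paper merely imports.

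On the merits of your sketch: the implication $(3)\to(2)$ is fine, and your essential-variable argument in the two-element semilattice is correct. The implication $(2)\to(1)$ is where the real work in \cite{kearnes.kiss13} lies, and your appeal to Hobby--McKenzie tame congruence theory is misplaced: that theory applies to locally finite varieties, whereas the theorem is stated for arbitrary $\var V$. What is needed here is the Kearnes--Kiss commutator theory developed specifically to handle the general case, and your paragraph does not engage with it.

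Your plan for $(1)\to(3)$ has a~genuine gap. A~strong $\rel S$-coloring of the terms of $\var V$ (which is what Proposition~\ref{prop:coloring-congruence-identity} produces when (3) fails) is only an~h1 clone homomorphism from the clone of $\var V$ to $\clo S$; by Lemma~\ref{lem:linear-varieties} this upgrades to an interpretation of $\var V$ in the variety of semilattices \emph{only when $\var V$ is linear}. For a~general $\var V$ you cannot conclude that $\var V$ is interpretable in semilattices, and hence you cannot deduce that $\var V$ satisfies no non-trivial congruence identity. You seem to recognise this in your last sentence, but note that invoking ``the input from \cite{kearnes.kiss13}'' at exactly this point is circular: that input \emph{is} the theorem you are trying to prove.
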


We will refer to the terms in item (3) as to Kearnes-Kiss terms. Also note that
as a~byproduct of this characterization, Kearnes and Kiss proved that
an~idempotent variety satisfies a~non-trivial congruence identity if and only if
it is not interpretable in the variety of semilattices (this follows from the
equivalence of (1) and (2) in the above theorem).

\subsection{Coloring and linear varieties}

Kearnes and Kiss also proved that item (1) of Theorem \ref{thm:kearnes.kiss}
implies that the variety contains an algebra $\al A$ with a~compatible
(sometimes called commuting) semilattice operation, i.e., there is a~semilattice
operation $\join$ on $A$ such that its graph---the relation $\{ (a,b,a\join b) :
a,b\in A \}$ is a~compatible relation.  This suggests a~relational structure to
be used in the coloring description of this Mal'cev condition: the structure
$\rel S$ on $\{0,1\}$ with one ternary relation $J^{\rel S} = \{ (x,y,x \join y)
: x,y \in \{0,1\} \}$.  Note that the idempotent reduct of $\clo S$ is the same
as the clone of term operations of the semilattice $(\{0,1\}, \join)$
(see also \cite[Lemma~5.25]{kearnes.kiss13}).

\begin{proposition} \label{prop:coloring-congruence-identity}
  A~variety $\var V$ does not have Kearnes-Kiss terms if and only if it has
  strongly $\rel S$-colorable terms.
\end{proposition}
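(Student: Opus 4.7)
The plan is to mirror the proof of Proposition~\ref{prop:coloring-for-modularity}. Let $\rel F$ denote the free structure in $\var V$ generated by $\rel S$.

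For the forward direction, suppose $\var V$ has Kearnes-Kiss terms $t_0,\dots,t_n$ and that $c\colon \rel F \to \rel S$ is a strong coloring. For every $k$-ary term $t$ of $\var V$, define $\phi_t\colon \{0,1\}^k \to \{0,1\}$ by $\phi_t(a_1,\dots,a_k) = c(t(x_{a_1},\dots,x_{a_k}))$. Since for any $a,b \in \{0,1\}$ the triple $(x_a, x_b, x_{a\vee b})$ lies in $J^{\rel S}$, applying $t$ componentwise to such triples in each coordinate produces a triple in $J^{\rel F}$, and because $c$ preserves $J$ each $\phi_t$ is a semilattice homomorphism with respect to coordinatewise join. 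Substituting $x=y=z=w$ in any Kearnes-Kiss identity and chaining with $t_0(x,y,z,w)\equals x$ gives $t_i(x,x,x,x)\equals x$ in $\var V$ for every $i$; hence $\phi_{t_i}(0,0,0,0)=0$ and $\phi_{t_i}(1,1,1,1)=1$, so each $\phi_{t_i}$ is determined by a nonempty $S_i \subseteq \{1,2,3,4\}$ via $\phi_{t_i}(a_1,a_2,a_3,a_4) = \bigvee_{j\in S_i} a_j$. The identities $t_0\equals x$ and $t_n\equals w$ force $S_0=\{1\}$ and $S_n=\{4\}$, while the remaining Kearnes-Kiss identities translate, for each $(a,b)\in\{0,1\}^2$, into conditions of the form ``$S_i$ meets $T$ iff $S_{i+1}$ meets $T$'' for various $T \subseteq \{1,2,3,4\}$ (for instance, the even identity forces $1 \in S_i \Leftrightarrow 1 \in S_{i+1}$ and $S_i \cap \{2,3,4\} \neq \emptyset \Leftrightarrow S_{i+1} \cap \{2,3,4\} \neq \emptyset$). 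A straightforward induction then forces $S_i=\{1\}$ throughout, contradicting $S_n=\{4\}$.

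For the reverse direction, assume $\var V$ has no Kearnes-Kiss terms. I would define $c\colon F \to \{0,1\}$ by a case analysis in the spirit of the construction in Proposition~\ref{prop:coloring-for-modularity}: set $c(f)=0$ precisely when $f$ is connected to $x_0$ via a Kearnes-Kiss-style chain built from $J^{\rel F}$, and $c(f)=1$ otherwise. That such a rule preserves $J^{\rel F}$ would follow from a short case analysis on each $J$-triple. The essential claim is $c(x_1)=1$: any ``$J$-chain'' forcing $x_1$ to be colored $0$ should be reassembled into a sequence of $4$-ary terms satisfying the Kearnes-Kiss identities, using the correspondence established in the forward direction between strong colorings and semilattice homomorphisms on $\{0,1\}^4$.

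The hard step is this reverse direction. Unlike the three equivalences $\alpha^{\rel F}, \beta^{\rel F}, \gamma^{\rel F}$ used in Proposition~\ref{prop:coloring-for-modularity}, which admit transitive closures and can be combined via joins and meets of congruences, the single ternary relation $J^{\rel F}$ has no natural one-step closure operation. Consequently, defining the Kearnes-Kiss-style chain precisely and extracting a bona fide sequence of Kearnes-Kiss terms from such a chain will require careful combinatorial bookkeeping: one must read off the ``semilattice-homomorphism shape'' of each $4$-ary term appearing in the chain---essentially inverting the forward computation---to reconstruct the sets $S_i$ of a would-be Kearnes-Kiss chain and thereby obtain the contradiction.
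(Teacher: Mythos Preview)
Your forward direction is correct and is essentially a more structured repackaging of the paper's argument. The paper observes directly that any triple in $J^{\rel F}$ has the property that its third entry is colored $0$ if and only if both of the first two are, and then tracks the three values $c(t_i(x_0,x_0,x_1,x_1))$, $c(t_i(x_0,x_1,x_0,x_1))$, $c(t_i(x_0,x_1,x_1,x_1))$ through the Kearnes--Kiss chain. Your version, computing the full join-homomorphism $\phi_{t_i}$ and its representing set $S_i$, carries a little more information than necessary but arrives at the same contradiction.

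The reverse direction, however, is where you have a genuine gap, and the difficulty you anticipate is largely illusory. You are missing the one observation that makes the argument short: since $|J^{\rel S}| = 4$, the relation $J^{\rel F}$ consists \emph{precisely} of the triples
\[
  \bigl(f^{\al F}(x_0,x_0,x_1,x_1),\; f^{\al F}(x_0,x_1,x_0,x_1),\; f^{\al F}(x_0,x_1,x_1,x_1)\bigr)
\]
as $f$ ranges over $4$-ary terms of $\var V$. Thus every triple in $J^{\rel F}$ \emph{already is} a $4$-ary term in disguise; there is no need to ``read off a semilattice-homomorphism shape'' or to invert anything from the forward direction (indeed, that inversion makes no sense here, since the coloring is what you are trying to build). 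With this description in hand, the paper defines $c(t)=0$ exactly when $t$ is reachable from $x_0$ by an alternating chain of $J^{\rel F}$-triples whose linking conditions are literally the Kearnes--Kiss identities: share the third coordinate at one parity, share the first two coordinates at the other. Verifying that $c$ preserves $J$ is then a two-line case split (if $c(t)=0$ extend the chain by one step to get $c(r)=c(s)=0$; if $c(r)=c(s)=0$ extend to get $c(t)=0$), and $c(x_1)=1$ holds because a chain reaching $x_1$ would hand you Kearnes--Kiss terms outright. No combinatorial bookkeeping beyond this is required.
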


\begin{proof}
  As usual, let $\rel F$ denote the free structure in $\var V$ generated by
  $\rel S$. We will also identify a~binary term $b$ with an element
  $b(x_0,x_1)\in F$.
  Observe that, the relation $J^{\rel F}$ consists of triples $(r,s,t)$ such
  that there exists a~4-ary term $f$ satisfying $f(x,x,y,y) \equals r(x,y)$,
  $f(x,y,x,y) \equals s(x,y)$, and $f(x,y,y,y) \equals t(x,y)$.

  First, suppose that $\var V$ has Kearnes-Kiss terms. We want to proof that
  there is no strong coloring $c\colon \rel F \to \rel S$.
  By the definition of a~coloring and the relation $J$, we can deduce that for
  every triple $(r,s,t) \in J^{\rel F}$ if $c(r) = 0$ and $c(s) = 0$ then also
  $c(t) = 0$, and similarly if $c(t) = 0$ then both $c(r)$ and $c(s)$ are also
  $0$.
  By combining these two observations, one can prove by induction on $i$ that
  for any coloring $c$ with $c(x_0) = 0$ and any an~term $t_i$ from
  Kearnes-Kiss chain we have that $c(t_i(x_0,x_0,x_1,x_1)) = 0$,
  $c(t_i(x_0,x_1,x_0,x_1)) = 0$, and $c(t_i(x_0,x_1,x_1,x_1)) = 0$.
  This shows that any such coloring $c$ has to satisfy $c(x_1) = 0$ which
  contradicts that $c$ is strong.

  For the other implication, suppose that the variety $\var V$ does not have
  Kearnes-Kiss terms. We define a~mapping $c\colon F \to J$ in such a~way that
  $c(t) = 0$ if and only if this fact is forced by the argument in the previous
  paragraph, that is, there exists tuples $(s_i,t_i,r_i)\in J^{\var V}$, $i =
  1,\dots,n$ such that $s_0 = t_0 = x_0$, $r_i = r_{i+1}$ for even $i$, $s_i =
  s_{i+1}$ and $t_i = t_{i+1}$ for odd $i$, and $t = r_n$ for $n$ odd, or $t \in
  \{s_n,t_n\}$ for $n$ even.
  We claim that $c$ is a~coloring. Indeed, if $c(t) = 0$ and $(r,s,t) \in
  J^{\var V}$ for some $r,s\in F$, then also $c(r) = c(s) = 0$ by the
  definition, and if $c(t) = 1$ and $(r,s,t) \in J^{\var V}$ then either $c(r) =
  1$, or $c(s) = 1$, otherwise we would have defined $c(t) = 0$. In either case,
  we have $c(s) \join c(r) = c(t)$ which is what we wanted to prove.

  The coloring $c$ is strong: it maps $x_0$ to $0$ by definition, and also
  $c(x_1) = 1$, since the variety does not have Kearnes-Kiss terms.
\end{proof}

The fact that Kearnes-Kiss terms are not colorable by $\rel S$ can be also
argued using the arguments of \cite{kearnes.kiss13}: having Kearnes-Kiss terms
is an~idempotent linear Mal'cev condition that is not satisfiable in the variety
of semilattices, and existence of a~strong coloring would imply that it is
satisfied in $\clo S$, therefore also in the semilattice $(\{0,1\},\join)$.

\subsection{Idempotent varieties}

There is not much left to prove for idempotent varieties in this case. Let us
just once more reformulate results of Kearnes and Kiss in the following way:

\begin{corollary} \label{cor:kearnes.kiss}
An~idempotent variety does not satisfy a~non-trivial congruence identity if and
only if it contains an~algebra compatible with $\rel S$.
\end{corollary}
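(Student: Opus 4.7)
The plan is to read this corollary off of Kearnes--Kiss through the clone-theoretic dictionary of Lemma~\ref{lem:2.1}. The ingredients I will combine are: (a) the equivalence of (1) and (2) in Theorem~\ref{thm:kearnes.kiss}, together with the subsequent observation, which says that an idempotent variety satisfies a non-trivial congruence identity if and only if it is \emph{not} interpretable in the variety of semilattices; (b) the fact recalled just before Proposition~\ref{prop:coloring-congruence-identity} that the idempotent reduct of $\clo S$ coincides with the clone of the two-element semilattice $(\{0,1\},\join)$, which in turn generates the variety of semilattices; and (c) Lemma~\ref{lem:2.1}, which equates the existence of a clone homomorphism from the clone $\clo F$ of $\var V$ into $\clo S$ with the existence of an algebra in $\var V$ compatible with $\rel S$.

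For the forward direction, I would assume that $\var V$ is idempotent and satisfies no non-trivial congruence identity. By (a), $\var V$ is interpretable in the variety of semilattices, so there is a clone homomorphism from $\clo F$ into the clone of $(\{0,1\},\join)$. By (b), this target clone is a subclone of $\clo S$, and composing with the inclusion yields a clone homomorphism $\clo F \to \clo S$. Invoking (c) then produces an algebra in $\var V$ compatible with $\rel S$.

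Conversely, starting from an algebra in $\var V$ compatible with $\rel S$, Lemma~\ref{lem:2.1} supplies a clone homomorphism $\xi\colon \clo F \to \clo S$. Since $\var V$ is idempotent, every element of $\clo F$ is an idempotent operation, and its image under $\xi$ is idempotent as well. Therefore $\xi$ factors through the idempotent reduct of $\clo S$, which by (b) is exactly the clone of $(\{0,1\},\join)$. This gives an interpretation of $\var V$ in the variety of semilattices, and a second appeal to (a) concludes that $\var V$ does not satisfy any non-trivial congruence identity.

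No substantial obstacle is expected: the corollary is essentially a clean repackaging of the Kearnes--Kiss characterization via Lemma~\ref{lem:2.1}. The only subtle point is that the idempotence of $\var V$ is what forces a clone homomorphism into $\clo S$ to land inside the semilattice-generating idempotent reduct; without idempotence, compatibility with $\rel S$ would be a strictly weaker condition than interpretability in the variety of semilattices, and the argument would not close.
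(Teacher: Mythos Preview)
Your proposal is correct. The forward direction is essentially the same as the paper's: both go through the Kearnes--Kiss observation that an idempotent variety without a non-trivial congruence identity is interpretable in the variety of semilattices, and then pass to an algebra compatible with $\rel S$ (you via Lemma~\ref{lem:2.1}, the paper by directly noting that semilattice term operations on $\{0,1\}$ preserve $J^{\rel S}$).

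The backward direction, however, is handled differently. The paper does not reuse Kearnes--Kiss here; instead, from an algebra $\al S$ compatible with $\rel S$ it observes that the natural homomorphism $\al F_S \to \al S$ is a strong $\rel S$-coloring, and then invokes Proposition~\ref{prop:coloring-congruence-identity} to conclude that $\var V$ lacks Kearnes--Kiss terms. This route never uses the idempotence hypothesis, so it actually shows that the implication ``contains an algebra compatible with $\rel S$ $\Rightarrow$ no non-trivial congruence identity'' holds for \emph{all} varieties. Your argument instead exploits idempotence to force the clone homomorphism $\clo F \to \clo S$ to land in the idempotent reduct (the semilattice clone) and closes the loop via Kearnes--Kiss again. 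Your approach is pleasantly symmetric and makes the dependence on idempotence explicit; the paper's approach is slightly more economical and reveals that idempotence is only needed for the forward direction.
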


\begin{proof}
  Suppose that an~idempotent variety $\var V$ does not satisfy a~non-trivial
  congruence identity. Then by Theorem~\ref{thm:kearnes.kiss}, it is
  interpretable in the variety of semilattices. Therefore, it contains
  an~algebra $\al S$ with $S = \{0,1\}$ whose basic operations are term
  operations of the two-element semilattice $(S; \join)$. But all such term
  operations are compatible with the relation $J^{\rel S}$ which means that $\al
  S$ is compatible with $\rel S$.

  For the other implication, suppose that $\var V$ contains an~algebra $\al S$
  compatible with $\rel S$. If this is the case, one can directly see that the
  terms of $\var V$ are $\rel S$-colorable (define the coloring by taking the
  natural homomorphism from the free algebra generated by $S$ to $\al S$), and
  therefore it does not satisfy a~non-trivial congruence identity by
  Proposition~\ref{prop:coloring-congruence-identity}.
\end{proof}

\subsection{Proof of Theorem \ref{thm:overview-introduction}(iii)}
  The argumentation here follows a~familiar pattern. First, we argue that any
  variety $\var V$ that is either linear, or idempotent has an~algebra
  compatible with $\rel S$: the linear case follows from
  Proposition~\ref{prop:coloring-congruence-identity} and
  Lemma~\ref{lem:linear-varieties}, the idempotent case from
  Corollary~\ref{cor:kearnes.kiss}.

  Next, given that both $\var V$ and $\var W$ do not satisfy a~congruence
  identity and are either linear, or idempotent, we know that both varieties
  contain an algebra compatible with $\rel S$, therefore also $\var V \join \var
  W$ does.  This shows in particular that $\var V \join \var W$ has $\rel
  S$-colorable terms, therefore it does not satisfy a~non-trivial congruence
  identity by Proposition~\ref{prop:coloring-congruence-identity}. \qed
 \section{Having a~cube term}
  \label{sec:cube-terms}

Cube terms describe finite algebras having few subpowers (i.e., with
a~polynomial bound in $n$ on the number of generators of subalgebras of $n$-th
power). This result and many more interesting properties of algebras with cube
terms can be found in \cite{berman.idziak.ea10},
\cite{aichinger.mayr.mckenzie14}, and \cite{kearnes.szendrei12}. There are
several Mal'cev conditions equivalent to having a~cube term, e.g.\ having
an~edge term, or having a~parallelogram term.  For our purpose, the most useful
of these equivalent conditions is the cube term itself.

Fix a~variety, and let $\al F$ be an~algebra that is freely generated by the set
$\{x,y\}$. An~$n$-cube term is a~$(2^n-1)$-ary term $c$ such that
\[
  c^{\al F^n}(\tup x_1,\dots,\tup x_{2^n-1}) = (x,\dots,x)
\]
where $\tup x_1,\dots,\tup x_{2^n-1}$ are all the $n$-tuples of $x$'s and $y$'s
containing at least one $y$. For example, a~Mal'cev term $q$ is
a~$2$-cube term since it satisfies \( q^{\al F^2}((x,y),(y,y),(y,x)) = (x,x) \).
The order of variables in cube terms will not play any role for us.

Although for the main result for cube terms, we only need to study the
corresponding strong Mal'cev conditions, we will continue to formulate all
results also for the general Mal'cev condition of having a~cube term of some
arity.  The reason for that is that the construction we use for the general
Mal'cev condition is a~simpler variant of the constructions we use for the
strong Mal'cev conditions.

Throughout the rest of the section, fix $n\geq 2$.

\subsection{Cube term blockers and crosses}

The notion of cube term blocker was introduced by by Markovi\'c, Mar\'oti, and
McKenzie \cite{markovic.maroti.mckenzie12} to describe finite idempotent
algebras that do not have a~cube term.
We define a~\emph{cube term blocker} to be a~proper subset $U$ of $A$ such that
for every $k\in \mathbb N$, $A^k \setminus (A\setminus U)^k$ is a~subuniverse of
$\al A^k$.

\begin{theorem}[{\cite[Theorem 2.1]{markovic.maroti.mckenzie12}}]
  A~finite idempotent algebra $\al A$ has a~cube term if and only if none of its
  subalgebras has a~cube term blocker. \qed
\end{theorem}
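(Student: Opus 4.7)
The plan is to prove both implications separately. The forward direction, showing that a cube term precludes any subalgebra blocker, proceeds by a direct substitution in the cube-term identity. The backward direction, extracting a blocker from the absence of a cube term, is the harder half and requires a structural analysis of the free algebra.

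For the forward direction, suppose that $\al B \leq \al A$ has a cube term blocker $U$, and assume for contradiction that $c$ is an $n$-cube term of $\al A$ for some $n \geq 2$. Since $c$ restricts to an $n$-cube term of $\al B$, the identity $c^{\al B^n}(\tup x_1,\dots,\tup x_{2^n-1}) = (x,\dots,x)$ holds in $\al B^n$. Fix $a \in U$ and $b \in B \setminus U$, and substitute $x \mapsto b$, $y \mapsto a$ throughout this identity. Writing $\tup x_j'$ for the image of $\tup x_j$ under this substitution, we obtain
\[
  c^{\al B^n}(\tup x_1',\dots,\tup x_{2^n-1}') = (b,\dots,b).
\]
Since each $\tup x_j$ had at least one $y$, each $\tup x_j'$ has at least one coordinate equal to $a \in U$; hence every $\tup x_j'$ lies in $R_n := B^n \setminus (B \setminus U)^n$. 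The right-hand side, however, lies in $(B \setminus U)^n$ and is therefore outside $R_n$. This contradicts the assumption that $R_n$ is a subuniverse of $\al B^n$, so no such $c$ can exist.

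For the backward direction, assume that $\al A$ has no cube term of any arity, and construct a subalgebra $\al B \leq \al A$ together with a proper subset $U \subsetneq B$ having the blocker property. A natural starting point is the two-generated free algebra $\al F = \al F_{\al A}(\{x,y\})$: the absence of an $n$-cube term means precisely that the diagonal element $(x,\dots,x) \in F^n$ does not lie in the subalgebra of $\al F^n$ generated by $\{\tup x_1,\dots,\tup x_{2^n-1}\}$, for each $n \geq 2$. Finiteness of $\al A$ then allows a simultaneous extraction: one passes to a suitable subalgebra $\al B \leq \al A$ and identifies a subset $U \subsetneq B$ that blocks all such configurations at once.

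The main obstacle lies in this backward direction: one must define $U$ explicitly and verify that $R_k$ is a subuniverse of $\al B^k$ uniformly in $k$. The Markovi\'c--Mar\'oti--McKenzie argument uses the equivalent characterization of cube terms via edge terms; the failure of every edge-term identity yields a concrete two-variable configuration in a power of $\al A$ from which $U$ can be carved out, and the uniform subuniverse property then follows from a careful analysis of how operations act on this configuration.
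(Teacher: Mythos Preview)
The paper does not prove this theorem; it is quoted verbatim from \cite{markovic.maroti.mckenzie12} with a terminal \qed, so there is no in-paper argument to compare against. Your forward direction is correct and is the standard one-line observation.

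Your backward direction, however, is not a proof but a description of what a proof would have to accomplish. You correctly identify that the absence of an $n$-cube term means $(x,\dots,x)$ is not in the subalgebra of $\al F^n$ generated by the nonconstant $\{x,y\}$-tuples, but from there you write only that ``finiteness of $\al A$ then allows a simultaneous extraction'' and that ``one passes to a suitable subalgebra $\al B \leq \al A$ and identifies a subset $U$.'' Neither the subalgebra nor the set $U$ is ever constructed, and the final paragraph openly defers the actual work to the Markovi\'c--Mar\'oti--McKenzie paper. This is precisely the nontrivial content of the theorem: one must show that the infinitely many failures (one for each $n$) can be witnessed by a \emph{single} pair $(B,U)$, and this requires a genuine argument (in the original, via edge terms and a minimality/compactness step exploiting finiteness of $\al A$). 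As written, your proposal for this direction is a restatement of the goal rather than a proof.
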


This theorem was recently generalized to idempotent varieties that are not
neccessarily finitely generated by Kearnes and Szendrei
\cite{kearnes.szendrei15}. They also proved a~similar characterization for cube
terms of fixed arity using crosses, that is, relations of the form
\[
  \Cross (U_1,\dots,U_k) =
    \{ (x_1,\dots,x_k): x_i \in U_i \text{ for some $i$}\}.
\]
Note that $U$ is a~cube term blocker if and only if $U$ is a~proper subset of
$A$ and $\Cross(U,\dots,U)$ is a~subuniverse of $\al A^k$ for any $k$. The
result of Kearnes and Szendrei can be formulated as follows.

\begin{theorem}[{\cite[Theorems 2.5 and 3.1]{kearnes.szendrei15}}]
  \label{thm:kearnes.szendrei}
  Suppose that $\var V$ is an~idempotent variety, $\al F$ denotes the
  free algebra generated by the set $\{x,y\}$, and let $n\geq 2$.
  \begin{enumerate}
    \item[(i)] $\var V$ does not have an~$n$-cube term if and only if there exist
      $U_1$, \dots, $U_n \subset F$ such that $x \in U_i$, $y\notin U_i$ for all
      $i$, and \( \operatorname{Cross} (U_1,\dots,U_n) \) is a~subuniverse of
      $\al F^n$.
    \item[(ii)] $\var V$ does not have a~cube term if and only if there exist
      a~cube term blocker $U$ in $\al F$ such that $x \in U$, $y\notin U$.
    \qed
  \end{enumerate}
\end{theorem}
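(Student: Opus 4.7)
The plan is to prove part~(i) in detail and derive part~(ii) from it by a~compactness argument. A~convenient reformulation underlying both directions is that $\var V$ admits an~$n$-cube term if and only if the tuple $(x,\ldots,x)$ lies in $\mathrm{Sg}_{\al F^n}\{\tup v^{(S)} : \emptyset \neq S \subseteq \{1,\ldots,n\}\}$, where $\tup v^{(S)} \in \{x,y\}^n$ has $y$'s precisely in the coordinates indexed by $S$ (so $\tup v^{(\{1,\ldots,n\})} = (y,\ldots,y)$).

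For the easy direction of~(i), assume $U_1,\ldots,U_n$ satisfy the hypotheses, set $R = \Cross(U_1,\ldots,U_n)$, and suppose for contradiction that $c$ is an~$n$-cube term of arity $2^n - 1$. A~direct application of $c$ in $\al F^n$ to the tuples $\tup v^{(S)}$ is blocked because $\tup v^{(\{1,\ldots,n\})} \notin R$. I~would resolve this by iterating $c$ inside a~larger power $\al F^{nN}$: one composes $c$ with itself so that the role of the forbidden input $(y,\ldots,y)$ is absorbed by a~combination of tuples all lying in $R$. Using idempotence of $\var V$, this should produce a~term operation applied entirely to inputs in $R$ whose output lies outside $R$, contradicting closure.

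For the hard direction of~(i), assume $\var V$ has no $n$-cube term, so by the reformulation $(x,\ldots,x) \notin R_0 := \mathrm{Sg}_{\al F^n}\{\tup v^{(S)} : S \neq \emptyset\}$. Extend $R_0$ via Zorn's lemma to a~subuniverse $R \subseteq \al F^n$ maximal subject to excluding $(x,\ldots,x)$. The central task is to identify sets $U_1,\ldots,U_n \subseteq F$ with $x \in U_i$, $y \notin U_i$, such that $\Cross(U_1,\ldots,U_n)$ is a~subuniverse containing $R$; a~natural candidate is
\[
  U_i = \{ a \in F : (x,\ldots,x,a,x,\ldots,x) \notin R \text{ with $a$ in position $i$}\},
\]
or a~structurally similar variant. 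Then $x \in U_i$ follows from $(x,\ldots,x) \notin R$, while $y \notin U_i$ is witnessed by $\tup v^{(\{i\})} = (x,\ldots,x,y,x,\ldots,x) \in R_0 \subseteq R$; the subuniverse property of $\Cross(U_1,\ldots,U_n)$ is to be forced by the maximality of $R$.

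Part~(ii) is obtained from~(i) by compactness: if $\var V$ has no cube term at all, then for every $n \geq 2$ part~(i) supplies $U_1^{(n)},\ldots,U_n^{(n)}$. In the Tychonoff product $2^F$ with each factor discrete, the condition ``$\Cross(U,\ldots,U)$ is a~subuniverse of $\al F^k$'' is closed in $U$, finite intersections of these closed sets are nonempty by~(i), and compactness produces a~single $U$ with $x \in U$, $y \notin U$, working for all $k$. The main obstacle throughout is the rectangularity step in the hard direction of~(i): maximality of $R$ alone does not obviously force the complement of $R$ to be a~Cartesian product, and extracting the required cross shape needs delicate use of idempotence of $\var V$ and of the fact that $\al F$ is free on just the two generators $x,y$; this is where Kearnes and Szendrei do their main technical work.
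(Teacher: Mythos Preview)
First, note that the paper does not prove this theorem: it is quoted from \cite{kearnes.szendrei15} and closed with a~\qed, so there is no ``paper's own proof'' to compare against. What remains is to assess your sketch on its merits, and here there is a~genuine orientation error running through both directions of~(i).

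\textbf{Easy direction of (i).} Your observation that $\tup v^{(\{1,\dots,n\})}=(y,\dots,y)\notin R$ is correct, but the cure is not iteration inside $\al F^{nN}$; it is simply to swap $x$ and $y$. Set $\tup w^{(S)}$ to have $x$ in the positions indexed by $S$ and $y$ elsewhere. The cube identity, after the substitution $x\mapsto y$, $y\mapsto x$, gives $c^{\al F^n}(\tup w^{(S)}:S\neq\emptyset)=(y,\dots,y)$. Every $\tup w^{(S)}$ with $S\neq\emptyset$ has at least one coordinate equal to $x\in U_i$, hence lies in $R=\Cross(U_1,\dots,U_n)$, while $(y,\dots,y)\notin R$. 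This is the one-line contradiction (and is exactly how the paper argues in Lemma~\ref{lem:n-crosses-no-terms}). Your proposed iteration is vague and unnecessary.

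\textbf{Hard direction of (i).} Your plan is internally inconsistent. You take $R_0=\mathrm{Sg}_{\al F^n}\{\tup v^{(S)}:S\neq\emptyset\}$ and extend it to a~maximal $R$ avoiding $(x,\dots,x)$, then seek $U_i$ with $x\in U_i$, $y\notin U_i$ such that $\Cross(U_1,\dots,U_n)$ is a~subuniverse \emph{containing} $R$. But $(y,\dots,y)=\tup v^{(\{1,\dots,n\})}$ is one of your generators, so $(y,\dots,y)\in R_0\subseteq R$, whereas $(y,\dots,y)\notin\Cross(U_1,\dots,U_n)$ whenever $y\notin U_i$ for all $i$. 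No cross of the required shape can contain your $R$. The correct set-up (again the $x\leftrightarrow y$ swap) is to generate with the tuples having at least one $x$, exclude $(y,\dots,y)$, and take $U_i=\{a\in F:(y,\dots,y,a,y,\dots,y)\in R\}$; then $x\in U_i$ and $y\notin U_i$ come out right, and maximality can be brought to bear on the cross shape. Your candidate $U_i=\{a:(x,\dots,a,\dots,x)\notin R\}$ is the wrong slice for this orientation.

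\textbf{Part (ii).} Your compactness outline has a~gap: part~(i) produces, for each $n$, possibly \emph{distinct} sets $U_1^{(n)},\dots,U_n^{(n)}$, not a~single $U$ with $\Cross(U,\dots,U)$ a~subuniverse of $\al F^n$. The finite-intersection hypothesis for Tychonoff therefore does not follow from~(i) as stated; you need an additional argument (or a~direct Zorn argument on the blocker~$U$) before compactness applies.
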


The item (ii) above was also proved by McKenzie and Moore
\cite{mckenzie.moore17}.

Next, we define concrete relational structures that we will use later. Unlike in
\cite{kearnes.szendrei15}, we understand crosses as multi-sorted relational
structures; we encode them as 1-sorted structures whose universes are products
of the original sorts.

\begin{definition} We say that a~structure $\rel C = (C_1\times \dots \times
C_n; \alpha_1,\dots,\alpha_n, R)$ is an~\emph{$n$-cross} if
$\alpha_1$, \dots, $\alpha_n$ are kernels of projections on the corresponding
coordinate, and for each $i \in \{1,\dots,n\}$, there exists $U_i \subseteq C_i$
such that
\[
  R = \{ (a_1,\dots,a_n) \in C_1 \times \dots \times C_n :
    \text{$a_i \in U_i$ for some $i$} \}.
\]
\end{definition}

When defining or talking about $n$-crosses, we will usually only speak about
the relation $R$ since the other relations are implicitly given as kernels of
projections.

\begin{definition} We say that a~relational structure $\rel B = (B; (R_k)_{k\in
\mathbb N})$ is a~\emph{cube term blocker} if there exists $U \subset B$, $U\neq
\emptyset$ such that
\[
  R_k = \{ (a_1,\dots,a_k) \in B^k :
    \text{$a_i \in U$ for some $i$} \}.
\]
\end{definition}

To distinguish between the above definition and the original definition of cube
term blockers, we will always say `an algebra has a~cube term blocker' or `$U$
is a~cube term blocker of an algebra $\al A$' if we refer to the original
definition.
Note that an~algebra $\al A$ has a~cube term blocker $U\subseteq A$ if and
only if it is compatible with some~cube term blocker $\rel B$.

The two transfinite chains of clones will be defined using the following
relational structures:
\begin{definition}
  Let $\kappa$ be an~infinite cardinal, and fix $U_\kappa \subseteq \kappa$ with
  $|U_\kappa| = |\kappa \setminus U_\kappa| = \kappa$. We define a~relation $R_k
  \subseteq \kappa^k$ as
  \[
    R_k = \{ (a_1,\dots,a_k) \in \kappa^k :
      \text{$a_i \in U_\kappa$ for some $i$} \}.
  \]
  The special cube term blocker is defined as $\rel B_\kappa = ( \kappa;
  (R_k)_{k\in \mathbb N} )$, and the special $n$-cross as $\rel C_\kappa = (
  \kappa^n; \alpha_1^{\rel C_\kappa},\dots,\alpha_n^{\rel C_\kappa}, R_n)$ where
  $\alpha_i^{\rel C_\kappa}$ is a~kernel of the $i$-th projection.
\end{definition}

For coloring we will use the smallest cube term blocker $\rel B_0$ and its
reduct $\rel B_n$:
\begin{definition} We define $\rel B_0$ to be the cube term blocker on the set
$\{0,1\}$ defined by $U = \{1\}$, i.e.,
\[
  R_k^{\rel B_0} = \{ 0,1 \}^k \setminus \{ (0,\dots,0) \}
\]
for $k \in \mathbb N$.
We define $\rel B_n$ to be the relational structure $(\{0,1\}; R^{\rel
B_n})$ with $R^{\rel B_n} = R_n^{\rel B_0}$.
\end{definition}

\subsection{Coloring and linear varieties}

We provide characterizations in the means of colorings both for the Mal'cev
condition of having a~cube term of some arity and the strong Mal'cev condition
of having an~$n$-cube term.

\begin{proposition} \label{prop:cube-term-coloring}
The following is true for every variety $\var V$ and all $n\geq 2$.
\begin{enumerate}
  \item[(i)] $\var V$ does not have an~$n$-cube term if and only if it has strongly
    $\rel B_n$-colorable terms.
  \item[(ii)] $\var V$ does not have a~cube term if and only if it has
    strongly $\rel B_0$-colorable terms.
\end{enumerate}
\end{proposition}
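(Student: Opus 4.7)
The plan is to reduce both (i) and (ii) to one key observation: for each $k \geq 2$, the all-$x$ tuple $(x,\dots,x) \in F^k$ lies in $R_k^{\rel F}$ if and only if $\var V$ admits a $k$-cube term. Here $\al F$ is the two-generated free algebra of $\var V$ on $\{x,y\}$ (identified with $\{0,1\}$), and by Definition~\ref{def:free-structure} the relation $R_k^{\rel F}$ is the subalgebra of $\al F^k$ generated by (the image of) $R_k^{\rel B_n}$ or $R_k^{\rel B_0}$, whose generating set is exactly the non-zero $\{x,y\}^k$-tuples. So $(x,\dots,x) \in R_k^{\rel F}$ means that some term of $\var V$, applied componentwise to non-zero $\{x,y\}^k$-tuples, produces the all-$x$ tuple; a standard rearrangement then turns any such witness into a proper $(2^k-1)$-ary cube term, and conversely a $k$-cube term obviously gives such a witness.

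Granting this observation, the backward directions ($\Leftarrow$) of both parts are immediate: any strong coloring $c$ satisfies $c(x) = 0$, so if $\var V$ had a cube term of the relevant arity then $(x,\dots,x) \in R_k^{\rel F}$, and the homomorphism property would force the forbidden all-zero tuple into $R_k^{\rel B_n}$ (resp.\ $R_k^{\rel B_0}$), a contradiction. For the forward directions ($\Rightarrow$) I will use the naive coloring $c\colon F \to \{0,1\}$ defined by $c(x) = 0$ and $c(t) = 1$ for all $t \neq x$, which is visibly strong; for any $(t_1,\dots,t_k) \in R_k^{\rel F}$ in the relevant arity, the no-cube-term hypothesis combined with the key observation rules out $(t_1,\dots,t_k) = (x,\dots,x)$, so some $c(t_i) = 1$ and the image lies in $R_k^{\rel B_n}$ (resp.\ $R_k^{\rel B_0}$).

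The main (and really only) obstacle is the unary relation $R_1^{\rel B_0} = \{1\}$ appearing in (ii), whose free closure $R_1^{\rel F}$ is the subalgebra of $\al F$ generated by $y$; the naive coloring preserves $R_1$ precisely when $x$ does not lie in this subalgebra, i.e.\ when no unary term of $\var V$ satisfies $t(y) \equals x$ identically. This holds whenever $\var V$ is nontrivial, and since the trivial variety vacuously has a cube term, nontriviality is automatic in the forward direction of (ii); thus the uniform argument goes through in all required arities.
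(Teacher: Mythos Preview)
Your proof is correct and follows essentially the same approach as the paper's: both use the coloring $c(x_0)=0$, $c(f)=1$ otherwise, and both rest on the observation that $(x_0,\dots,x_0)\in R_k^{\rel F}$ is equivalent to the existence of a $k$-cube term. You are in fact more careful than the paper on two points it leaves implicit---the rearrangement from an arbitrary-arity witness in $R_k^{\rel F}$ to a genuine $(2^k-1)$-ary cube term, and the unary relation $R_1^{\rel B_0}$ in part~(ii), which the paper's ``the argument that this is really a~strong coloring is identical'' does not explicitly address.
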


\begin{proof}
(i) Let $\rel F_n$ denote the free structure generated by $\rel B_n$.  Observe
that tuples in $R^{\rel F_n}$ are exactly those $n$-tuples that can be the
result of applying some term $f$ coordinatewise to all $n$-tuples consisting of
$x_0$'s and $x_1$'s except the tuple $(x_0,\dots,x_0)$.

To prove the first implication, suppose that $\var V$ does not have a~cube term,
and define a~mapping $c\colon F_n \to B_n$ by:
\[
  c ( f ) = \begin{cases}
    0 & \text{ if $f = x_0$, and } \\
    1 & \text{ otherwise. }
  \end{cases}
\]
Since $\var V$ does not have a~cube term, $R^{\rel F_n}$ does not contain the
tuple $( x_0,\dots, x_0 )$, therefore $c$ is clearly a~coloring from $\rel
F_n$ to $\rel B_n$. It is also strong by definition.

For the other implication, suppose that $\var V$ has an~$n$-cube term $t$.
Therefore, in particular $(x_0,\dots,x_0) \in R^{\rel F_n}$. Any mapping
that maps $x_0$ to $0$ maps this tuple to the tuple $(0,\dots,0)$, therefore
there is no homomorphism $c\colon \rel F_n \to \rel B_n$ with $c(x_0) = 0$, and
in particular no strong coloring of terms of $\var V$.

(ii) Let $\rel F$ denote the free structure generated by $\rel B_0$, and note that
$\rel F$ is an expansion of every $\rel F_k$ (the free structure generated by
$\rel B_k$). Therefore, if a~variety $\var V$ has a~$k$-cube term then its
terms are not strongly $\rel B_0$-colorable since they are not even $\rel
B_k$-colorable. This yields one implication.
For the other implication, suppose that $\var V$ does not have a~cube term. We define
a~mapping $c$ the same way as in the proof of (i). The argument that this is
really a~strong coloring is identical.
\end{proof}

As a~corollary of the above, we get the following.

\begin{corollary} \label{cor:linear-n-cross}
  If $\var V$ is a~linear variety that does not have an $n$-cube term, then it
  contains an~algebra compatible with an $n$-cross of size $2^n$.
\end{corollary}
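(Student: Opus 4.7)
The plan is to combine Proposition~\ref{prop:cube-term-coloring}(i) with Lemma~\ref{lem:linear-varieties} to extract a two-element compatible algebra, and then to show that the $n$-th direct power of this algebra carries the desired $n$-cross structure.

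First I would apply Proposition~\ref{prop:cube-term-coloring}(i): since $\var V$ has no $n$-cube term, its terms are strongly $\rel B_n$-colorable. Because $\var V$ is linear, Lemma~\ref{lem:linear-varieties} (equivalence of (3) and (5)) yields an algebra $\al B \in \var V$ with universe $\{0,1\}$ that is compatible with $\rel B_n$; that is, the single $n$-ary relation $R^{\rel B_n} = \{0,1\}^n \setminus \{(0,\dots,0)\}$ is preserved by every operation of $\al B$.

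Next I would consider the $n$-th power $\al B^n \in \var V$, whose universe has size $2^n$, and equip this universe with the $n$-cross $\rel C$ defined by $C_i = \{0,1\}$ and $U_i = \{1\}$ for each $i$. Concretely, $\rel C = (\{0,1\}^n;\alpha_1,\dots,\alpha_n,R)$ where $\alpha_i$ is the kernel of the $i$-th projection $\{0,1\}^n \to \{0,1\}$ and $R = \{(a_1,\dots,a_n) : a_i = 1 \text{ for some } i\}$. The kernels $\alpha_i$ are automatically congruences of $\al B^n$ as kernels of homomorphisms, so the only thing to check is that $R$ is a subuniverse of $\al B^n$.

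The key observation—and the only nontrivial step—is that compatibility of $\al B$ with the $n$-ary relation $R^{\rel B_n}$ translates precisely into $R$ being a subuniverse of $\al B^n$: applying a $k$-ary basic operation $f$ of $\al B$ coordinatewise to $k$ tuples in $R \subseteq \{0,1\}^n$ is exactly the same calculation as applying $f$ to $k$ tuples in $R^{\rel B_n}$, since $R$ and $R^{\rel B_n}$ are literally the same set. Thus $\al B^n$ is an algebra in $\var V$ compatible with an $n$-cross of size $2^n$, completing the proof. There is essentially no obstacle here; the corollary is really a direct unfolding of definitions once the correspondence $R^{\rel B_n} \leftrightarrow R$ via the isomorphism of powers is noticed.
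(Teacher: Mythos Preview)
Your proposal is correct and follows essentially the same approach as the paper: obtain a two-element algebra compatible with $\rel B_n$ via Proposition~\ref{prop:cube-term-coloring}(i) and Lemma~\ref{lem:linear-varieties}, pass to its $n$-th power, and observe that the relation $R^{\rel B_n}$, reinterpreted on $\{0,1\}^n$, together with the projection kernels gives the desired $n$-cross. The paper's proof is slightly terser but makes exactly the same moves.
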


\begin{proof} From the above and Lemma~\ref{lem:linear-varieties}, we know that
$\var V$ contains an algebra $\al B_n$ compatible with $\rel B_n$. Let $\al C =
\al B_n^n$. We claim that $\al C$ is
compatible with an $n$-cross $\rel C$ with $R^{\rel C} = R^{\rel B_n}$ where
$R^{\rel C}$ is understood as a~unary relation on $C$. Indeed, the operations
of $\al C$ are compatible with kernels of projections since they are defined
component-wise; they are also compatible with $R^{\rel C}$ since component-wise
they act as operations of $\al B_n$ and are therefore compatible with $R^{\rel
B_n}$. Finally, $|C| = 2^n$ since $|B_n| = 2$.
\end{proof}

\subsection{Idempotent varieties}

We encounter a~similar problem as in Section~\ref{sec:modularity.idempotent}: we
are unable to find a~largest idempotent variety without a~cube term (or $n$-cube
term). We will circumvent this problem in a~similar way. The first step is the
following corollary of Theorem~\ref{thm:kearnes.szendrei}.

\begin{corollary} \label{cor:crosses-and-blockers}
  Fix $n\geq 2$ and let $\var V$ be an idempotent variety.
  \begin{enumerate}
    \item[(i)] If $\var V$ does not have an~$n$-cube term, then it contains
      an algebra compatible with an~$n$-cross.
    \item[(ii)] If $\var V$ does not have a~cube term, then it contains
      an algebra compatible with a~cube term blocker.
  \end{enumerate}
\end{corollary}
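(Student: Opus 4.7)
The plan is to obtain both statements as direct reformulations of Theorem~\ref{thm:kearnes.szendrei} in the language of compatibility with the relational structures defined in the previous subsection. In both cases the witnessing algebra will be either the free algebra $\al F$ of $\var V$ on $\{x,y\}$, or its $n$-th power.

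For (i), I would apply Theorem~\ref{thm:kearnes.szendrei}(i) to produce subsets $U_1,\dots,U_n \subset F$ with $x\in U_i$ and $y\notin U_i$ for each $i$, such that $R := \Cross(U_1,\dots,U_n)$ is a subuniverse of $\al F^n$. The witnessing algebra is then $\al F^n\in \var V$, and the $n$-cross is
\[
  \rel C = (F^n;\ \alpha_1,\dots,\alpha_n,\ R),
\]
where each $\alpha_i$ is the kernel of the projection $F^n\to F$ onto the $i$-th coordinate. By construction $\rel C$ is an $n$-cross (take the sorts to be $F$ and the distinguished subsets to be $U_i$), and compatibility of $\al F^n$ with $\rel C$ is immediate since each $\alpha_i$ is a congruence of $\al F^n$ and $R$ is a subuniverse of $\al F^n$ by hypothesis.

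For (ii), the plan is to apply Theorem~\ref{thm:kearnes.szendrei}(ii) to obtain a proper subset $U\subset F$, with $x\in U$ and $y\notin U$, that is a cube term blocker of $\al F$; i.e., for every $k\in\mathbb N$, the relation $R_k := \Cross(U,\dots,U)\subseteq F^k$ is a subuniverse of $\al F^k$. Then $\al F\in \var V$ is itself compatible with the cube term blocker structure $\rel B = (F; (R_k)_{k\in\mathbb N})$, because $R_k$ being a subuniverse of $\al F^k$ is exactly the statement that every operation of $\al F$ preserves $R_k$.

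There is essentially no genuine obstacle: the corollary is a bookkeeping translation between the multi-sorted formulation of Kearnes--Szendrei and the single-sorted relational structures defined above. The only mild care needed, in part (i), is packaging an $n$-ary cross relation on $F$ as a unary relation on the product $F^n$, which is exactly what the extra equivalences $\alpha_1,\dots,\alpha_n$ encode.
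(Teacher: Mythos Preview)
Your proposal is correct and matches the paper's own proof essentially line for line: in (i) the paper also takes $\al F^n$ as the algebra and packages $\Cross(U_1,\dots,U_n)$ as the unary relation $R$ on $F^n$ together with the projection kernels, and in (ii) it likewise takes $\al F$ together with the relations $R_k = F^k\setminus(F\setminus U)^k$. The only cosmetic difference is that the paper phrases (i) by pointing back to the analogous packaging in Corollary~\ref{cor:linear-n-cross}, whereas you spell it out directly.
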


\begin{proof}
  (i) Suppose that $\var V$ does not have an~$n$-cube term, and let $\al F$
  denote the two-generated free algebra in $\var V$. From
  Theorem~\ref{thm:kearnes.szendrei}(i), we know that there are
  $U_1,\dots,U_n\subseteq F$ such that $\Cross (U_1,\dots,U_n)$ is a~compatible
  relation of $\al F$. We obtain an~algebra compatible with an~$n$-cross by
  repeating the argument from the proof of Corollary~\ref{cor:linear-n-cross}:
  Let $\al A = \al F^n$, and define an~$n$-cross $\rel A$ by taking $R^{\rel A}
  = \Cross^{\al F}(U_1,\dots,U_n)$.

  (ii) This is immediate from Theorem~\ref{thm:kearnes.szendrei}(ii): the
  algebra is $\al F$, and the cube term blocker is
  \(
    \rel F = ( F; \big( F^n \setminus (F \setminus U)^n \big)_{n\in N} ).
  \)
\end{proof}

\begin{proposition}
  \label{prop:crosses-of-arbitrary-size} \label{prop:cube-term-blockers-of-arbitrary-size}
  Let $\var V$ be a~variety and $n\geq 2$.
  \begin{itemize}
    \item[(i)] If $\var V$ contains an algebra compatible with
      an~$n$-cross $\rel C$, then it contains an algebra compatible with $\rel
      C_\kappa$ for all $\kappa \geq \aleph_0 + |\rel C|$
    \item[(ii)] If $\var V$ contains an algebra compatible with a~cube
      term blocker $\rel B$, then it contains an algebra compatible with $\rel
      B_\kappa$ for all $\kappa \geq \aleph_0 + |\rel B|$.
  \end{itemize}
\end{proposition}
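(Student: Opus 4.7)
The plan is to follow the template of Proposition~\ref{prop:huge-pentagons}: starting from the given compatible algebra, iteratively double it using Tarski's construction from Section~\ref{sec:tarski-construction} while carrying along the relevant distinguished sets, verify compatibility by transfinite induction on the doubling stage $\lambda$, and identify the final structure with $\rel C_\kappa$ or $\rel B_\kappa$ for $\kappa = |\lambda|$.

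For~(i), the first step is to observe that an algebra $\al C$ compatible with an $n$-cross $\rel C = (C_1 \times \dots \times C_n; \alpha_1, \dots, \alpha_n, R)$ decomposes as the direct product $\al C_1 \times \dots \times \al C_n$ where $\al C_i = \al C/\alpha_i$: because the meet of the projection kernels $\alpha_i$ is $0_C$, the $i$-th coordinate of any operation depends only on the $i$-th coordinates of its inputs. In particular each $\al C_i \in \var V$. Next, I would apply Tarski's construction independently to each $\al C_i$ with distinguished set $U_i$, obtaining for every ordinal $\lambda$ an algebra $\al C_{i,\lambda} \in \var V$ and a set $U_{i,\lambda} \subseteq C_{i,\lambda}$ with $|C_{i,\lambda}| = |U_{i,\lambda}| = |C_{i,\lambda} \setminus U_{i,\lambda}| = |\lambda|$ whenever $|\lambda| \geq |\rel C|$ is infinite. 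Setting $\al C_\lambda = \prod_{i=1}^n \al C_{i,\lambda}$ and $R_\lambda = \Cross(U_{1,\lambda}, \dots, U_{n,\lambda})$, I would prove by transfinite induction on $\lambda$ that $R_\lambda$ is compatible with $\al C_\lambda$: the base case is the hypothesis, the limit case is a routine compactness argument, and for the successor step I would use the identification $\al C_{i,\lambda+1} \cong \al C_{i,\lambda}^2$ under which $U_{i,\lambda+1}$ corresponds to $C_{i,\lambda} \times U_{i,\lambda}$, so that a tuple in $\prod_i C_{i,\lambda+1}$ lies in $R_{\lambda+1}$ precisely when the tuple of its ``second halves'' lies in $R_\lambda$---whence coordinate-wise compatibility at stage $\lambda+1$ reduces to the inductive hypothesis at stage $\lambda$. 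Finally, choosing $\lambda$ with $|\lambda| = \kappa$ and any bijections $C_{i,\lambda} \to \kappa$ sending $U_{i,\lambda}$ onto $U_\kappa$ transports $\al C_\lambda$ to an algebra in $\var V$ on $\kappa^n$ compatible with $\rel C_\kappa$.

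Part~(ii) runs the same template on a single algebra. I would apply Tarski's construction to $\al B$ with the distinguished set $U$, and set $R_{k,\lambda} = B_\lambda^k \setminus (B_\lambda \setminus U_\lambda)^k$ for every $k$. A single transfinite induction on $\lambda$, uniform in $k$, shows each $R_{k,\lambda}$ is compatible with $\al B_\lambda$: in the successor step, under $B_{\lambda+1} \cong B_\lambda^2$, membership in $U_{\lambda+1}$ is controlled by the second coordinate alone, so membership of a $k$-tuple in $R_{k,\lambda+1}$ is equivalent to the corresponding $k$-tuple of second coordinates lying in $R_{k,\lambda}$, where the inductive hypothesis applies. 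The identification with $\rel B_\kappa$ for $\kappa = |\lambda|$ is analogous to part~(i).

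The main technical obstacle is the bookkeeping in the successor step, where one must correctly translate the compatibility condition for $R_{\lambda+1}$ (or $R_{k,\lambda+1}$) back to the inductive hypothesis through the doubling isomorphism. Once the identifications are set up so that the new relation is essentially the pullback of the old one along the ``second-half'' projection, the verification becomes a routine coordinate-wise calculation, closely parallel to the handling of the congruence $\gamma_{\lambda+1}$ in the proof of Proposition~\ref{prop:huge-pentagons}.
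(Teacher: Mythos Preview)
Your proposal is correct and follows essentially the same route as the paper: apply Tarski's construction sort-by-sort (for (i)) or to the single algebra (for (ii)), carry the distinguished sets through, verify compatibility by transfinite induction with the successor step reducing to a ``second-half'' projection, and then identify the result with $\rel C_\kappa$ or $\rel B_\kappa$ via bijections matching the $U$-sets. The only cosmetic difference is that the paper proves (ii) first and then says (i) is analogous, whereas you do them in the opposite order.
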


\begin{proof}
  Again, we will use Tarski's construction. First, we prove (ii): Suppose that
  $\al B$ is an~algebra in $\var V$ with a~cube term blocker $U$ (recall that
  having a~cube term blocker is equivalent to being compatible with a~relational
  structure which is a~cube term blocker). 
  Put $\al B_0 = \al B$ and $U_0 = U$, and let $\al B_\lambda$ and $U_\lambda$
  denote the algebra and the set obtained by Tarski's construction in $\lambda$
  steps. We claim that $U_\lambda$ is a~cube term blocker of $\al B_\lambda$. We
  will prove that by transfinite induction on $\lambda$: It is true for $\lambda
  = 0$. For ordinal successor $\lambda + 1$, we identify $\al B_{\lambda + 1}$
  with $\al B_\lambda^2$, and $U_{\lambda+1}$ with $B_\lambda \times U_\lambda$.
  We need to prove that $R_k = B_{\lambda+1}^k \setminus
  (B_{\lambda+1} \setminus U_{\lambda+1})^k$ is compatible for all $k\in \mathbb
  N$. But this is true since \(
      B_\lambda^k \setminus ( B_\lambda \setminus U_\lambda )^k
  \) is and 
  \[
    R_k = \{ ((a_1,b_1),\dots,(a_k,b_k)) \in (B_\lambda^2)^k : (b_1,\dots,b_n) \in
      B_\lambda^k \setminus ( B_\lambda \setminus U_\lambda )^k \}.
  \]
  For limit $\lambda$, we get the statement by the standard compactness
  argument.
  In particular, we proved that $U_\lambda$ is a~cube term blocker in $\al
  B_\lambda$ for $|\lambda| = \kappa$.
  For such $\lambda$, we get that the structure $(B_\lambda,
  (R_{k,\lambda})_{k\in \mathbb N})$, where
  $R_{k,\lambda} = B_\lambda^k \setminus ( B_\lambda \setminus U_\lambda)^k$,
  is isomorphic to $\rel B_\kappa$. This is given by the definition of
  $R_{k,\lambda}$ and that $|B_\lambda| = |U_\lambda| = |B_\lambda \setminus
  U_\lambda| = \kappa$. This completes the proof of item (ii).

  The item (i) is obtained by repeating the above argument while
  using Tarski's construction for all $n$ sorts of the $n$-cross. Let
  $\al C$ be the algebra compatible with $\rel C$, let $U_i$ for $i=1,\dots,n$
  be the sets defining the relation $R^{\rel C}$, and let $\al C_1 \times \dots
  \times \al C_n$ be the factorization of $\al C$ so that $\alpha_i^{\rel C}$ is
  the kernel of projection to $\al C_i$. We define algebras $\al C_{i,\lambda}$
  (where $i = 1,\dots, n$ and $\lambda$ ranges through all ordinals) together
  with sets $U_{i,\lambda}$ by Tarski's construction starting with $\al C_{i,0}
  = \al C_i$ and $U_{i,0} = U_i$. By a~similar argument as for item (ii), we
  obtain a~structure of $n$-cross $\rel C'_\lambda$ on the set $C_{1,\lambda}
  \times \dots \times C_{n,\lambda}$ with $R^{\rel C'_\lambda}$ being the
  relation
  \[
    \{ (c_1,\dots,c_n) \in C_{1,\lambda}\times \dots \times C_{n,\lambda} : 
    c_i \in U_{i,\lambda} \text{ for some $i$ } \},
  \]
  and also that this structure is compatible
  with $\al C_{1,\lambda} \times \dots \times \al C_{n,\lambda}$ for all
  $\lambda$. Finally, given that $\lambda$ satisfies $|\lambda| = \kappa$, we claim that
  $\rel C'_\lambda$ is isomorphic to $\rel C_\kappa$. One such isomorphism can
  be defined component-wise, taking for the $i$-th component $a_i$ any bijection
  from $C_{i,\lambda}$ to $\kappa$ that maps $U_{i,\lambda}$ onto $U_\kappa$.
\end{proof}

\begin{lemma} \label{lem:n-crosses-no-terms}
  If a~variety contains an algebra compatible with an $n$-cross, it does not
  have an~$n$-cube term.
\end{lemma}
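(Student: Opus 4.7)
The plan is to argue by contradiction. Suppose $\var V$ has an $n$-cube term $c$ and contains an algebra $\al A$ compatible with an $n$-cross $\rel C = (C_1\times\cdots\times C_n;\alpha_1,\dots,\alpha_n,R)$, with $R$ determined by subsets $U_i\subseteq C_i$. The statement is only nontrivial when every $U_i$ is a proper nonempty subset of $C_i$ (otherwise $R$ is either the full product or really an $m$-cross for some $m<n$), so I will assume this throughout.

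The first step is to exploit compatibility of $c^{\al A}$ with each kernel $\alpha_i$ to split $c^{\al A}$ coordinatewise as $(c_1,\dots,c_n)$, where each $c_j\colon C_j^{2^n-1}\to C_j$ is a term operation of the quotient $\al A/\alpha_j\in\var V$. In particular, every $c_j$ itself satisfies the $n$-cube term identities inside the algebra $\al A/\alpha_j$.

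Next, I choose witnesses $a_i\in U_i$ and $b_i\in C_i\setminus U_i$ for each $i\in\{1,\dots,n\}$, and for each nonzero $\sigma\in\{0,1\}^n$ I form the tuple $\tup v_\sigma\in C_1\times\cdots\times C_n$ whose $k$-th coordinate is $a_k$ if $\sigma_k=1$ and $b_k$ if $\sigma_k=0$. Since $\sigma\neq 0^n$, there is at least one coordinate $k$ with $(\tup v_\sigma)_k=a_k\in U_k$, hence $\tup v_\sigma\in R$.

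Finally, I apply $c^{\al A}$ to the $2^n-1$ tuples $\tup v_\sigma$ in the order prescribed by the cube-term identity. On the $k$-th coordinate the input to $c_k$ is the sequence $(\tup v_\sigma)_k$ indexed by nonzero $\sigma$; under the dictionary $y\leftrightarrow a_k$, $x\leftrightarrow b_k$, this is exactly the input pattern of the $n$-cube identity along the $k$-th factor, so $c_k$ outputs $b_k$. Hence $c^{\al A}$ sends the family to $(b_1,\dots,b_n)$, which lies outside $R$ since no $b_k$ is in $U_k$, contradicting compatibility of $c$ with $R$. There is no genuine obstacle here; the cross is designed precisely to fail the cube-term identity, and the argument is the direct dual of the definition. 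The only mild point requiring care is the coordinatewise bookkeeping that matches the cube-term pattern on the $k$-th coordinate to the choice $a_k\leftrightarrow y$, $b_k\leftrightarrow x$.
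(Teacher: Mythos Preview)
Your proof is correct and follows essentially the same approach as the paper's: both argue by contradiction, factor the algebra coordinatewise via the kernels $\alpha_i$, pick $a_i\in U_i$ and $b_i\in C_i\setminus U_i$, apply the cube term to the $2^n-1$ tuples in $R$ built from these elements, and obtain the forbidden output $(b_1,\dots,b_n)\notin R$. Your explicit indexing by $\sigma\in\{0,1\}^n\setminus\{0^n\}$ and your remark isolating the nontrivial case (each $U_i$ proper and nonempty) make the bookkeeping slightly more transparent than the paper's version, but the argument is the same.
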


\begin{proof}
  For the contrary, suppose that $t$ is a~cube term and let $\rel C$ be
  an~$n$-cross. We claim that no algebra $\al C$ in the variety is compatible
  with $\rel C$. To prove that, suppose that $\al C$ is an algebra on $C$ whose
  operations are compatible with $\alpha_i^{\rel C}$ for $i = 1,\dots,n$.
  Therefore, $\al C$ factors as $\al C_1 \times \dots \times \al C_n$ with
  $\alpha_i^{\al C}$ being the kernels of projections.  Let $U_1\subset C_1$,
  \dots, $U_n\subset C_n$ be such that
  \[
    R^{\rel C} = \{ (c_1,\dots,c_n) \in C_1\times \dots \times C_n :
      c_i \in U_i \text{ for some $i$ } \},
  \]
  and choose $a_i\in U_i$ and $b_i\in C_i \setminus U_i$.
  If we apply $t^{\al C}$ on the tuples from the set
  \[
    \{ \{a_1,b_1\} \times \dots \times \{a_n,b_n\} \} \setminus
    \{(b_1,\dots,b_n)\} \subseteq R^{\rel C}
  ,\]
  we obtain $(b_1,\dots,b_n) \notin R^{\rel C}$. This is due to the fact that
  $t^{\al C}$ acts coordinate-wise as $t^{\al C_i}$ and $t$ is a~cube term of
  the variety. This concludes that $t^{\al C}$ is not compatible with $R^{\rel
  C}$, and consequently $\al C$ is not compatible with $\rel C$.
\end{proof}

\subsection{Proof of Theorem~\ref{thm:overview-introduction}(iv)}

We have proven that, given a~variety $\var V$ that is either linear, or
idempotent and does not have an $n$-cube term, $\var V$ has an~algebra
compatible with an~$n$-cross $\rel C^{\var V}$ (see
Corollaries~\ref{cor:linear-n-cross} and \ref{cor:crosses-and-blockers}(i)).
Given two such varieties $\var V$ and $\var W$, and $n$-crosses $\rel C^{\var
V}$ and $\rel C^{\var W}$, we know from
Proposition~\ref{prop:crosses-of-arbitrary-size}(i) that both $\var V$ and $\var
W$ contain an~algebra compatible with the $n$-cross $\rel C_\kappa$ for all
infinite cardinals $\kappa \geq |C^{\var V}| + |C^{\var W}|$. Therefore, by
Lemma~\ref{lem:joins} also their interpretability join does. Finally,
Lemma~\ref{lem:n-crosses-no-terms} gives that $\var V \join \var W$ does not
have an~$n$-cube term. \qed

\subsection{Cofinal chains}

We will discuss some properties of the transfinite sequences
\(
  \clo B_0, \clo B_{\aleph_0}, \clo B_{\aleph_1}, \dots \) and \(
  \clo C_{\aleph_0}, \clo C_{\aleph_1}, \dots
\)
where $\clo B_\kappa$ are polymorphism clones of cube term blockers $\rel
B_\kappa$, and $\clo C_\kappa$ are polymorphism clones of the $n$-crosses $\rel
C_\kappa$ for a~fixed $n\geq 2$.
We show that these sequences form strictly increasing chains in the lattice of
clones, and as a~corollary thereof, we obtain that there is no maximal (in the
interpretability order) idempotent variety without an $n$-cube term.

\begin{corollary} Let $\lambda < \kappa$ be two infinite cardinals, and $n\geq
2$. Then
  \begin{enumerate}
    \item[(i)] there is a~clone homomorphism from $\clo C_\lambda$ to $\clo
    C_\kappa$, and 
    \item[(ii)] there is a~clone homomorphism from $\clo B_\lambda$ to $\clo B_\kappa$.
  \end{enumerate}
\end{corollary}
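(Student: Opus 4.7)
The plan is to mimic verbatim the argument of Corollary~\ref{cor:modularity-chain}, applied to $n$-crosses in item~(i) and to cube term blockers in item~(ii), using Proposition~\ref{prop:crosses-of-arbitrary-size} in place of Proposition~\ref{prop:huge-pentagons}. So the two items will have essentially identical proofs, differing only in which of the two parts of Proposition~\ref{prop:crosses-of-arbitrary-size} is invoked.

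For item~(i), I would start by considering the variety of actions of $\clo C_\lambda$. By Lemma~\ref{lem:2.1} (equivalence of (2) and (3)), this variety contains an algebra compatible with $\rel C_\lambda$, namely the natural action of $\clo C_\lambda$ on its own underlying set. Since $\rel C_\lambda$ is itself an $n$-cross and $|C_\lambda| = \lambda^n = \lambda$ (as $\lambda$ is infinite), we have $\kappa \geq \aleph_0 + |\rel C_\lambda|$, so Proposition~\ref{prop:crosses-of-arbitrary-size}(i) applies and yields an algebra in the variety of actions of $\clo C_\lambda$ compatible with $\rel C_\kappa$. Invoking Lemma~\ref{lem:2.1} again (this time the implication (3) $\to$ (1)) with $\var V$ equal to the variety of actions of $\clo C_\lambda$ and $\rel B = \rel C_\kappa$ produces a clone homomorphism from $\clo C_\lambda$ to $\clo C_\kappa$, which is what we want.

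For item~(ii), the argument is the same with $\rel C$ replaced by $\rel B$: the variety of actions of $\clo B_\lambda$ contains $\rel B_\lambda$ itself as a compatible relational structure, and $|B_\lambda| = \lambda \leq \kappa$, so Proposition~\ref{prop:crosses-of-arbitrary-size}(ii) gives an algebra in that variety compatible with $\rel B_\kappa$; a final application of Lemma~\ref{lem:2.1} converts this into a clone homomorphism $\clo B_\lambda \to \clo B_\kappa$.

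There is no real obstacle here; the only thing worth double-checking is the size bookkeeping, i.e.\ that $|\rel C_\lambda| = \lambda$ and $|\rel B_\lambda| = \lambda$ for infinite $\lambda$, which ensures that the hypothesis $\kappa \geq \aleph_0 + |\rel C_\lambda|$ (resp.\ $\kappa \geq \aleph_0 + |\rel B_\lambda|$) of Proposition~\ref{prop:crosses-of-arbitrary-size} is automatic from $\lambda < \kappa$. The proof is thus a direct translation of the proof of Corollary~\ref{cor:modularity-chain}, and the subsequent strict increase along the chain (and hence absence of a largest idempotent variety without an $n$-cube term) would be obtained analogously to Proposition~\ref{prop:modularity-large-algebras} and Corollary~\ref{cor:no-strongest-non-modular-variety}, though those statements are beyond what the present corollary asserts.
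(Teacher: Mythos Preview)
Your proposal is correct and follows exactly the paper's approach: the paper's proof is the single sentence ``The statement follows from Proposition~\ref{prop:crosses-of-arbitrary-size} in the same way as Corollary~\ref{cor:modularity-chain} follows from Proposition~\ref{prop:huge-pentagons},'' and you have simply unpacked that analogy explicitly, including the (correct) cardinality check $|C_\lambda|=\lambda^n=\lambda$ and $|B_\lambda|=\lambda$.
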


\begin{proof}
  The statement follows from Proposition~\ref{prop:crosses-of-arbitrary-size} in
  the same way as Corollary~\ref{cor:modularity-chain} follows from
  Proposition~\ref{prop:huge-pentagons}.
\end{proof}

In order to prove that there is no largest idempotent variety without a~cube
term, we need to restrict to idempotent reducts of the discussed clones.

\begin{lemma}
  \label{lem:cube-term-large-algebras} \label{lem:previous-proposition}
  Let $\kappa$ be an infinite cardinal. The idempotent reduct of
  $\clo B_\kappa$ has no action on any set $C$ with $1 < | C | < \kappa$.
\end{lemma}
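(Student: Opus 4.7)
The plan is to mimic the proof of Proposition~\ref{prop:modularity-large-algebras} and exhibit a family of identities that is satisfied in $\idclo B_\kappa$ but unsatisfiable in any non\hyp{}trivial clone on a set of cardinality strictly less than $\kappa$. As there, I introduce binary function symbols $f_i$ for $i\in\kappa$ and ternary symbols $p_{i,j},q_{i,j},r_{i,j}$ for $i\neq j$, and I impose the Mal'cev\hyp{}type chain identities
\begin{align*}
  x &\equals p_{i,j}(x,f_j(x,y),y), \\
  p_{i,j}(x,f_i(x,y),y) &\equals q_{i,j}(x,f_j(x,y),y), \\
  q_{i,j}(x,f_i(x,y),y) &\equals r_{i,j}(x,f_j(x,y),y), \\
  r_{i,j}(x,f_i(x,y),y) &\equals y,
\end{align*}
together with $f_i(x,x)\equals x$. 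A telescoping argument identical to the one in Proposition~\ref{prop:modularity-large-algebras} shows that in any idempotent algebra satisfying these identities, whenever $a\neq b$ the elements $f_i(a,b)$ for $i\in\kappa$ are pairwise distinct, hence such an algebra has cardinality at least $\kappa$.

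To realize these identities in $\idclo B_\kappa$, I fix pairwise distinct $c_i\in U_\kappa$ for $i\in\kappa$ and an auxiliary element $c\in U_\kappa$. I define $f_i(x,y)=x$ if $x=y$ and $f_i(x,y)=c_i$ otherwise, and I set
\begin{align*}
  p_{i,j}(x,y,z) &= \begin{cases} x & \text{if } y=f_j(x,z), \\ c & \text{otherwise;}\end{cases} \\
  q_{i,j}(x,y,z) &= \begin{cases} x & \text{if } x=y=z, \\ c & \text{otherwise;}\end{cases} \\
  r_{i,j}(x,y,z) &= \begin{cases} z & \text{if } y=f_i(x,z), \\ c & \text{otherwise.}\end{cases}
\end{align*}
All operations are idempotent by inspection, and verification of the four chain identities reduces to the cases $x=y$ (where every relevant term collapses to $x$) and $x\neq y$ (where $f_i(x,y)=c_i\neq c_j=f_j(x,y)$, so each identity simplifies on both sides to the same value, either $c$ or else $x$/$y$ dictated by the outer identity).

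The main point to check is that every operation above preserves each relation $R_m$ of $\rel B_\kappa$. For $f_i$ and $p_{i,j}$ this holds uniformly because, given tuples in $R_m$, one may pick any coordinate $k$ at which the first-argument tuple has an entry in $U_\kappa$ and observe that the value produced at coordinate $k$ lies in $\{x_k, c_i\}$ or $\{x_k, c\}$, in either case inside $U_\kappa$; the case of $r_{i,j}$ is symmetric, using the third-argument tuple. For $q_{i,j}$, any coordinate $k$ at which the three tuples do not all agree produces the value $c\in U_\kappa$, and if all three tuples coincide then one uses a coordinate at which they share a value in $U_\kappa$. The only mildly delicate step here is designing $q_{i,j}$ simply enough to be a polymorphism while still fitting the chain, which is why I let it depend only on whether all three arguments are equal.

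Putting the pieces together, a clone homomorphism $\xi\colon\idclo B_\kappa\to\mathrm{Clo}(C)$ would transport the above identities into $C$; the chain argument applied to any $a\neq b\in C$ would then produce $\kappa$ pairwise distinct elements $\xi(f_i)(a,b)$ of $C$, contradicting $|C|<\kappa$. Hence $\idclo B_\kappa$ has no action on any $C$ with $1<|C|<\kappa$. \qed
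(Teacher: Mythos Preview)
Your proof is correct and follows essentially the same strategy as the paper: exhibit a family of identities witnessed by polymorphisms of $\rel B_\kappa$ that forces the values $f_i(a,b)$, $i\in\kappa$, to be pairwise distinct in any non-trivial model. The only difference is that you transplant the length-four chain from Proposition~\ref{prop:modularity-large-algebras} verbatim (with $p,q,r$), whereas the paper observes that the simpler relational structure of $\rel B_\kappa$ allows a shorter length-three chain using only $p_{i,j}$ and $q_{i,j}$; your extra middle operation $q_{i,j}(x,y,z)=x$ if $x=y=z$ else $c$ is harmless but unnecessary. Your polymorphism checks (first argument for $p$, third for $r$, and the ``all-equal-or-$c$'' argument for $q$) are all valid, and your choice of $c_i\in U_\kappa$ is in fact slightly cleaner than the paper's.
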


\begin{proof}
Let $\idclo B_\kappa$ denote the idempotent reduct of $\clo B$. We will show
that if $\clo B_\kappa$ has an~action on a~set $C$ of size at least $2$ then
$|C| \geq \kappa$.

To prove that we find identities that are satisfied in $\idclo B_\kappa$ but
cannot be satisfied by functions on a~set of size smaller then $\kappa$ unless
this set has only one element. These identities are very similar to those used
in Proposition~\ref{prop:modularity-large-algebras}. They use binary
symbols $f_i$, $i\in \kappa$ and ternary symbols $p_{i,j}$, $q_{i,j}$, $i,j \in
\kappa$:
\begin{align*}
  x &\equals p_{i,j}(x,f_j(x,y),y), \\
  p_{i,j}(x,f_i(x,y),y) &\equals q_{i,j}(x,f_j(x,y),y), \\
  q_{i,j}(x,f_i(x,y),y) &\equals y
\end{align*}
for all $i\neq j$, and $f_i(x,x) \equals x$ for all $i$. Note that these
identities force that all the functions are idempotent.

We claim that $\rel B_\kappa$ has polymorphisms satisfying these equations:
Fix $c\in U_\kappa$, put $f_i (x,y) = i$ for $x
\neq y$, $f_i(x,x) = x$, and define $p_{i,j}$ and $q_{i,j}$ by:
\begin{align*}
  p_{i,j} (x,y,z) &= \begin{cases}
    x & \text{ if $y = f_j(x,z)$, and } \\
    c & \text{ otherwise; }
  \end{cases} \\
  q_{i,j} (x,y,z) &= \begin{cases}
    z & \text{ if $y = f_i(x,z)$, and } \\
    c & \text{ otherwise. }
  \end{cases}
\end{align*}
Clearly, these operations satisfy the required identities. To prove that they
are also compatible with $\rel B_\kappa$, observe that any operation $t$ which
has a~coordinate $i$ such that \( t(x_1,\dots,x_n) \in U_\kappa \) whenever $x_i
\in U_\kappa$ is a~polymorphism of $\rel B_\kappa$.
The corresponding coordinates for our functions are: the first for $p$'s, the
last for $q$'s, and arbitrary for~$f$'s.

The last part is to prove that these identities are not satisfiable in any
non-trivial algebra of size strictly less than $\kappa$. This follows from the
same argument as Proposition~\ref{prop:modularity-large-algebras}. In fact,
these identities are a~stronger version of those used in the mentioned proof.
\end{proof}

\begin{corollary} Let $\lambda < \kappa$ be two infinite cardinals, $n\geq
2$, and let $\idclo A$ denote the idempotent reduct of a~clone $\clo A$. Then
  \begin{enumerate}
    \item[(i)] there is no clone homomorphisms from $\idclo C_\kappa$ to $\idclo
    C_\lambda$, and 
    \item[(ii)] there is no clone homomorphisms from $\idclo B_\kappa$ to
    $\idclo B_\lambda$.
  \end{enumerate}
\end{corollary}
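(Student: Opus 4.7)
Both items follow the pattern of Corollary~\ref{cor:no-strongest-non-modular-variety}: a clone homomorphism from $\idclo A_\kappa$ to $\idclo A_\lambda$ would induce an action of $\idclo A_\kappa$ on the underlying set of $\rel A_\lambda$, so the plan is to rule out such small actions by means of Lemma~\ref{lem:previous-proposition} (or an analogue of it).

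Part (ii) is immediate from Lemma~\ref{lem:previous-proposition}: the set $B_\lambda = \lambda$ has cardinality $\lambda$ with $1 < \lambda < \kappa$, which is exactly the case excluded by the lemma.

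For (i), the same strategy requires the analogue of Lemma~\ref{lem:previous-proposition} for $\idclo C_\kappa$, since $|C_\lambda| = |\lambda^n| = \lambda$ when $\lambda$ is infinite. I would prove this analogue by reusing the very same identities from the proof of Lemma~\ref{lem:previous-proposition}: the combinatorial argument forcing any non-trivial model to have cardinality at least $\kappa$ carries over verbatim, so the only task is to realise those identities by polymorphisms of $\rel C_\kappa$. The key observation is that each of the $\rel B_\kappa$-polymorphisms $f_i$, $p_{i,j}$, $q_{i,j}$ constructed in that proof has a~\emph{magic coordinate}, that is, an input position $r$ such that $t(x_1,\dots,x_m) \in U_\kappa$ whenever $x_r \in U_\kappa$: position $1$ for $f_i$ and $p_{i,j}$, and position $3$ for $q_{i,j}$. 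I then lift each such $m$-ary $t$ to an operation $T$ on $\kappa^n$ by applying $t$ componentwise in all $n$ coordinates.

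The main technical point, and the only step that requires any work, is verifying that these lifts are polymorphisms of $\rel C_\kappa$. Compatibility with the kernels $\alpha_i^{\rel C_\kappa}$ is automatic from the componentwise definition. For compatibility with $R_n^{\rel C_\kappa}$, given $m$ inputs from $R_n^{\rel C_\kappa}$ I would take $r$ to be the magic coordinate of $t$, pick the coordinate $i$ witnessing membership of the $r$-th input in $R_n^{\rel C_\kappa}$, and observe that the $i$-th component of the output equals $t$ evaluated on arguments whose $r$-th entry lies in $U_\kappa$ by the choice of $i$; the magic property then places this component in $U_\kappa$, hence the whole output in $R_n^{\rel C_\kappa}$. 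The identities then hold coordinatewise because they hold for the original $\rel B_\kappa$-polymorphisms, and the combinatorial argument of Lemma~\ref{lem:previous-proposition} furnishes $\kappa$ distinct values $f_i(a,b)$ for any pair $a \neq b$, contradicting any action on a~set of cardinality strictly between $1$ and $\kappa$.
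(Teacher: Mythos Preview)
Your argument is correct, and the core idea --- lifting the operations $f_i$, $p_{i,j}$, $q_{i,j}$ from $\kappa$ to $\kappa^n$ by acting componentwise --- is exactly the one the paper uses. The difference is in packaging. The paper observes that the componentwise action of \emph{any} $f\in\idclo B_\kappa$ on $\kappa^n$ preserves $R^{\rel C_\kappa}$, simply because $R^{\rel C_\kappa}$ is the very same relation $R_n^{\rel B_\kappa}$ that $f$ already preserves; this gives a clone homomorphism $\xi\colon \idclo B_\kappa \to \idclo C_\kappa$ in one line. A hypothetical homomorphism $\idclo C_\kappa \to \idclo C_\lambda$ would then compose with $\xi$ to give an action of $\idclo B_\kappa$ on a set of size $|C_\lambda|=\lambda<\kappa$, and Lemma~\ref{lem:previous-proposition} applies directly. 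Thus (i) is reduced to (ii) without re-running the combinatorial argument or re-checking compatibility operation by operation.

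Your route instead re-proves the analogue of the lemma for $\idclo C_\kappa$ explicitly. That works, but your magic-coordinate verification that the lifts preserve $R^{\rel C_\kappa}$ is doing by hand a special case of the one-line observation above: an $m$-ary operation on $\kappa$ preserves $R_n\subseteq\kappa^n$ as an $n$-ary relation if and only if its componentwise lift preserves $R_n$ as a unary relation on $\kappa^n$. Recognising this lets you replace the whole magic-coordinate paragraph by the single sentence ``the lifts lie in $\idclo C_\kappa$ because the originals lie in $\idclo B_\kappa$'', and at that point you have essentially reconstructed the paper's $\xi$.
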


\begin{proof}
  The item (ii) follows directly from the above lemma. To prove item (i),
  we first observe that there is a~clone homomorphism $\xi$ from $\idclo B_\kappa$ to
  $\idclo C_\kappa$: define $\xi(f)$ to be the component-wise action of $f$ on
  $\kappa^n$. Indeed, if $f\in \idclo B_\kappa$, $\xi(f)$ preserves $R^{\rel
  C_\kappa}$ since it preserves $R_n^{\rel B_\kappa}$. Now, if there was
  a~clone homomorphism from $\idclo C_\kappa$ to $\idclo C_\lambda$, we would
  get one from $\idclo B_\kappa$ to $\idclo C_\lambda$ by composing it with
  $\xi$ which contradicts Lemma~\ref{lem:previous-proposition}.
\end{proof}

By analogous proofs as \ref{cor:no-strongest-non-modular-variety} we obtain the
following.

\begin{corollary} \label{cor:no-strongest-variety-without-cube-term}
Fix $n\geq 2$.
\begin{enumerate}
  \item[(i)] The class of all interpretability classes of idempotent varieties
    that do not have an~$n$-cube term does not have a~largest element.
  \item[(ii)] The class of all interpretability classes of idempotent varieties
    that do not have a~cube term does not have a~largest element. \qed
\end{enumerate}
\end{corollary}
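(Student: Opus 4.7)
The plan is to mimic the proof of Corollary~\ref{cor:no-strongest-non-modular-variety} nearly verbatim, substituting the chain of pentagons $\rel P_\kappa$ by the chain of $n$-crosses $\rel C_\kappa$ for part~(i), and by the chain of cube term blockers $\rel B_\kappa$ for part~(ii). Each part is a proof by contradiction: assume that there is a largest interpretability class of idempotent varieties without an $n$-cube term (respectively, without a cube term), and let $\var V$ be a variety from that class.

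For part~(i), I would first combine Corollary~\ref{cor:crosses-and-blockers}(i) with Proposition~\ref{prop:crosses-of-arbitrary-size}(i) to obtain, for every large enough infinite $\kappa$, an algebra in $\var V$ compatible with $\rel C_\kappa$; Lemma~\ref{lem:2.1} then gives that $\var V$ is interpretable in the variety of actions of $\clo C_\kappa$. Fix such a $\kappa$ and pick any $\lambda > \kappa$. The variety of actions of the idempotent reduct $\idclo C_\lambda$ is an idempotent variety that contains (via its natural action) an algebra compatible with $\rel C_\lambda$, and is therefore free of any $n$-cube term by Lemma~\ref{lem:n-crosses-no-terms}. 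By the assumed maximality of $\var V$ it is interpretable in $\var V$, and hence in the variety of actions of $\clo C_\kappa$; Lemma~\ref{lem:2.1} then yields a clone homomorphism $\idclo C_\lambda \to \clo C_\kappa$. Since clone homomorphisms preserve the identity $f(x,\dots,x) \equals x$, every idempotent operation of $\idclo C_\lambda$ is mapped to an idempotent operation of $\clo C_\kappa$, so this homomorphism actually takes values in $\idclo C_\kappa$, contradicting the preceding corollary.

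Part~(ii) proceeds by an identical template, using Corollary~\ref{cor:crosses-and-blockers}(ii), Proposition~\ref{prop:cube-term-blockers-of-arbitrary-size}, and the elementary observation that any algebra compatible with a cube term blocker has no cube term at all (the cube term would have to send a corner of $\{a,b\}^k$ lying in the cross $R_k$ to the corner $(b,\dots,b)$ that is not in $R_k$). The contradiction then comes from the non-existence of a clone homomorphism $\idclo B_\lambda \to \idclo B_\kappa$ for $\lambda > \kappa$ asserted in the preceding corollary.

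The only genuine piece of bookkeeping, and the place where I expect the sole sticking point, is justifying the step that converts the clone homomorphism landing in $\clo C_\kappa$ (respectively $\clo B_\kappa$) into one landing in the idempotent reduct; but this is forced automatically by the fact that the source clone consists entirely of idempotent operations. Once this is noted, the rest of the argument is a mechanical translation of the pentagon case.
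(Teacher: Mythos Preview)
Your proposal is correct and is precisely the analogous argument the paper has in mind: the paper gives no separate proof, merely pointing to Corollary~\ref{cor:no-strongest-non-modular-variety}, and you have faithfully transcribed that template, substituting $\rel C_\kappa$ and $\rel B_\kappa$ for $\rel P_\kappa$ and invoking the corresponding results (Corollary~\ref{cor:crosses-and-blockers}, Proposition~\ref{prop:crosses-of-arbitrary-size}, Lemma~\ref{lem:n-crosses-no-terms}, and the preceding corollary on the strict chain). Your explicit remark that the clone homomorphism into $\clo C_\kappa$ (resp.\ $\clo B_\kappa$) automatically lands in the idempotent reduct is a point the paper leaves implicit even in the pentagon case, so if anything your write-up is slightly more careful.
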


\subsection{Remark(s)} The presented proof of Theorem \ref{thm:1.4} was
developed independently of Kearnes and Szendrei after their announcement of
Theorem~\ref{thm:kearnes.szendrei}, and before the final manuscript
\cite{kearnes.szendrei15} was available.
 
\subsection*{Acknowledgements}

The results of the present paper are a~part of the author's Doctoral Thesis. The
author would like to express his gratitude to his advisor, Libor Barto, for his
supervision of this work.

\newcommand{\etalchar}[1]{$^{#1}$}

\end{document}